\newcommand{\patP}{crosshatch}
\newcommand{\patPmezzi}{Hlines}
\newcommand{\patPquarti}{Dlines}
\newcommand{\patUnoMenoP}{north east lines}
\newcommand{\patUnoMenoPmezzi}{crosshatch dots}
\newcommand{\patUnoMenoPquarti}{Vlines}
\newcommand{\patUno}{}
\newcommand{\skeleton}{
\draw[help lines,dashed] (0,0) -- (3.1,3.1);
\draw[help lines,<->] (2.02,0.1) -- (3,0.1)
      node[pos=0.5, above] {$r$};
\draw[very thick] (0.1,-0.03) -- (-0.03,-0.03) -- 
             (-0.03,3.13) -- (0.1,3.13);
\draw[very thick] (3.1-0.1,-0.03) -- (3.1+0.03,-0.03) -- 
             (3.1+0.03,3.13) -- (3.1-0.1,3.13);
\foreach \k in {1,2,3}
{
  \draw[help lines,xshift=\k cm] (0,0) -- (0,3.1);
  \draw[help lines,yshift=-\k cm] (0,0) -- (3.1,0);
}
}
\newcommand{\skeletonB}{
\draw[help lines,dashed] (0,0) -- (2.1,2.1);
\draw[help lines,<->] (0.02,0.1) -- (1,0.1)
      node[pos=0.5, above] {$r$};
\draw[very thick] (0.1,-0.03) -- (-0.03,-0.03) -- 
             (-0.03,3.13) -- (0.1,3.13);
\draw[very thick] (2.1-0.1,-0.03) -- (2.1+0.03,-0.03) -- 
             (2.1+0.03,3.13) -- (2.1-0.1,3.13);
\foreach \k in {1,2}
{
  \draw[help lines,xshift=\k cm] (0,0) -- (0,3.1);
}
\foreach \k in {1,2,3}
{
  \draw[help lines,yshift=-\k cm] (0,0) -- (2.1,0);
}
}
\newcommand{\skeletonC}{
\draw[help lines,dashed] (0,0) -- (2.1,2.1);
\draw[help lines,<->] (1.02,0.1) -- (2,0.1)
      node[pos=0.5, above] {$r$};
\draw[very thick] (0.1,-0.03) -- (-0.03,-0.03) -- 
             (-0.03,2.13) -- (0.1,2.13);
\draw[very thick] (3.1-0.1,-0.03) -- (3.1+0.03,-0.03) -- 
             (3.1+0.03,2.13) -- (3.1-0.1,2.13);
\foreach \k in {1,2,3}
{
  \draw[help lines,xshift=\k cm] (0,0) -- (0,2.1);
}
\foreach \k in {1,2}
{
  \draw[help lines,yshift=-\k cm] (0,0) -- (3.1,0);
}
}
\newcommand{\p}{\mathsf{p}}
\newcommand{\q}{\mathsf{q}}
\newcommand{\vm}{V_{\max}}
\newcommand{\vmp}{V_{\max}^{\p}}
\newcommand{\vmq}{V_{\max}^{\q}}
\newcommand{\vb}{v_*}
\newcommand{\va}{v^*}
\newcommand{\fp}{f^{\p}}
\newcommand{\fq}{f^{\q}}
\newcommand{\fbp}{f_*^{\p}}
\newcommand{\fap}{f^{\p*}}
\newcommand{\fbq}{f_*^{\q}}
\newcommand{\faq}{f^{\q*}}
\newcommand{\fu}{f^{1}}
\newcommand{\fd}{f^{2}}
\newcommand{\fub}{\f^{1}}
\newcommand{\fdb}{\f^{2}}
\newcommand{\ru}{\rho^{1}}
\newcommand{\rd}{\rho^{2}}
\newcommand{\nd}{n^{2}}
\newcommand{\Vp}{\mathcal{V}^{\p}}
\newcommand{\Vq}{\mathcal{V}^{\q}}
\newcommand{\Qp}{Q^{\p}}
\newcommand{\Qpp}{Q^{\p\p}}
\newcommand{\Qpq}{Q^{\p\q}}
\newcommand{\Ap}{A^{\p}(\vb\to v|\va;s)}
\newcommand{\Apq}{A^{\p\q}(\vb\to v|\va;s)}
\newcommand{\Dv}{\Delta v}
\newcommand{\dv}{\delta v}
\newcommand{\np}{n^\p}
\newcommand{\nq}{n^\q}
\newcommand{\A}{\mathbf{A}}
\newcommand{\B}{\mathbf{B}}
\newcommand{\uu}{\mathbf{u}}
\newcommand{\vv}{\mathbf{v}}
\newcommand{\BB}{\mathcal{B}}
\newcommand{\ds}{\displaystyle}
\newcommand{\scr}{s_{\mathsf{cr}}}
\newcommand{\abs}[1]{\left\vert#1\right\vert}
\newcommand{\unit}[1]{\textup{#1}}
\newcommand{\f}{\mathbf{f}}
\newcommand{\norm}[1]{\left\lVert#1\right\rVert}
\newcommand{\Cf}{C_{\sf f}}
\newcommand{\Cs}{C_{\sf s}}
\newcounter{the}
\theoremstyle{remark}
\newtheorem{theorem}[the]{Theorem}
\newtheorem{remark}[the]{Remark}
\title{Analysis of a multi-population kinetic model for traffic flow}
\author{Gabriella Puppo \\
		{\small\it Department of Science and High Technology} \\
		{\small\it University of Insubria} \\
		{\small\it Via Valleggio 11, 22100 Como, Italy} \\[5mm]
	Matteo Semplice \\
		{\small\it Department of Mathematics ``G. Peano''} \\
		{\small\it University of Turin} \\
		{\small\it Via Carlo Alberto 10, 10123 Turin, Italy} \\[5mm]
	Andrea Tosin \\
		{\small\it Istituto per le Applicazioni del Calcolo ``M. Picone''} \\
		{\small\it Consiglio Nazionale delle Ricerche} \\
		{\small\it Via dei Taurini 19, 00185 Rome, Italy} \\[5mm]
	Giuseppe Visconti \\
		{\small\it Department of Science and High Technology} \\
		{\small\it University of Insubria} \\
		{\small\it Via Valleggio 11, 22100 Como, Italy}
	   }
\date{}
\begin{document}

\pgfdeclarepatternformonly{Hlines}
  {\pgfpointorigin}
  {\pgfpoint{.03cm}{.03cm}}
  {\pgfpoint{.03cm}{.03cm}}
  {
  \pgfline{\pgfpointorigin}{\pgfpoint{.03cm}{0cm}}
  \pgfusepath{stroke}
 }

\pgfdeclarepatternformonly{Vlines}
  {\pgfpointorigin}
  {\pgfpoint{.03cm}{.03cm}}
  {\pgfpoint{.03cm}{.03cm}}
  {
  \pgfline{\pgfpointorigin}{\pgfpoint{0cm}{.03cm}}
  \pgfusepath{stroke}
 }

\pgfdeclarepatternformonly{Dlines}
  {\pgfpointorigin}
  {\pgfpoint{.05cm}{.05cm}}
  {\pgfpoint{.05cm}{.05cm}}
  {
  \pgfline{\pgfpointorigin}{\pgfpoint{.05cm}{.05cm}}
  \pgfusepath{stroke}
 }
 
\maketitle

\begin{abstract}
In this work we extend a recent kinetic traffic model~\cite{PgSmTaVg2} to the case of more than one class of vehicles, each of which is characterized by few different microscopic features. We consider a Boltzmann-like framework with only binary interactions, which take place among vehicles belonging to the various classes. Our approach differs from the multi-population kinetic model proposed in~\cite{PgSmTaVg} because here we assume continuous velocity spaces and we introduce a parameter describing the physical velocity jump performed by a vehicle that increases its speed after an interaction. The model is discretized in order to investigate numerically the structure of the resulting fundamental diagrams and the system of equations is analyzed by studying well posedness. Moreover, we compute the equilibria of the discretized model and we show that the exact asymptotic kinetic distributions can be obtained with a small number of velocities in the grid. Finally, we introduce a new probability law in order to attenuate the sharp capacity drop occurring in the diagrams of traffic.
\end{abstract}

\paragraph{Keywords} Multispecies models, Boltzmann-like kinetic models, discrete velocity models, multivalued diagrams, two-phases diagrams

\paragraph{MSC} 35Q20, 65Z05, 90B20

\section{Introduction}
\label{sec:introduction}

In~\cite{PgSmTaVg} we introduced a Boltzmann-like kinetic model for traffic flow, which draws inspiration from the ideas presented in~\cite{benzoni2003EJAM} for macroscopic models, in order to take into account the heterogeneous composition of the flow of vehicles on the road. This aspect, which is rather neglected in the literature, is important to obtain a richer description of the macroscopic behavior of traffic flow. In fact, we showed that the model is able to recover the whole structure of the diagrams relating the macroscopic flux and speed to the vehicle density in homogeneous space conditions and which represents a basic tool to study traffic problems.

In particular, the multi-population model~\cite{PgSmTaVg} provides multivalued diagrams reproducing two-phases of traffic which are computed from moments of equilibrium solutions of the kinetic equations. As a matter of fact, any kinetic model does not require an a priori link between the local density of traffic and the macroscopic speed, as it is done in standard macroscopic traffic models, see e.g. the classical works~\cite{lighthill1955PRSL,richards1956OR}, or the reviews~\cite{piccoli2009ENCYCLOPEDIA,Rosini}.
The diagrams provided by the model are characterized by a sharp phase transition between the free and the congested phases of traffic, with a capacity drop across the phase transition. Moreover, at low densities, there is a small dispersion of the flux values, which increase nearly linearly with respect to the density. Instead, at high densities, the model reproduces naturally the scattered data usually observed in the experimental diagrams of traffic by taking into account the macroscopic variability of the flux and mean speed at equilibrium due to the heterogeneous composition of the traffic ``mixture''. This result is reached without invoking further elements of microscopic randomness of the system: for example in~\cite{FermoTosin14} the explanation for the phase transition appeals to the stochasticity of the drivers' behavior and to the consequent variability of the microscopic speeds at equilibrium.  In the literature, a variety of multiphase models, also at the macroscopic and at the microscopic scales, have been introduced in order to reflect the features of traffic, for a review see~\cite{klarReview} and references therein.

This work can be seen as a natural sequel of~\cite{PgSmTaVg} because we revisit and refine the model by extending the construction introduced in~\cite{PgSmTaVg2} to the case of traffic mixtures. We will consider more than one class of vehicles, characterized by a few parameters accounting for the microscopic differences which allow one to distinguish two (or more) types of vehicles. Here, these parameters will be the typical length and the maximum speed, and we will introduce a kinetic distribution function for each class of vehicles. We stress that the heterogeneity of traffic is not only described by introducing two or more classes  with different physical features, but also by considering two or more types of drivers with different behavioral attributes according to the maximum velocity they intend to keep, as in~\cite{benzoni2003EJAM,LebacqueGSOM,MendezVelasco13} for macroscopic models.

However, this approach differs from the model proposed in~\cite{PgSmTaVg} because the latter is based on a lattice of admissible microscopic speeds and the output of an interaction depends on the number of velocities chosen in the lattice. Here, instead, using the framework discussed in~\cite{PgSmTaVg2}, we will consider continuous and bounded velocity spaces and we will introduce a parameter proportional to the actual acceleration of a vehicle in order to describe the physical {\em velocity jump} performed by vehicles when they increase their speed as a result of an interaction. Clearly, this parameter may depend on the mechanical characteristics of a vehicle, but we will suppose that it is fixed. This choice preserves the quantized structure of the asymptotic functions, already observed in~\cite{PgSmTaVg2} for the single population model. In fact, Theorem~\ref{th:equilibria} in Section~\ref{sec:analysis} shows that the asymptotic kinetic distribution approaches a combination of delta functions, centered in the velocities which are proportional to the fixed parameter.

The purpose of the kinetic approach is to obtain an aggregate representation of the distribution of vehicles on the road, thus it is not based on single particles as the microscopic ``follow the leader'' models~\cite{ZhangMultiphase}, and it allows one to recover the macroscopic behavior of car flow by means of a detailed modeling of the microscopic interactions. As in any Boltzmann-like kinetic model, here the relaxation of the distribution functions in time is due to the collision operator, which describes the acceleration and the slowing down interactions among vehicles. Notice that in the first kinetic traffic models~\cite{Prigogine61,PrigogineHerman} the collision term accounted only for slowing down interactions treating the acceleration by means of a relaxation term, e.g. depending on the desired speed of the drivers as in~\cite{paveri1975TR}. Instead, later, models with a kinetic description also for the acceleration were developed. In the present framework we will consider only binary interactions and, since there is more than one population of vehicles, we will suppose that the interactions take place also among vehicles belonging to different classes. Therefore, as in standard kinetic models for gas mixtures~\cite{andries2001REPORT,brull2012EJMB,groppi2004PF,hamel1965PF}, the collision term will be split in the sum of many collision terms. Each one will be characterized by a probability density which allows one to assign a post-interaction speed in a non-deterministic way. The probability of gaining or loosing a speed will depend on the local mean traffic conditions. We suppose also that the microscopic interaction rules, prescribing a post-interaction speed, do not depend on the type of vehicle which is considered.

For references on other kinetic approaches studied in the literature, see e.g. \cite{HertyIllner08, hertyillner09} where both the acceleration and the slowing down interactions are modeled with a Vlasov-type relaxation towards a desired speed. Kinetic theory is also used to model multilane traffic flow~\cite{HelbingGreiner,klar1999SIAP-1,klar1999SIAP-2,LoSchiavo2002,BonzaniC08}, flows on networks~\cite{fermotosin2015}, inhomogeneous space problems with non local interactions~\cite{klar1997Enskog}, control problems~\cite{herty2007M2AS} and safety aspects in vehicular traffic~\cite{FregugliaTosin15}. For a review on kinetic traffic models, see~\cite{klar2004BOOKCH}. 

The paper is organized as follows. In Section~\ref{sec:multimodel} we introduce the general framework of the continuous-velocity multi-population model, then we discuss the modeling of the probability density and we prove an indifferentiability principle. In Section~\ref{sec:delta-multimodel} we discretize the model in order to find the asymptotic behavior of the distribution functions. Then we analyze the resulting system of ordinary differential equations by studying well posedness and the asymptotic kinetic distribution for the case of two populations. In Section~\ref{sec:funddiag} we show the macroscopic diagrams of traffic provided by the model with three classes of vehicles. We also discuss the impact that different probabilities of achieving the maximum speed in an interaction have on the sharp capacity drop observed at the transition between free and congested traffic flow. In Section~\ref{sec:conclusions} we propose some final comments and perspectives. Finally, we end in Appendix~\ref{app:discrete-terms} with the explicit computation of the terms resulting from the discretization of the collision operators and in Appendix~\ref{app:equilibria} with the analytical expression of the equilibria for the case of two populations.

\section{A multi-population kinetic model}
\label{sec:multimodel}
In this section we present the general form of a kinetic model for vehicular traffic with a new structure accounting for the heterogeneous composition of the traffic flow on the road. Next, we derive a simplified model based on particular choices made on the microscopic interaction rules. This model is a generalization of~\cite{PgSmTaVg2} to the case of a multi-population framework. Therefore, unlike~\cite{PgSmTaVg}, in this work we suppose that each class of vehicles ({\em population}) admits a continuous space of admissible velocities and we introduce a parameter describing the physical acceleration of each vehicle. Our approach differs from standard kinetic models in that we consider more than one kinetic distribution function. Each one refers to a class of vehicles characterized by precise physical features, in this case the maximum speed and the typical length of a vehicle.

We will focus only on the space homogeneous case, because we want to investigate the structure of the collision term, and of the resulting equilibrium distributions which allow one to obtain the fundamental diagrams of traffic. From now on, we adopt a compact notation, which makes use of an index $\p$, to label various quantities referred to the different classes of vehicles. Thus, let
\[
	\fp=\fp(t,v):\mathbb{R}^+ \times \Vp  \to \mathbb{R}^+
\]
be the {\em kinetic distribution function} of the $\p$-th class of vehicles, then $\fp(t,v)dv$ gives the number of vehicles belonging to the $\p$-class with velocity in $[v,v+dv]$ at time $t$. The space $\Vp=[0,\vmp]$ is the domain of the microscopic speeds related to the $\p$-class, where $\vmp$ is the maximum speed which can be reached by the $\p$-vehicles. It may depend on the mechanical characteristics of the vehicles, on imposed speed limits or on the type of drivers, according to the maximum velocity they intend to keep in free road conditions. Thus, the different maximum speeds allow one to model a first microscopic feature which identifies a class of vehicles. Another difference is introduced by considering the typical length $l^\p$ of vehicles which will be used later to define the concept of the total space occupied on the road.

As usual, macroscopic quantities  are obtained as moments of the distribution functions $\fp$ with respect to the velocity $v$: 
\begin{equation}
	\rho^\p(t)=\int_{\Vp} \fp(t,v)dv,\quad q^\p(t)=\int_{\Vp} v\fp(t,v)dv, \quad u^\p(t)=\frac{q^\p(t)}{\rho^\p(t)}
	\label{eq:macro-var}
\end{equation}
where $\rho^\p$ is the {\em density}, i.e. the number of vehicles of the $\p$-class per unit length (typically, kilometers), $q^\p$ and $u^\p$ are the {\em macroscopic flux} of vehicles and the {\em mean speed} of the $\p$-th class, respectively.

Here we consider a Boltzmann-type kinetic model for vehicular traffic, in which the relaxation to equilibrium is due to binary interactions. In the homogeneous case, we can model the evolution of the $\fp$'s in time by means of the following system of equations: 
\begin{equation}
	\partial_t \fp(t,v)=\Qp\left[ \fp , \left( \fp,\fq \right) \right] (t,v), \quad \forall\;\p
	\label{eq:model1}
\end{equation}
where $\Qp\left[ \fp , \left( \fp,\fq \right) \right] (t,v)$ is the {\em collision operator} which accounts for the change of $\fp$ in time due to the microscopic interactions among vehicles. Clearly, a multi-population model has to consider also the interactions taking place between $\p$- and $\q$-vehicles, where $\q$ represents all classes of vehicles which are not $\p$. For this reason, and following an approach frequently used for mixtures of gases in kinetic theory, see e.g.~\cite{brull2012EJMB,groppi2004PF,hamel1965PF}, $\Qp$ can be naturally thought of as a sum of two or more collision operators, one describing the interactions among vehicles belonging to the same class ({\em self-interactions}) and the other ones describing the interactions among vehicles belonging to different classes ({\em cross-interactions}), so that
\[
	\Qp\left[ \fp , \left( \fp,\fq \right) \right] (t,v) = \underbrace{\Qpp\left[ \fp , \fp \right] (t,v)}_{\text{self-interactions}} + \sum_{\q\in\neg\p} \underbrace{\Qpq\left[ \fp , \fq \right] (t,v)}_{\text{cross-interactions}}
\]
For mass conservation to hold the right-hand side of the above expression has to vanish when it is integrated over the space of admissible speeds of the $\p$-th class, and this is verified e.g. if we assume that the collision terms satisfy
\[
	\int_{\Vp} \Qpp[\fp,\fp](t,v)dv=\int_{\Vp} \Qpq[\fp,\fq](t,v)dv=0,
\]
for all $\q\in\neg\p$. In fact, this ensures that, in the space homogeneous case, the density remains constant:
\[
	\frac{d}{dt}\rho^{\p}(t)=\partial_t\int_{\Vp} \fp(t,v)dv=\int_{\Vp}\Qp\left[ \fp , \left( \fp,\fq \right) \right] (t,v) dv=0.
\]

Following the same logic underlying the construction of a classical Boltzmann-like kinetic model, each collision operator is written as a balance of a gain ($G^{\p\p}$ or $G^{\p\q}$) and a loss term that model statistically the interactions which lead to get or to loose the speed $v\in\Vp$:
\begin{subequations}
	\label{eq:collision}
	\begin{align}
		\Qpp[\fp,\fp](t,v)=& \underbrace{\int_{\Vp} \int_{\Vp} \eta^{\p}(\vb,\va) \Ap \fp(t,\vb) \fp(t,\va) d\va d\vb}_{G^{\p\p}[\fp,\fp](t,v)} \label{eq:self-collision}\\
		 & - \fp(t,v)\int_{\Vp} \eta^{\p}(\vb,\va) \fp(t,\va) d\va,  \nonumber \\
		 \Qpq[\fp,\fq](t,v)=&\underbrace{\int_{\Vp} \int_{\Vq} \eta^{\p\q}(\vb,\va) \Apq \fp(t,\vb) \fq(t,\va) d\va d\vb}_{G^{\p\q}[\fp,\fq](t,v)} \label{eq:cross-collision}\\
		 & - \fp(t,v)\int_{\Vq} \eta^{\p\q}(\vb,\va) \fq(t,\va) d\va. \nonumber
	\end{align}
\end{subequations}

Throughout the paper, we will denote by $\vb\in\Vp$ the pre-interaction velocity of the $\p$-vehicle which is likely to change speed after an interaction. Conversely, $\va\in\Vp$ or $\va\in\Vq$ identifies the velocity of other $\p$- or $\q$-vehicles which induce the gain or loss of the speed $v\in\Vp$. In order to shorten formulas, we will use the following traditional shorthand $\fp(t,\vb)=\fbp$, $\fp(t,\va)=\fap$, $\forall \; \p$, and similarly for $\fbq$, $\faq$, $\forall \; \q\in\neg\p$.

In~\eqref{eq:collision}, $\eta^\p(\vb,\va)$ and $\eta^{\p\q}(\vb,\va)$ are the {\em interaction rates} which model the frequency of self- and cross-interactions respectively. Although they may depend on the relative speed of the interacting vehicles, as in~\cite{coscia2007IJNM,KlarWegener96} for a single population case, in~\cite{PgSmTaVg} we found that a constant interaction rate is already sufficient to account for many aspects of the complexity of traffic. In the space homogeneous case, the interaction rates affect only the relaxation time towards equilibrium. Thus, in this paper we will set $\eta^\p(\vb,\va)=\eta^\p$ and $\eta^{\p\q}(\vb,\va)=\eta^{\p\q}$.

Finally, $\Ap$ and $\Apq$ are the probability densities of gaining the speed $v\in\Vp$ in the case of self- and cross-interactions, respectively. More precisely, $\Ap$ ($\Apq$, resp.) gives the probability that a $\p$-vehicle modifies its pre-interaction speed $\vb\in\Vp$ in the speed $v\in\Vp$ when it interacts with a $\p$-vehicle ($\q$-vehicle, resp.) traveling at the speed $\va\in\Vp$ ($\va\in\Vq$, resp.).

We will suppose that these probabilities depend also on the macroscopic traffic conditions (local road congestion) through the {\em fraction of occupied space} on the road:
\begin{equation}
	\label{eq:s}
	0 \leq s=\sum_{\p} l^\p \rho^\p \leq 1.
\end{equation}
Notice that $s$ was already introduced in~\cite{benzoni2003EJAM} for a multi-population macroscopic model and it was also used in~\cite{PgSmTaVg} for a two-population kinetic model based on a discrete-velocity framework. The quantity $\rho^\p$ appearing in~\eqref{eq:s} is defined in~\eqref{eq:macro-var}. We will assume that $\rho^\p\in[0,\rho_{\max}^\p]$ where $\rho_{\max}^\p$ is the maximum density of $\p$-vehicles chosen as $\frac{1}{l^\p}$, i.e. as the maximum number of vehicles per unit length in bumper-to-bumper conditions when $\rho^\q=0$, $\forall\;\q\in\neg\p$. Therefore, $s$ can be rewritten as
\[
	0 \leq s=\sum_{\p} \frac{\rho^\p}{\rho_{\max}^\p} \leq 1.
\]
From the last expression it is clear that the parameter $s$ generalizes the term $\frac{\rho}{\rho_{\max}}$ appearing in the case of single population models, see~\cite{hertyillner09,KlarWegener96,PrigogineHerman}.

Since $\Ap$ and $\Apq$ are probability densities, they fulfill the following properties:
\begin{equation}\label{eq:Aprop}
\begin{aligned}
	\Ap \geq 0,	\quad \int_{\Vp} \Ap dv=1, \quad\text{for } \vb,\va,v\in \Vp,\;s\in[0,1]\\
	\Apq \geq 0,	\quad \int_{\Vp} \Apq dv=1, \quad\text{for } \vb,v\in \Vp,\;\va\in\Vq,\;s\in[0,1].
\end{aligned}
\end{equation}

\begin{remark} \label{rem:consmass}
All transition probability densities $\Ap$ and $\Apq$ satisfying properties~\eqref{eq:Aprop} guarantee mass conservation. In fact, by integrating over the velocity space $\Vp$ we obtain
\[
	\int_{\Vp} \Qpp[\fp,\fp](t,v) dv = \int_{\Vp}\int_{\Vp}  \fbp \fap \left(\int_{\Vp} \Ap dv \right) d\vb d\va - \int_{\Vp} f^\p dv \int_{\Vp} \fap d\va = 0.
\]
Analogously for the cross-interaction operators we have $\int_{\Vp} \Qpq[\fp,\fq](t,v) dv = 0$, for all $\q\in\neg\p$.
\end{remark}

\begin{remark} \label{rem:bounded-s}
Since the mass of each population is conserved, also $s$ given in~\eqref{eq:s} is conserved. In particular, it satisfies the prescribed bounds if the $\fp$'s are properly chosen at the initial time.
\end{remark}

\subsection{Choice of the probability densities}
\label{sec:modeling}
As in any Boltzmann-like kinetic traffic model, the introduction of a probability density allows one to assign a post-interaction speed in a non-deterministic way, consistently with the intrinsic stochasticity of the drivers' behavior. Therefore, the construction of $\Ap$ and $\Apq$, $\forall\;\p,\q\in\neg\p$, is at the core of the model we propose. They are obtained with a very small set of rules which are similar to those given in~\cite{PgSmTaVg2} for a single population model.

We will suppose that all types of vehicles react in the same way to the parameter $s$, accounting for the state of congestion of the road, and to all field classes of vehicles. Clearly, it would also be possible to consider different reactive behaviors for different classes of vehicles. However, this choice is coherent with the experience and, as we showed in~\cite{PgSmTaVg} for the heterogeneous discrete-velocity model, and as we will see later in Section~\ref{sec:funddiag}, this simpler choice results in a realistic macroscopic behavior.

\begin{itemize}
\item If $\vb\leq\va$, i.e. the candidate vehicle is slower than the leading vehicle, the post-interaction rules are:
\begin{description}
\item[Do nothing:] the candidate vehicle keeps its pre-interaction speed, thus $v=\vb$ with probability $1-P_1$;
\item[Accelerate:] the candidate vehicle accelerates to a velocity $v>\vb$ with probability $P_1$.
\end{description}
\item If $\vb>\va$, i.e. the candidate vehicle is faster than the leading vehicle, the post-interaction rules are:
\begin{description}
\item[Do nothing:] the candidate vehicle keeps its pre-interaction velocity, i.e. $v=\vb$, with probability $P_2$, thereby overtaking the leading vehicle;
\item[Brake:] the candidate vehicle decelerates to the velocity $v=\va$ with probability $1-P_2$, thereby queuing up and following the leading vehicle.
\end{description}
\end{itemize}

From the previous rules, we observe that the probability densities will contain terms that will be proportional to a Dirac delta function at $v=\vb$, due to interactions in which the pre-interaction microscopic speed is preserved (the two ``Do nothing'' alternatives). Note that these are ``false gains'' for the distributions $\fp$, because the number of vehicles of the $\p$-class with speed $v$ is not altered by these interactions. 

The speed after braking is assigned as 
proposed  in~\cite{Prigogine61} and used also  in~\cite{FermoTosin14,PgSmTaVg} in the context of a discrete velocity model. Namely, we suppose that if a vehicle brakes, interacting with a slower vehicle, it slows down to the speed $\va$ of the leading vehicle, thus, after the interaction, $v=\va$, and the field vehicle will remain behind the leading one.

For the post-interaction speed due to acceleration we assume that the output velocity $v$ is obtained by accelerating instantaneously from $\vb$ to $\vb+\Dv^\p$, unless the resulting speed is larger than $\vmp$, namely the new speed is $\min\left\{\vb+\Dv^\p,\vmp\right\}$. This choice corresponds to the case of the quantized acceleration (or $\delta$ model) introduced in~\cite{PgSmTaVg2} for a single population model. Clearly, other choices are possible, in particular the case of a uniformly distributed acceleration. However, we proved that although a model with such an acceleration is more refined, at equilibrium the essential information is caught by the simpler $\delta$ model. For this reason, here we will not investigate other models.

Considering all possible outcomes, the resulting probability distribution accounting for self-interactions is
\begin{equation}
	\Ap=
	\begin{cases}
		(1-P_1(s)) \, \delta_{\vb}(v) + P_1(s) \, \delta_{\min\left\{\vb+\Dv^\p,\vmp\right\}}(v), &\text{if\; $\vb\leq\va$}\\
		(1-P_2(s)) \, \delta_{\va}(v) + P_2(s) \, \delta_{\vb}(v), &\text{if\; $\vb>\va$}.
	\end{cases}
	\label{eq:Adelta}
\end{equation}

\begin{figure}
\centering
\begin{tikzpicture}
\draw [<->] (0,3.5) -- (0,0) -- (3.5,0);
\draw (2.5,0) -- (2.5,2.5) -- (0,2.5);
\draw[dashed] (0,0) -- (2.5,2.5);
\node at (0.8,1.8) {$\vb \leq \va$};
\node at (1.8,0.7) {$\vb > \va$};
\node [below] at (1.25,0) {$\Vp$};
\node [left] at (0,1.25) {$\Vp$};
\node[left] at (0,2.5) {$\vmp$};
\node[below] at (2.5,0) {$\vmp$};
\end{tikzpicture}
\begin{tikzpicture}
\draw [<->] (0,3.5) -- (0,0) -- (3.5,0);
\draw (2.5,0) -- (2.5,2) -- (0,2);
\draw[dashed] (0,0) -- (2,2);
\draw[dotted] (2,0) -- (2,2);
\node at (0.8,1.6) {$\vb \leq \va$};
\node at (1.8,0.7) {$\vb > \va$};
\node [below] at (1.25,0) {$\Vp$};
\node [left] at (0,1) {$\Vq$};
\node[left] at (0,2) {$\vmq$};
\node[below] at (2.5,0) {$\vmp$};
\end{tikzpicture}
\begin{tikzpicture}
\draw [<->] (0,3.5) -- (0,0) -- (3.5,0);
\draw (2.5,0) -- (2.5,3) -- (0,3);
\draw[dashed] (0,0) -- (2.5,2.5);
\draw[dotted] (0,2.5) -- (2.5,2.5);
\node at (0.8,1.8) {$\vb \leq \va$};
\node at (1.8,0.7) {$\vb > \va$};
\node [below] at (1.25,0) {$\Vp$};
\node [left] at (0,1.5) {$\Vq$};
\node[left] at (0,3) {$\vmq$};
\node[below] at (2.5,0) {$\vmp$};
\end{tikzpicture}
\caption{The domain of the probability density $\Ap$ (left), the domains of the probability densities $\Apq$ in the case $\Vp\supset\Vq$ (center) and $\Vp\subset\Vq$ (right).\label{fig:domains}}
\end{figure}

Since we have assumed that all classes of vehicles react in the same way to the parameter $s$ and to different interacting populations, the probability densities describing the cross-interactions differ from $\Ap$ only in their domain. In fact, $\Ap$ is defined for $(\vb,\va)\in\Vp\times\Vp$ and $v\in\Vp$, while $\Apq$ for $(\vb,\va)\in\Vp\times\Vq$ and $v\in\Vp$, see Figure~\ref{fig:domains}.

Note that the modeling~\eqref{eq:Adelta} of $\Ap$ and $\Apq$, $\forall\;\p,\q\in\neg\p$, may seem as a continuous extension of the multi-population model~\cite{PgSmTaVg} which was based on a discrete velocity space. However, in~\cite{PgSmTaVg} the {\em velocity jump} $\Dv^\p$ is chosen as the distance between two adjacent discrete velocities, thus $\Dv^\p$ depends on the number of elements in the speed lattice. In this work, $\Dv^\p$ is a {\em physical} parameter that represents the ability of a class of vehicles to change its pre-interaction speed $\vb$. With this choice, $\Dv^\p$ does not depend on the discretization of the velocity space $\Vp$ and the maximum acceleration is bounded, as in~\cite{Lebacque03} and~\cite{PgSmTaVg2}.  In contrast, deceleration can be larger than $\Dv^\p$, and this fact reflects the idea that drivers 
tend to brake immediately if the traffic becomes more congested, while they react more slowly when they accelerate (see the concept of {\em traffic hysteresis} in~\cite{ZhangMultiphase} and references therein).

In the following, the probabilities $P_1$ and $P_2$ are taken as $P=P_1=P_2$ and $P$ is a function of the fraction of occupied space $s$ only. In general $P$ should be a decreasing function of $s$, see also \cite{HertyIllner08} or \cite{Rosini}.
For instance in~\cite{PgSmTaVg} we have taken 
\begin{equation}\label{eq:gamma-law}
P=1- s^{\gamma}, \quad \gamma\in (0,1).
\end{equation}
In more sophisticated models, one may also choose $P$ as a function of the relative velocity of the interacting vehicles, but we will not explore this possibility in the present work.
 
\begin{remark} \label{rem:acceleration}
In~\cite{KlarWegener96} Klar and Wegener assume that the velocity after an acceleration is uniformly distributed over a range of speeds between $\vb$ and $\vb+\alpha(\vm-\vb)$, where $\alpha$ is supposed to depend on the local density. Thus, they suppose that the output speed resulting from the acceleration rule depends on the free space on the road, in other words they consider $\Dv^\p$ as function of $s$. Instead, here $\Dv^\p$ is fixed while $P$ is function of $s$, so that when the road becomes congested the probability of accelerating decreases.
\end{remark}

\section{Analysis of the model}
\label{sec:delta-multimodel}

In this section, first we rewrite explicitly the model using the expression~\eqref{eq:Adelta} for $\Ap$ and $\Apq$. Next, we discretize it in order to analyze later numerically the asymptotic traffic behavior, see Section~\ref{sec:funddiag}. Furthermore, we study the well-posedness (existence, uniqueness, and continuous dependence of the solution on initial data) of the discretized model and we characterize explicitly the asymptotic distributions $(\fp)^\infty$ of the discrete-velocity model.

The gain term of the collision operator~\eqref{eq:self-collision} describing self-interactions can be easily rewritten in the following way by distinguishing the cases $\vb\leq\va$ and $\vb>\va$:

\begin{align*}
	G^{\p\p}[\fp,\fp](t,v)=&\eta^\p \int_0^{\vmp}\int_{\vb}^{\vmp} \left[(1-P)\delta_{\vb}(v)+P\delta_{\min\left\{\vb+\Dv^\p,\vmp\right\}}(v)\right]\fbp\fap d\va d\vb \\
    &+\eta^\p \int_0^{\vmp}\int_0^{\vb}\left[(1-P)\delta_{\va}(v)+P\delta_{\vb}(v)\right]\fbp\fap d\va d\vb
\end{align*}
Observe that the Dirac delta function at $v=\min\left\{\vb+\Dv^\p,\vmp\right\}$ can be split as
\[
	\delta_{\min\left\{\vb+\Dv^\p,\vmp\right\}}(v)=
	\begin{cases}
		\delta_{\vb+\Dv^\p}(v), &\text{if\; $\vb\in[0,\vmp-\Dv^\p]$}\\
		\delta_{\vmp}(v), &\text{if\; $\vb\in(\vm-\Dv^\p,\vmp]$}
	\end{cases},
\]
and since $\Qp$ is defined on $\Vp\times\Vp$, then $G^{\p\p}[\fp,\fp](t,v)$ corresponds to the gain term of the single population model. Thus, recalling~\cite{PgSmTaVg2}, $G^{\p\p}[\fp,\fp](t,v)$ can be written as
\begin{equation}\label{eq:self-gain}
\begin{aligned}
G^{\p\p}[\fp,\fp](t,v) = & \eta^\p (1-P) \fp \left[ \int_v^{\vmp}\fap d\va + \int_v^{\vmp}\fbp d\vb \right]  + \eta^\p P \fp \int_0^v\fap d\va \\
  & +\eta^\p P \fp(t,v-\Dv^\p) H_{\Dv^\p}(v) \int_{v-\Dv^\p}^{\vmp}\fap d\va \\
  & + \eta^\p P \delta_{\vmp}(v)\int_{\vmp-\Dv^\p}^{\vmp}\fbp d\vb\int_{\vb}^{\vmp}\fap d\va
\end{aligned}
\end{equation}
where $H_{\alpha}(v)$ is the Heaviside step function defined as $H_{\alpha}(v):=\frac{d}{dv}\max\{0,v-\alpha\}$, $\alpha\in\mathbb{R}$. Notice that the first two integrals on the right-hand side actually coincide, in the space homogeneous case. However, we kept them separate to stress the fact that they come from different contributions.

Conversely, the gain term resulting from the collision operators~\eqref{eq:cross-collision} describing the cross-interactions has to be treated differently because $\Qpq$ is defined on $\Vp\times\Vq$ and we have to distinguish $\vmp>\vmq$ (i.e. $\Vp\supset\Vq$) from $\vmp<\vmq$ (i.e. $\Vp\subset\Vq$), see Figure~\ref{fig:domains}. Since we also want to separate the cases $\vb\leq\va$ and $\vb>\va$, we take into account both these alternatives by rewriting the generic gain term $G^{\p\q}[\fp,\fq](t,v)$ as
\begin{align}\label{eq:cross-gain}
	G^{\p\q}[\fp,\fq](t,v)=&\eta^{\p\q} \int_0^{\min\{\vmp,\vmq\}} \int_{\vb}^{\vmq} \left[ (1-P) \delta_{\vb}(v) + P \delta_{\min\{\vb+\Dv^\p,\vmp\}}(v) \right] \fbp \faq d\va d\vb \\
	& + \eta^{\p\q} \int _0^{\vmp} \int_0^{\min\{\vb,\vmq\}} \left[ (1-P)\delta_{\va}(v) + \delta_{\vb}(v) \right] \fbp \faq d\va d\vb. \nonumber
\end{align}
Computing explicitly the terms appearing in $G^{\p\q}[\fp,\fq](t,v)$, we obtain two different expressions. If $\vmp>\vmq$:
\begin{equation}\label{eq:cross-gain1}
\begin{aligned}
	G^{\p\q}[\fp,\fq](t,v)=&\eta^{\p\q} (1-P) \fp \chi_{\left[0,\vmq\right]}(v) \int_{v}^{\vmq} \faq d\va + \eta^{\p\q} (1-P) \fq \int_{v}^{\vmp} \fbp d\vb \\
	& + \eta^{\p\q} P \fp \int_{0}^{\min\{v,\vmq\}} \faq d\va \\
	& + \eta^{\p\q} P \fp(t,v-\Dv^\p) \chi_{\left[\Dv^\p,\min\{\vmq+\Dv^\p,\vmp\}\right]}(v) \int_{v-\Dv^\p}^{\vmq} \faq d\va \\
	& + \eta^{\p\q} P \delta_{\vmp}(v) \int_{\min\{\vmq,\vmp-\Dv^\p\}}^{\vmq} \fbp d\vb \int_{\vb}^{\vmq} \faq d\va,
\end{aligned}
\end{equation}
while if $\vmp<\vmq$:
\begin{align}\label{eq:cross-gain2}
	G^{\p\q}[\fp,\fq](t,v)=&\eta^{\p\q} (1-P) \fp \int_{v}^{\vmq} \faq d\va + \eta^{\p\q} (1-P) \fq \int_{v}^{\vmp} \fbp d\vb + \eta^{\p\q} P \fp \int_{0}^{v} \faq d\va \nonumber \\
	& + \eta^{\p\q} P \fp(v-\Dv^\p) H_{\Dv^\p}(v) \int_{v-\Dv^\p}^{\vmq} \faq d\va \\ \nonumber
	& + \eta^{\p\q} P \delta_{\vmp}(v) \int_{\vmp-\Dv^\p}^{\vmp} \fbp d\vb \int_{\vb}^{\vmq} \faq d\va.
\end{align}

Using the explicit expressions~\eqref{eq:self-gain}, \eqref{eq:cross-gain1} and~\eqref{eq:cross-gain2} of the gain term, the model is then globally defined and it can be discretized as we do in the following subsection.

Finally, the following theorem states that when all the classes of vehicles composing the mixture of traffic have the same microscopic characteristics then the multi-population model is consistent with the equation defining the single population model introduced in~\cite{PgSmTaVg2}. This property is known in the kinetic theory of gas mixtures as {\em indifferentiability principle}, see e.g.~\cite{andries2001REPORT}.

\begin{theorem}[Indifferentiability principle]\label{th:indiff-principle}
Assume that the types of vehicles 
have the same physical and kinematic characteristics, i.e.
\[
	l^\p:=l, \quad \vmp:=\vm, \quad \Dv^\p:=\Dv, \; \forall\;\p.
\]
Let $\eta^\p=\eta^{\p\q}=:\eta$ be the interaction rate. Then the total distribution function $f:\mathbb{R}^+\times \mathcal{V} \to \mathbb{R}^+$, defined as
\begin{equation}\label{eq:total-distr}
	f(t,v)=\sum_{\p} \fp(t,v)
\end{equation}
obeys the evolution equation of the single population model introduced in~\cite{PgSmTaVg2}.
\end{theorem}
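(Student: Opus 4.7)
The plan is to sum the evolution equation~\eqref{eq:model1} over all populations and show that, under the hypothesis, every kernel that appears reduces to the single common kernel $A$, so all sums factor through the definition $f=\sum_\p \fp$.

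First I would note three simplifications coming from the hypothesis. (i) Since $\vmp=\vm$ for every $\p$, all velocity domains coincide, $\Vp=\Vq=\mathcal{V}:=[0,\vm]$, so the distinction between the two cases $\vmp>\vmq$ and $\vmp<\vmq$ in~\eqref{eq:cross-gain1}--\eqref{eq:cross-gain2} disappears and both reduce to the self-interaction expression~\eqref{eq:self-gain}. (ii) Because $\Dv^\p=\Dv$ and $l^\p=l$ are the same for every class, $s=\sum_\p l^\p\rho^\p=l\sum_\p\rho^\p=l\rho$, where $\rho=\int_{\mathcal V} f\,dv$ is the total density; hence $P(s)$ depends only on the aggregate variable of the mixture, exactly as in the single-population model. (iii) By inspection of~\eqref{eq:Adelta}, the probability density depends on $\vb,\va,v,s$ and on the parameters $\Dv^\p,\vmp$ only; under the hypothesis, $\Ap=\Apq=:A(\vb\to v|\va;s)$ for all $\p$ and $\q\in\neg\p$.

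Next I would sum~\eqref{eq:model1} over $\p$ and rewrite
\[
\partial_t f(t,v)=\sum_\p Q^{\p\p}[\fp,\fp](t,v)+\sum_\p\sum_{\q\in\neg\p}Q^{\p\q}[\fp,\fq](t,v).
\]
Using $\eta^\p=\eta^{\p\q}=\eta$ and the common kernel $A$, every gain contribution is of the form
\[
\eta\int_{\mathcal V}\int_{\mathcal V}A(\vb\to v|\va;s)\,\fp(t,\vb)\,\mathfrak{g}(t,\va)\,d\va\,d\vb,
\]
with $\mathfrak{g}$ equal either to $\fp$ (self-term) or to $\fq$ (cross-terms). Collecting the double sum $\sum_\p\sum_\q$ (the $\q=\p$ case supplied by the self-term, the $\q\neq\p$ cases by the cross-terms), the bilinearity of the integral in its two arguments lets me pull the sums inside and identify $\sum_\p\fp(t,\vb)=f(t,\vb)$ and $\sum_\q\fq(t,\va)=f(t,\va)$. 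The same manipulation applied to the loss terms produces $f(t,v)\int_{\mathcal V}\eta f(t,\va)\,d\va$.

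Combining these two computations gives
\[
\partial_t f(t,v)=\eta\int_{\mathcal V}\int_{\mathcal V}A(\vb\to v|\va;s)f(t,\vb)f(t,\va)\,d\va\,d\vb-f(t,v)\int_{\mathcal V}\eta f(t,\va)\,d\va,
\]
which is precisely the single-population equation of~\cite{PgSmTaVg2} with $s=l\rho$. The only delicate point, and the place to be careful, is step (i): one must verify that after the identifications $\vmp=\vmq=\vm$ and $\Dv^\p=\Dv$, the boundary terms in~\eqref{eq:cross-gain1}--\eqref{eq:cross-gain2} (the indicator functions $\chi_{[0,\vmq]}$, $\chi_{[\Dv^\p,\min\{\vmq+\Dv^\p,\vmp\}]}$, and the Dirac contributions at $v=\vmp$) collapse exactly to their counterparts in~\eqref{eq:self-gain}. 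Once this is checked, the rest is pure linearity.
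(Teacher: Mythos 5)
Your proposal is correct and follows essentially the same route as the paper: identify the common velocity space $\mathcal{V}$, the common kernel $A(\vb\to v|\va;s)$ with $s=\rho/\rho_{\max}$, and the common rate $\eta$, then sum the equations over $\p$ and use bilinearity of the collision operators to collapse $\sum_\p\fp$ and $\sum_\q\fq$ into $f$. The ``delicate point'' you flag about the boundary terms is handled implicitly in the paper by working with the abstract form~\eqref{eq:collision} rather than the expanded gain expressions, which is also what your displayed computation effectively does.
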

\begin{proof}
If the populations have the same microscopic features then 
for any fixed $\p$ in the set of all classes of vehicles the velocity spaces are such that $\Vp=\Vq:=\mathcal{V}$, $\forall\;\q\in\neg \p$, and the gain terms~\eqref{eq:self-gain}-\eqref{eq:cross-gain} are the same because now the probability densities $\Ap$ and $\Apq$ are defined on the same velocity space $\mathcal{V}\times\mathcal{V}$. Then, we have $\Ap=\Apq:=A(\vb\to v|\va;s)$ and moreover $s=\sum_{\p} \frac{\rho^\p}{\rho^\p_{\max}}=\frac{\sum_{\p}  \rho^\p}{\rho_{\max}}=\frac{\rho}{\rho_{\max}}$. In addition to that, the interaction rates are the same because the populations are identical, hence finally the multi-population model writes as
\begin{align*}
	\partial_t \fp & = \Qpp[\fp,\fp](t,v) + \sum_{\q\in\neg \p} \Qpq[\fp,\fq](t,v) \\
	& = \eta \int_{\mathcal{V}}\int_{\mathcal{V}} A(\vb\to v|\va;s) \fbp \sum_{j=\p,\q\in\neg\p} f^{j *} d\va d\vb - \eta \fp \int_{\mathcal{V}} \sum_{j=\p,\q\in\neg\p} f^{j *} d\va.
\end{align*}
Summing this equation over $\p$ and using the definition~\eqref{eq:total-distr}, we obtain
\[
	\partial_t f(t,v) = \eta \int_{\mathcal{V}}\int_{\mathcal{V}} A(\vb\to v|\va;s) f(t,\vb) f(t,\va) d\va d\vb - \eta f(t,v) \int_{\mathcal{V}} f(t,\va) d\va
\]
which represents the equation for the single population model given in~\cite{PgSmTaVg2}.
\end{proof}

\subsection{Discretization of the model}
\label{sec:discrete-model}

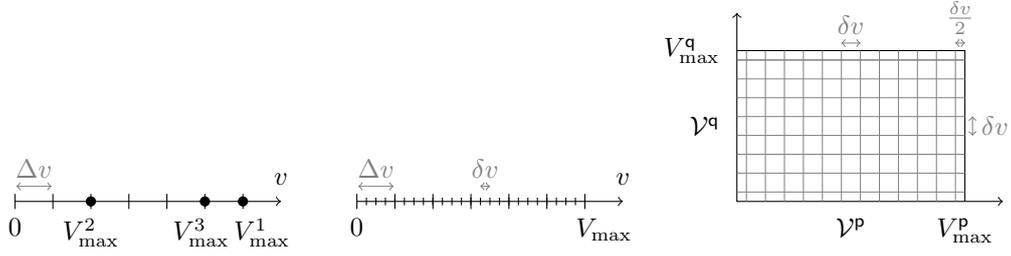
\begin{figure}
\centering
\begin{tikzpicture}
\draw [->] (0,0) -- (3.5,0);
\draw (0,-0.1) -- (0,0.1);
\node [below] at (0,-0.1) {$0$};
\node [above] at (3.5,0.1) {$v$};
 \draw \foreach \k in {0.5,1,1.5,2,2.5,3}
    {(\k,-0.1) -- (\k,0.1)};
\draw [fill] \foreach \k in {1,2.5,3}
    {(\k,0) circle [radius=0.06] };
\draw[help lines,<->] (0.02,0.2) -- (0.48,0.2)
    node[pos=0.5, above] {$\Delta v$};
\node[below] at (2.45,-0.1) {$V_{\max}^{3}$};
\node[below] at (3.25,-0.1) {$V_{\max}^{1}$};
\node[below] at (1,-0.1) {$V_{\max}^{2}$};
\begin{scope}[xshift=4.5 cm]
\draw [->] (0,0) -- (3.5,0);
\draw (0,-0.1) -- (0,0.1);
\node [below] at (0,-0.1) {$0$};
\node [above] at (3.5,0.1) {$v$};
 \draw \foreach \k in {0.5,1,1.5,2,2.5,3}
    {(\k,-0.1) -- (\k,0.1)};
\draw \foreach \k in
 {0.125,0.25,0.375,0.5,0.625,0.75,0.875,1,1.125,1.25,1.375,1.5,1.625,1.75,1.875,2,2.125,2.25,2.375,2.5,2.625,2.75,2.875,3}
    {(\k,-0.05) -- (\k,0.05)};
\draw[help lines,<->] (0.02,0.2) -- (0.48,0.2)
    node[pos=0.5, above] {$\Delta v$};
\draw[help lines,<->] (1.625,0.2) -- (1.75,0.2)
    node[pos=0.5, above] {$\delta v$};
\node[below] at (3.25,-0.1) {$V_{\max}$};
\end{scope}
\begin{scope}[xshift=9.5 cm]
\draw [<->] (0,2.5) -- (0,0) -- (3.5,0);
\draw (0,2) -- (3,2) -- (3,0);
\draw[help lines,-] (0,2-0.125) -- (3,2-0.125);
\draw[help lines,-] (0+0.125,2) -- (0+0.125,0);
\draw[help lines,-] (0,0+0.125) -- (3,0+0.125);
\draw[help lines,-] (3-0.125,0) -- (3-0.125,2);
\draw[help lines,-] \foreach \k in {0.25,0.5,0.75,1,1.25,1.5,1.75,2,2.25,2.5}
	{(\k+0.125,0) -- (\k+0.125,2)};
\draw[help lines,-] \foreach \k in {0.25,0.5,0.75,1,1.25,1.5,1.75}
	{(0,\k+0.125) -- (3,\k+0.125)};
\draw[help lines,<->] (3.1,0.875) -- (3.1,1.125)
    node[pos=0.5, right] {$\delta v$};
\draw[help lines,<->] (1.375,2.1) -- (1.625,2.1)
    node[pos=0.5, above] {$\delta v$};
\draw[help lines,<->] (3-0.125,2.1) -- (3,2.1)
    node[pos=0.5, above] {$\tfrac{\delta v}{2}$};
\node [below] at (1.5,-0.1) {$\mathcal{V}^{\mathsf{p}}$};
\node [left] at (-0.1,1) {$\mathcal{V}^{\mathsf{q}}$};
\node[left] at (-0.1,2) {$V_{\max}^{\mathsf{q}}$};
\node[below] at (3,-0.1) {$V_{\max}^{\mathsf{p}}$};
\end{scope}
\end{tikzpicture}
\caption{Discretization of velocity spaces.\label{fig:discretization}}
\end{figure}

In order to address the qualitative properties of the model, we consider a discretization of the velocity spaces. The study becomes simpler with the following assumptions:
\begin{description}
	\item[Assumption 1] The velocity jump $\Dv^\p$ is a fixed parameter and $\Dv^\p=\Dv$, $\forall\;\p$;
	\item[Assumption 2] $\Dv=\vmp/T^\p$, with $T^\p\in\mathbb{N}$, $\forall\;\p$. This means that
	\[
		\abs{\vmp-\vmq}=m^{\p\q}\Dv, \quad \forall\;\p,\, \q\in\neg\p,\, m^{\p\q}\in\mathbb{N},
	\]
	i.e. the distance between the maximum velocities of the populations is a multiple of $\Dv$, see the left panel of Figure~\ref{fig:discretization};
	\item[Assumption 3] Let $\dv$ be the numerical parameter of the velocity space discretization. We take $\dv=\frac{\Dv}{r}$, i.e. $\Dv$ corresponds to an integer number of intervals in the velocity discretization, see the center panel of Figure~\ref{fig:discretization}. Thus $r=\frac{\np-1}{T^\p}\in\mathbb{N}$ depends on the number of grid points $\np$.
\end{description}

Notice that Assumption 1 is not as restrictive as it appears. Since $\Dv^\p$ represents the instantaneous velocity jump of a vehicle as a result of the acceleration interaction, it might be thought of as another microscopic feature characterizing the classes of vehicles. Then we may suppose that the populations do not differ in the jump of velocity but we can characterize them by assuming that they have different microscopic accelerations $a^\p$. As proved in~\cite{PgSmTaVg2} in the case of a single population model, the acceleration of the vehicles depends on the product of $\Dv$ and the interaction rate, thus in order to account for different accelerations in each class we could act on the interaction rates $\eta^\p$ and $\eta^{\p\q}$, without modifying $\Dv$. However, this is relevant only in the spatially inhomogeneous model because in the framework analyzed in this work the interaction rates influence only the rate of convergence towards equilibrium solutions. However, the equilibrium solutions themselves do not depend on the interaction rates.

Finally, observe that Assumption 2 leads to:
\[
	\eta^{\p\q}P\delta_{\vmp}(v) \int_{\min\{\vmq,\vmp-\Dv\}}^{\vmq} \fbp d\vb \int_{\vb}^{\vmq} \faq d\va \equiv 0
\]
in the cross-collision term~\eqref{eq:cross-gain1}, i.e. when $\Vp\supset\Vq$. In fact, the previous integral is zero since $\min\{\vmq,\vmp-\Dv\}=\vmq$.

We define the velocity cells $I_j^\p=[(j-\tfrac32)\dv, (j-\tfrac12)\dv]\cap [0,\vmp]$, $\forall\;\p$ and for $j=1,\dots,\np=\frac{\vmp}{\dv}+1$. Note that all cells have amplitude $\dv$ except $I_1^\p$ and $I_{\np}^\p$ which have amplitude $\dv/2$. The velocity grid nodes, located at the center of each cell, are $v_1^\p=\dv/4, v_{\np}^\p=\vmp -\dv/4$, and $v_j^\p=(j-1)\dv$ for $j=2,\dots,\np-1$. This choice was already used in~\cite{PgSmTaVg2} and it is convenient for computations because all grids with $r>1$  contain all the points of the coarser mesh with $r=1$ (except the first and the last point). See the right panel of Figure~\ref{fig:discretization} for the discretization of $\Vp\times\Vq$.

In order to discretize the model, we approximate each velocity distribution with the piece-wise constant function
\begin{equation} \label{eq:f:discrete}
\fp(t,v) = \sum_{j=1}^{\np} f_j^\p(t) \frac{\chi_{I_j^\p}(v)}{\abs{I_j^\p}}
\end{equation}
where $f_j^\p:\mathbb{R}^+\to[0,\rho^\p]$ represents the number of $\p$-vehicles traveling with velocity $v \in I_j^\p$. 

By integrating the kinetic equation \eqref{eq:model1} over the cells $I_j^\p$ we obtain the following system of ordinary differential equations
\begin{equation}
		\frac{df_j^\p(t)}{dt}=\Qp_j[\fp,(\fp,\fq)](t) =\underbrace{\int_{I_j^\p} \Qpp[\fp,\fp](t,v)dv}_{\Qpp_j[\fp,\fp](t)}+\underbrace{\int_{I_j^\p} \Qpq[\fp,\fq](t,v)dv}_{\Qpq_j[\fp,\fq](t)}, \quad j=1,\dots,\np,\,\forall\;\p
    \label{eq:discretesys}
\end{equation}
whose initial condition $f_1^\p(0),\dots,f_{\np}^\p(0)$ are such that:
\[
	\sum_{j=1}^{\np} f_j^\p(0)=\int_{\Vp} \fp(t=0,v)dv=\rho^\p, \quad \forall\;\p,\; \rho^\p\in[0,\rho^\p_{\max}]
\]
and $\rho^\p$ is the initial density, which remains constant in the  spatially homogeneous case, see Remark~\ref{rem:consmass}.

Once the right-hand side of~\eqref{eq:discretesys} is computed, then the ODE system can be easily rewritten by means of the matrices $\A^{\p,j}$ and $\B^{\p\q,j}$, for $j=1,\dots,\np$, which are the so-called {\em self}- and {\em cross-interaction} matrices, respectively. The terms $\Qpp_j[\fp,\fp](t)$ and $\Qpq_j[\fp,\fq](t)$ can be explicitly written out and they appear in the Appendix~\ref{app:discrete-terms}.

The multi-population model writes finally as
\begin{equation}\label{eq:explicit-discretesys}
\begin{aligned}
	\frac{df_j^\p(t)}{dt}=& \eta^\p \sum_{h,k=1}^{\np} A_{hk}^{\p,j} f_h^\p f^{\p k} + \sum_{\q\in\neg\p} \eta^{\p\q} \sum_{h=1}^{\np} \sum_{k=1}^{\nq} B_{hk}^{\p\q,j} f_h^\p f^{\q k} \\
	& - f_j^\p\left( \eta^\p \sum_{k=1}^{\np} f^{\p k} + \sum_{\q\in\neg\p}  \eta^{\p\q} \sum_{k=1}^{\nq} f^{\q k} \right), \quad j=1,\dots,\np, \forall\;\p
\end{aligned}
\end{equation}
where the kinetic distribution functions of candidate and field vehicles are distinguished by the position of the index of the components: bottom-right for candidate vehicles (such as e.g. $f_h^\p$), top-right for field vehicles (such as e.g. $f^{\p k}$ or $f^{\q k}$). The matrices $\A^{\p,j}$ and $\B^{\p\q,j}$ are defined as
\begin{equation}
A^{\p,j}_{hk}=\mathcal{P}\left(\vb\in I_h^\p \to v\in I_j^\p | \va\in I_k^\p\right) \quad\text{and}\quad B^{\p\q,j}_{hk}=\mathcal{P}\left(\vb\in I_h^\p \to v\in I_j^\p | \va\in I_k^\q\right),
\end{equation}
namely they contain in the position $(h,k)$ the probability that the candidate vehicle with velocity in $I_h^\p$ will acquire a velocity in $I_j^\p$ when it interacts with a field vehicle traveling at a velocity in $I_k^\p$, if the $\p$-vehicles play also the role of field class, or in $I_k^\q$ otherwise.

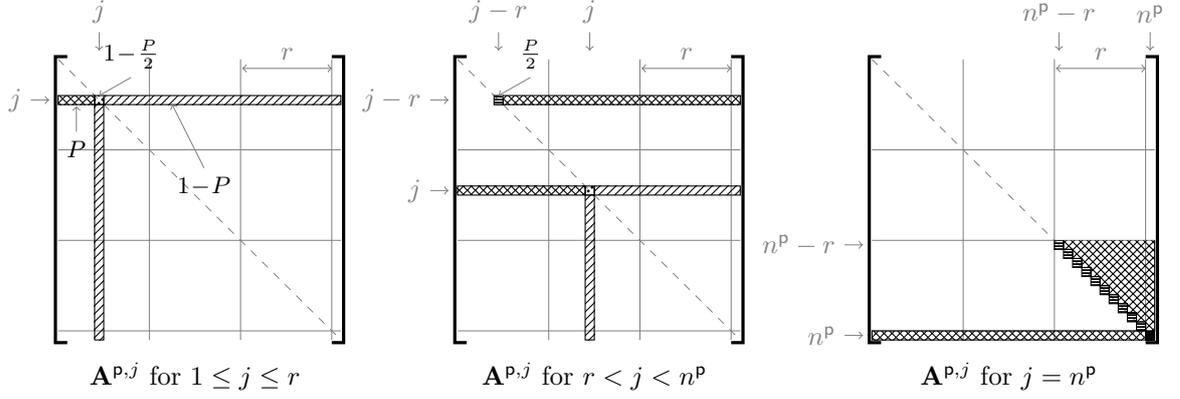
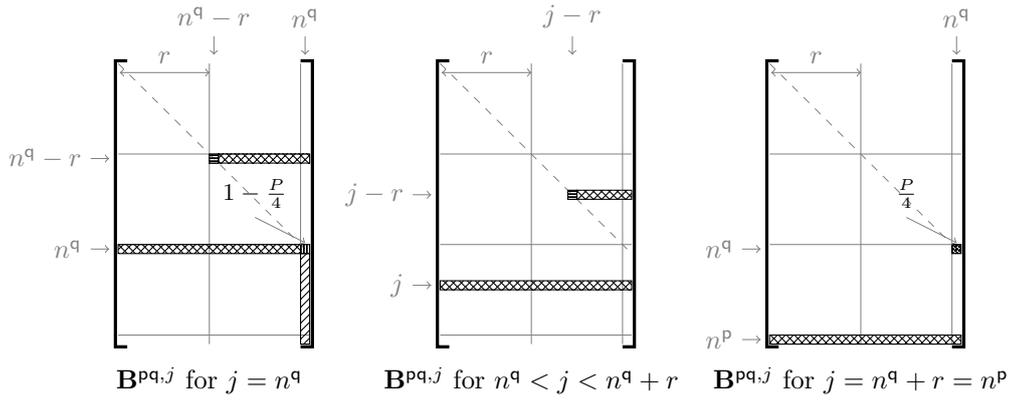
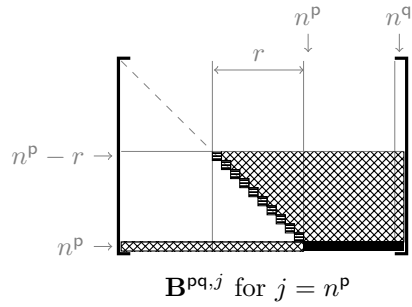
\begin{figure}
\centering
\subfloat[][Self-interaction matrices $\A^{\p,j}$, $j=1,\dots,\np$.\label{fig:self-matrices}]{
\begin{tikzpicture}[y={(0,-1cm)},scale=1.2]
\begin{scope}
\skeleton
\node[anchor=center] at (1.5,3.5) {$\A^{\p,j}$ for $1\leq j\leq r$};
\filldraw[pattern=\patP] (0,0.4) rectangle (0.4,0.5);
\filldraw[pattern=\patUnoMenoPmezzi] (0.4,0.4) rectangle (0.5,0.5);
\filldraw[pattern=\patUnoMenoP] (0.5,0.4) rectangle (3.1,0.5);
\filldraw[pattern=\patUnoMenoP] (0.4,0.5) rectangle (0.5,3.1);
\draw[help lines,->,yshift=-0.45cm]
     (-0.3,0) node[left]{$j$} -- (-0.1,0);
\draw[help lines,->,xshift=0.45cm]
     (0,-0.3) node[above]{$j$} -- (0,-0.1);
\draw[help lines,->] (0.2,0.8) node[below,black]{\small$P$} -- (0.2,0.52);
\draw[help lines,->] (0.8,0.2) node[above,black]{\small$1\!-\!\tfrac{P}{2}$} -- (0.45,0.4);
\draw[help lines,->] (1.6,1.2) node[below,black]{\small$1\!-\!P$} -- (1.25,0.5);
\end{scope}
\end{tikzpicture}
\begin{tikzpicture}[y={(0,-1cm)},scale=1.2]
\begin{scope}[xshift=4.5cm]
\skeleton
\node[anchor=center] at (1.5,3.5) {$\A^{\p,j}$ for $r<j<\np$};
\filldraw[pattern=\patPmezzi] (0.4,0.4) rectangle (0.5,0.5);
\filldraw[pattern=\patP] (0.5,0.4) rectangle (3.1,0.5);
\filldraw[pattern=\patP] (0,1.4) rectangle (1.4,1.5);
\filldraw[pattern=\patUnoMenoPmezzi] (1.4,1.4) rectangle (1.5,1.5);
\filldraw[pattern=\patUnoMenoP] (1.5,1.4) rectangle (3.1,1.5);
\filldraw[pattern=\patUnoMenoP] (1.4,1.5) rectangle (1.5,3.1);
\draw[help lines,->,yshift=-1.45cm]
     (-0.3,0) node[left]{$j$} -- (-0.1,0);
\draw[help lines,->,xshift=1.45cm]
     (0,-0.3) node[above]{$j$} -- (0,-.1);
\draw[help lines,->,yshift=-0.45cm]
     (-0.3,0) node[left]{$j-r$} -- (-0.1,0);
\draw[help lines,->,xshift=0.45cm]
     (0,-0.3) node[above]{$j-r$} -- (0,-0.1);

\draw[help lines,->] (0.8,0.2) node[above,black]{\small$\tfrac{P}{2}$} -- (0.45,0.4);
\end{scope}
\end{tikzpicture}
\begin{tikzpicture}[y={(0,-1cm)},scale=1.2]
\begin{scope}[xshift=9cm]
\skeleton
\node[anchor=center] at (1.5,3.5) {$\A^{\p,j}$ for $j=\np$};

\filldraw[pattern=\patP] (2.1,2)
  \foreach \k in {2.1,2.2,2.3,2.4,2.5,2.6,2.7,2.8,2.8,2.9,3.0}
    {-- (\k,\k) -- (\k+0.1,\k)}
  -- (3.1,2.0);

\filldraw[pattern=\patPmezzi] (2.1,2.0)
  \foreach \k in {2.1,2.2,2.3,2.4,2.5,2.6,2.7,2.8,2.8,2.9}
    {-- (\k,\k) -- (\k+0.1,\k)}
  -- (3.0,3.0) -- (2.9,3.0)
  \foreach \k in {2.9,2.8,2.7,2.6,2.5,2.4,2.3,2.2,2.1}
    {-- (\k,\k) -- (\k-0.1,\k)}
  -- (2.0,2.0) ;

\fill[pattern=\patUno] (3.0,3.0) rectangle (3.1,3.1);

\filldraw[pattern=\patP] (0,3.0) rectangle (3.0,3.1);

\draw[help lines,->,yshift=-2.05cm]
     (-0.3,0) node[left]{$\np-r$} -- (-0.1,0);
\draw[help lines,->,xshift=2.05cm]
     (0,-0.3) node[above]{$\np-r$} -- (0,-0.1);
\draw[help lines,->,yshift=-3.05cm]
     (-0.3,0) node[left]{$\np$} -- (-0.1,0);
\draw[help lines,->,xshift=3.05cm]
     (0,-0.3) node[above]{$\np$} -- (0,-0.1);
\end{scope}
\end{tikzpicture}
}\\
\subfloat[][Cross-interaction matrices $\B^{\p\q,j}$, $j\geq\nq$, for the case $\Vp\supset\Vq$.\label{fig:cross-matrices1}]{
\begin{tikzpicture}[y={(0,-1cm)},scale=1.2]
\begin{scope}[xshift=4.5cm]
\skeletonB
\node[anchor=center] at (1,3.5) {$\B^{\p\q,j}$ for $j=\nq$};
\filldraw[pattern=\patPmezzi] (1,1) rectangle (1.1,1.1);
\filldraw[pattern=\patP] (1.1,1) rectangle (2.1,1.1);
\filldraw[pattern=\patP] (0,2) rectangle (2,2.1);
\filldraw[pattern=\patUnoMenoPquarti] (2,2) rectangle (2.1,2.1);
\filldraw[pattern=\patUnoMenoP] (2,2.1) rectangle (2.1,3.1);
\draw[help lines,->,yshift=-2.05cm]
     (-0.3,0) node[left]{$\nq$} -- (-0.1,0);
\draw[help lines,->,xshift=2.05cm]
     (0,-0.3) node[above]{$\nq$} -- (0,-.1);
\draw[help lines,->,yshift=-1.05cm]
     (-0.3,0) node[left]{$\nq-r$} -- (-0.1,0);
\draw[help lines,->,xshift=1.05cm]
     (0,-0.3) node[above]{$\nq-r$} -- (0,-0.1);

\draw[help lines,->] (1.5,1.7) node[above,black]{\small$1-\tfrac{P}{4}$} -- (2.05,1.98);
\end{scope}
\end{tikzpicture}
\begin{tikzpicture}[y={(0,-1cm)},scale=1.2]
\begin{scope}[xshift=4.5cm]
\skeletonB
\node[anchor=center] at (1,3.5) {$\B^{\p\q,j}$ for $\nq<j<\nq+r$};
\filldraw[pattern=\patPmezzi] (1.4,1.4) rectangle (1.5,1.5);
\filldraw[pattern=\patP] (1.5,1.4) rectangle (2.1,1.5);
\filldraw[pattern=\patP] (0,2.4) rectangle (2.1,2.5);
\draw[help lines,->,yshift=-2.45cm]
     (-0.3,0) node[left]{$j$} -- (-0.1,0);
\draw[help lines,->,yshift=-1.45cm]
     (-0.3,0) node[left]{$j-r$} -- (-0.1,0);
\draw[help lines,->,xshift=1.45cm]
     (0,-0.3) node[above]{$j-r$} -- (0,-0.1);

\end{scope}
\end{tikzpicture}
\begin{tikzpicture}[y={(0,-1cm)},scale=1.2]
\begin{scope}[xshift=4.5cm]
\skeletonB
\node[anchor=center] at (1,3.5) {$\B^{\p\q,j}$ for $j=\nq+r=\np$};
\filldraw[pattern=\patPquarti] (2,2) rectangle (2.1,2.1);
\filldraw[pattern=\patP] (0,3) rectangle (2.1,3.1);
\draw[help lines,->,yshift=-3.05cm]
     (-0.3,0) node[left]{$\np$} -- (-0.1,0);
\draw[help lines,->,yshift=-2.05cm]
     (-0.3,0) node[left]{$\nq$} -- (-0.1,0);
\draw[help lines,->,xshift=2.05cm]
     (0,-0.3) node[above]{$\nq$} -- (0,-0.1);

\draw[help lines,->] (1.5,1.7) node[above,black]{\small$\tfrac{P}{4}$} -- (2.05,1.98);
\end{scope}
\end{tikzpicture}
}\\
\subfloat[][\mbox{Cross-interaction matrices $\B^{\p\q,j}$, $j=\np$, for the case $\Vp\subset\Vq$.}\label{fig:cross-matrices2}]{
\begin{tikzpicture}[y={(0,-1cm)},scale=1.2]
\begin{scope}[xshift=9cm]
\skeletonC
\node[anchor=center] at (1.5,2.5) {$\B^{\p\q,j}$ for $j=\np$};

\filldraw[pattern=\patP] (1.1,1)
  \foreach \k in {1.1,1.2,1.3,1.4,1.5,1.6,1.7,1.8,1.9,2.0}
    {-- (\k,\k) -- (\k+0.1,\k)}
  -- (3.1,2) -- (3.1,1);

\filldraw[pattern=\patPmezzi] (1.1,1)
  \foreach \k in {1.1,1.2,1.3,1.4,1.5,1.6,1.7,1.8,1.9,2.0}
    {-- (\k,\k) -- (\k+0.1,\k)}
  -- (2.0,2.0) -- (1.9,2.0)
  \foreach \k in {1.9,1.8,1.7,1.6,1.5,1.4,1.3,1.2,1.1}
    {-- (\k,\k) -- (\k-0.1,\k)}
  -- (1,1) ;

\fill[pattern=\patUno] (2.0,2.0) rectangle (3.1,2.1);

\filldraw[pattern=\patP] (0,2.0) rectangle (2.0,2.1);

\draw[help lines,->,yshift=-1.05cm]
     (-0.3,0) node[left]{$\np-r$} -- (-0.1,0);
\draw[help lines,->,xshift=2.05cm]
     (0,-0.3) node[above]{$\np$} -- (0,-0.1);
\draw[help lines,->,yshift=-2.05cm]
     (-0.3,0) node[left]{$\np$} -- (-0.1,0);
\draw[help lines,->,xshift=3.05cm]
     (0,-0.3) node[above]{$\nq$} -- (0,-0.1);
\end{scope}
\end{tikzpicture}
}
\caption{General structure of the interaction matrices.\label{fig:matrices}}
\end{figure}

In Figure~\ref{fig:self-matrices} we show the sparse structure of the self-interaction matrices $\A^{\p,j}\in\mathbb{R}^{\np\times \np}$, for $j=1,\dots,\np$. The cross-interaction matrices $\B^{\p\q,j}$ for the case $\Vp\supset\Vq$ have the same structure as the $\A^{\p,j}$'s, apart from being rectangular of dimensions $\np\times \nq$, with $\np>\nq$. Differences however arise for $j\geq \nq$, see Figure~\ref{fig:cross-matrices1} in which for simplicity we have assumed that $\nq+r=\np$ (i.e. $\vmp-\vmq=\Dv$). Finally, the cross-interaction matrices $\B^{\p\q,j}$ for the case $\Vp\subset\Vq$ are $\np\times \nq$, with $\np<\nq$. They can in turn be easily derived from the $\A^{\p,j}$'s, the only different case being the one for $j=\np$, see Figure~\ref{fig:cross-matrices2} in which for simplicity we have assumed that $\nq=\np+r$ (i.e. $\vmq-\vmp=\Dv$).

In all these figures, the nonzero elements are shaded with different hatchings, corresponding to the different values of the elements, as indicated in the panels in which they appear for the first time.

The fact that these matrices are sparse means that a velocity in $I_j^\p$ can be acquired only for special values of the pre-interaction velocity of candidate and field vehicles. In particular, the $j$-th row of the matrices contains the probability that the candidate vehicle {\em does not} change its speed. The nonzero elements of the $j$-th column are the probabilities that a candidate vehicle acquires a speed in $I_j^\p$ by braking down to the speed of the leading vehicle. The nonzero row, located at $h=j-r$, contains the probabilities that the candidate vehicle accelerates by $\Delta v$, acquiring therefore a velocity in $I_j^\p=I_h^\p+\Delta v$. The band between the rows  $h=j-r$ and $h=j$ is filled with zeroes because the acceleration is quantized, i.e. the post-interaction velocity is acquired by a velocity jump.

As it can be checked, both the self- and the cross-interaction matrices are stochastic with respect to the index $j$, i.e. all their elements are positive and bounded above by $1$ and
\[
	\sum_{j=1}^{\np} \A^{\p,j}=\mathfrak{O}_{\np\times\np}, \quad \sum_{j=1}^{\np} \B^{\p\q,j}=\mathfrak{O}_{\np\times\nq}, \quad \forall\; \p,\,\q\in\neg\p.
\]
where $\mathfrak{O}_{N\times M}$ is the matrix of size $N\times M$ with the value $1$ in each entry. These properties come from~\eqref{eq:Aprop}, and they guarantee mass conservation.

\subsection{Qualitative analysis}
\label{sec:analysis}
Now we study the well-posedness of the discrete-velocity model~\eqref{eq:explicit-discretesys} and for the sake of simplicity we work with two populations only, but the analysis can be generalized to multiple populations. We show that the model has a unique solution, which remains positive and bounded in time, and finally we compute the analytical equilibria.

To this end, we consider the Cauchy problem associated to the ODE system~\eqref{eq:explicit-discretesys} with initial conditions $f^\p_j(0)$, $j=1,\dots,\np$, $\forall\;\p$. Since in spatially homogeneous conditions the density is constant in time, the generic equation of the Cauchy problem can be written as
\begin{equation}\label{eq:CP}
\frac{df_j^\p(t)}{dt}= \eta^\p \sum_{h,k=1}^{\np} A_{hk}^{\p,j} f_h^\p f_k^\p + \sum_{\q\in\neg\p} \eta^{\p\q} \sum_{h=1}^{\np} \sum_{k=1}^{\nq} B_{hk}^{\p\q,j} f_h^\p f_k^\q - R^\p f_j^\p, \quad j=1,\dots,\np, \forall\;\p
\end{equation}
where $R^\p:=\eta^\p \rho^\p + \sum_{\q\in\neg\p}  \eta^{\p\q} \rho^\q$. Notice that in \eqref{eq:CP} we do not use the upper index to distinguish the distributions, but the index $h$ is referred to the candidate vehicle distribution, while the index $k$ to the field vehicle distributions. We assume that each density $\rho^\p$ is normalized with respect to $\rho^\p_{\max}$ so that
\begin{equation}\label{eq:init.cond}
\begin{aligned}
0 & \leq f^\p_j(0) \leq \rho^\p \leq 1, \quad j=1,\dots,n^\p, \forall\;\p\\
0 & \leq \sum_{j=1}^{n^\p} f^\p_j(0)=\rho^\p \leq 1, \quad \forall\;\p.
\end{aligned}
\end{equation}

Assume that the two populations are labeled with $\p$ and with $\q$, respectively, and let
\[
X=C\left(\mathbb{R}^+;\,\mathbb{R}^{n^\p}\right)\times C\left(\mathbb{R}^+;\,\mathbb{R}^{n^\q}\right)
\]
be the space of vector-valued continuous functions on $\mathbb{R}^+$. From now on, we will endow $\mathbb{R}^{n^\p}$ and $\mathbb{R}^{n^\q}$ with the $1$-norm. Let $(\uu,\vv)$ be a generic element of $X$ and let $\f^\p=\left(f^\p_1,\dots,f^\p_{n^\p}\right)$ and $\f^\q=\left(f^\q_1,\dots,f^\q_{n^\q}\right)$ be the solution of the Cauchy problem. We define
\[
\BB=\left\{(\uu,\vv)\in X : 0 \leq u_j(t) \leq \rho^\p, 0 \leq v_j(t) \leq \rho^\q,\; \sum_{j=1}^{n^\p} u_j(t) = \rho^\p, \sum_{j=1}^{n^\q} v_j(t) = \rho^\q \right\}
\]
the subset of $X$ such that $\rho^\p$ and $\rho^\q$ are fixed constants, the same as in~\eqref{eq:CP}, then the elements of $\BB$ have the properties we require for the solution $(\f^\p,\f^\q)$ of the Cauchy problem.

\begin{theorem}[Well posedness]
	\label{th:well.posedness}
	For any given initial condition $(\f^\p(0),\f^\q(0))$ satisfying the assumptions~\eqref{eq:init.cond}, there exists a unique global solution $(\f^\p,\f^\q)\in \BB$ of the Cauchy problem \eqref{eq:CP}.
\end{theorem}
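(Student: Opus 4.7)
The plan is to apply Picard--Lindelöf for local existence and uniqueness, then promote the local solution to a global one by showing that the compact set $\BB$ is positively invariant. Since the right-hand side of~\eqref{eq:CP} is a polynomial (of total degree two) in the components of $(\f^\p,\f^\q)$, it is automatically locally Lipschitz on $\mathbb{R}^{\np}\times\mathbb{R}^{\nq}$, so for every initial datum satisfying~\eqref{eq:init.cond} we obtain a unique maximal solution on some interval $[0,T^*)$. Once the trajectory is shown to stay in the compact set $\BB$, the classical continuation theorem forces $T^*=+\infty$, and the local uniqueness propagates to a unique global solution.

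The core of the argument is the invariance of $\BB$, for which I would verify, in order, mass conservation and componentwise non-negativity; the upper bounds $f^\p_j\le\rho^\p$ and $f^\q_k\le\rho^\q$ then come for free. For mass conservation, set $\sigma^\p(t):=\sum_{j=1}^{\np} f^\p_j(t)$ and $\sigma^\q(t):=\sum_{j=1}^{\nq} f^\q_j(t)$, sum~\eqref{eq:CP} over $j$, and use the column-stochasticity $\sum_j A^{\p,j}_{hk}=\sum_j B^{\p\q,j}_{hk}=1$ (the discrete counterpart of~\eqref{eq:Aprop}, stated right after the matrices are introduced) to obtain the closed $2\times 2$ quadratic system
\[
\tfrac{d\sigma^\p}{dt}=\sigma^\p\bigl[\eta^\p(\sigma^\p-\rho^\p)+\eta^{\p\q}(\sigma^\q-\rho^\q)\bigr],\qquad \tfrac{d\sigma^\q}{dt}=\sigma^\q\bigl[\eta^\q(\sigma^\q-\rho^\q)+\eta^{\q\p}(\sigma^\p-\rho^\p)\bigr].
\]
Thanks to~\eqref{eq:init.cond}, the constant pair $(\rho^\p,\rho^\q)$ is simultaneously a stationary point of this subsystem and its initial datum; uniqueness then forces $\sigma^\p(t)\equiv\rho^\p$ and $\sigma^\q(t)\equiv\rho^\q$ on $[0,T^*)$.

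For non-negativity I would use a tangent-cone argument at the boundary of the positive orthant. Suppose $f^\p_j(t_0)=0$ at some $t_0\in[0,T^*)$, while all other components remain $\ge 0$; then in~\eqref{eq:CP} the loss term $-R^\p f^\p_j$ vanishes and both gain sums are non-negative, since every entry $A^{\p,j}_{hk},B^{\p\q,j}_{hk}$ is a probability lying in $[0,1]$. Hence $(df^\p_j/dt)(t_0)\ge 0$, so the vector field points inward at every boundary face of $\{f^\p_j\ge 0\}$; Nagumo's invariance criterion (or, more elementarily, a perturbation by $+\varepsilon>0$ in each equation followed by $\varepsilon\to 0^+$) yields $f^\p_j(t)\ge 0$ throughout $[0,T^*)$, and the same reasoning applies to every component of $\f^\q$. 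Combining this with $\sum_j f^\p_j\equiv\rho^\p$ gives $0\le f^\p_j\le\rho^\p$, and analogously for $\f^\q$, so the trajectory is trapped in $\BB$; continuation then delivers $T^*=+\infty$ and $(\f^\p,\f^\q)\in\BB$. The main delicate point is really this boundary/invariance step: everything rests on the non-negativity of the stochastic matrices $\A^{\p,j},\B^{\p\q,j}$, itself a consequence of the positivity built into $\Ap$ and $\Apq$ via~\eqref{eq:Aprop}, so no finer estimate is actually needed.
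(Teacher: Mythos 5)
Your proposal is correct, and it follows the same overall skeleton as the paper (positivity from the non-negativity of the interaction matrices, mass conservation, boundedness, then global existence), but the technical implementation of each step is genuinely different. For positivity, both arguments rest on the quasi-positivity of the vector field at the boundary faces $\{f^\p_j=0\}$; you are slightly more careful in invoking Nagumo's criterion (or the $\varepsilon$-perturbation) rather than the paper's informal ``the solution cannot cross the hyperplane.'' For global existence, the paper derives the sublinear bound $\norm{\tfrac{d\f^\p}{dt}}_1\le R^\p(\np-1)\norm{\f^\p}_1$ and appeals to the global existence theorem for ODEs with sublinear growth, whereas you use local Picard--Lindel\"of plus positive invariance of the compact set $\BB$ and the continuation theorem; your route is logically cleaner, since the paper's sublinear estimate already uses $\sum_h f^\p_h=\rho^\p$ (mass conservation), which the paper only establishes afterwards in its Step~3. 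For mass conservation, the paper works with the integral (Duhamel) formulation and an explicit computation with the factor $e^{-tR^\p}$, while you sum the equations, exploit the column-stochasticity $\sum_j A^{\p,j}_{hk}=\sum_j B^{\p\q,j}_{hk}=1$ to obtain a closed autonomous system for $(\sigma^\p,\sigma^\q)$, and identify $(\rho^\p,\rho^\q)$ as its stationary point; this also lets you obtain the upper bounds $f^\p_j\le\rho^\p$ for free, where the paper instead runs a separate Gr\"onwall-type argument with the differential inequality $\tfrac{df^\p_j}{dt}+R^\p f^\p_j\le R^\p\rho^\p$. Both proofs are valid; yours is more economical and avoids the mild ordering issue in the paper's presentation.
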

\begin{proof}
	The proof of the theorem is organized by steps and is achieved by applying the basic general theorem on global existence and uniqueness of solutions to first order ODE systems with sublinear growth.
	\begin{description}
		\item[Step 1] First of all we prove that the solution $(\f^\p(t),\f^\q(t))$ (if any) of the Cauchy problem \eqref{eq:CP} remains positive in time if the initial conditions satisfy \eqref{eq:init.cond}. For this, consider the population $\p$ and assume that there exists an index $j=1,\dots,\np$ such that $f_j^\p(0)=0$. Then, since the elements of the transition matrices are non-negative, $\frac{df_j^\p(t)}{dt}\geq 0$ (see equation \eqref{eq:CP}). Therefore the solution $(\f^\p(t),\f^\q(t))$ of the first order ODE system \eqref{eq:CP} cannot cross the hyperplane $f_j^\p=0$ of the phase space during the time evolution and this guarantees that the solution is non-negative.
		
		\item[Step 2] Now we show that the hypotheses of the general theorem on global existence and uniqueness of solutions to first order ODE systems with sublinear growth are satisfied. In particular, observe that the right hand side of equation \eqref{eq:CP} is continuous and locally Lipschitz continuous, since it is at most a quadratic function in the unknowns $\f=(\f^\p,\f^\q)$. Moreover, focusing on the population $\p$, since $A_{hk}^{\p,j}\in[0,1]$ and $B_{hk}^{\p\q,j}\in[0,1]$, the non-negativeness of the solution (Step 1) and the conservation of mass yield the following bounds:
		\begin{gather*}
		0 \leq \sum_{h=1}^{\np} A_{hk}^{\p,j} f_h^\p \leq \sum_{h=1}^{\np} f_h^\p = \rho^\p, \quad 0 \leq \sum_{h=1}^{\np} B_{hk}^{\p\q,j} f_h^\p \leq \sum_{h=1}^{\np} f_h^\p = \rho^\p
		\end{gather*}
		which render each differential equation sublinear. In fact, we obtain
		\begin{align*}
		\norm{\frac{d\f^\p}{dt}}_1 & = \sum_{j=1}^{\np} \abs{\eta^\p \sum_{k=1}^{\np} \left(\sum_{h=1}^{\np} A_{hk}^{\p,j} f_h^\p\right) f_k^\p + \eta^{\p\q}  \sum_{k=1}^{\nq} \left(\sum_{h=1}^{\np} B_{hk}^{\p\q,j} f_h^\p\right) f_k^\q - R^\p f_j^\p} \\
		& \leq \sum_{j=1}^{\np} \abs{\eta^\p \left(\rho^\p\right)^2 + \eta^{\p\q} \rho^\p \rho^\q - R^\p f_j^\p} =  \sum_{j=1}^{\np} R^\p \abs{\rho^p - f_j^\p} \leq R^\p (n^\p-1) \norm{\f^\p}_1.
		\end{align*}
		Notice that a similar inequality holds for the population $\q$:
		\[
		\norm{\frac{d\f^\q}{dt}}_1 \leq R^\q (n^\q-1) \norm{\f^\q}_1.
		\]
		This implies the existence and uniqueness of a global and infinitely differentiable in time solution of the Cauchy problem associated to the ODE system~\eqref{eq:CP} with initial conditions \eqref{eq:init.cond}.
		
		\item[Step 3] Finally, we show that the solution $\f=(\f^\p,\f^\q)$ is an element of the subset $\BB$. We have already proved that $\f$ remains non-negative in time (Step 1). Again, let us focus the attention on the population $\p$. By Step 2 the following inequality holds:
		\[
		\frac{df_j^\p}{dt} + R^\p f_j^\p \leq R^\p \rho^\p.
		\]
		Multiplying it by $e^{t R^\p}$ and integrating in time we obtain
		\[
		f_j^\p e^{t R^\p} \leq f_j^\p(0) + \int_0^t R^\p \rho^\p e^{s R^\p} ds.
		\]
		Evaluating explicitly the integral and multiplying both sides by $e^{-t R^\p}$ we finally get
		\[
		f_j^\p \leq f_j^\p(0) e^{-t R^\p} + \rho^\p (1-e^{-t R^\p})
		\]
		which proves that $f_j^\p:\mathbb{R}^+\to[0,\rho^\p]\subseteq[0,1]$. Moreover, using the integral formulation of equation \eqref{eq:CP} we prove that the sum of the $f_j^\p$'s is constant for all $t\geq 0$ and equal to the density $\rho^\p$. In fact:
		\[
		\sum_{j=1}^{n^\p} f_j^\p = e^{-tR^\p} \sum_{j=1}^{n^\p} f^\p_j(0) + \int_0^t e^{(\tau-t)R^\p} \rho^\p R^\p d\tau=e^{-R^\p t} \rho^\p \left[ 1 + R^\p \int_0^t e^{\tau R^\p} d\tau \right] = \rho^\p.
		\]
		Similarly, we have that $f_j^\q:\mathbb{R}^+\to[0,\rho^\q]\subseteq[0,1]$ and $\sum_{j=1}^{n^\q} f_j^\q=\rho^\q$, for all $t\geq 0$. This proves that the solution $\f=(\f^\p,\f^\q)\in\BB$.
		\qedhere
	\end{description}
\end{proof}

Next we investigate the structure of the equilibria resulting from the ODE system~\eqref{eq:explicit-discretesys}. In particular, we prove prove that equilibria of each $\p$-class are uniquely determined by the initial densities $\rho^\p$, $\rho^\q$, $\forall\;\q\in\neg\p$, and that they do not depend on the number of grid points $\np$. Observe that a fixed value of $s\in[0,1]$ can be obtained with different values of the initial densities $\rho^\p$, $\rho^\q$, $\forall\;\q\in\neg\p$, thus the equilibria will not be uniquely determined once $s$ is chosen.

Moreover, the theorem shows that for any $s\in[0,1]$ the number of nonzero asymptotic distribution functions is determined by $T^\p=\frac{\vmp}{\Dv}$. More precisely, each $\p$-class of vehicles has exactly $T^\p+1$ non-vanishing equilibria, which are related to the cells $I_1^\p$, $I_{r+1}^\p$, $I_{2r+1}^\p,\dots,I_{\np}^\p$.

\begin{theorem}[Stable equilibria]
\label{th:equilibria}
	For any fixed $\Dv$, let $(\f_r)^\p$ denote the equilibrium of the ODE system~\eqref{eq:explicit-discretesys} related to the $\p$-class and obtained on the grid with spacing $\dv$ given by $\Dv=r\dv$ with $r=\frac{\np-1}{T^\p}\in\mathbb{N}$, $\forall\;\p$. Then
	\[
		\left(\f_r\right)^\p_j=\begin{cases} \left(\f_1\right)^\p_{\left\lceil \frac{j}{r} \right\rceil } & \text{if mod$(j-1,r)=0$}\\
		0 & \text{otherwise} \end{cases}
	\]
	is the unique stable equilibrium for all classes of vehicles $\p$ and the values of $\left(\f_1\right)^\p$ depend uniquely on the initial densities.
\end{theorem}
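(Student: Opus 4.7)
The plan is to exploit an invariance principle: the discretised flow essentially preserves the ``residue class'' of each cell index modulo $r$, and on the invariant subset corresponding to residue zero (the ``special'' cells of index $(k-1)r+1$, $k=1,\ldots,T^\p+1$) the dynamics reduces, after relabelling, to the coarsest-grid ($r=1$) system whose equilibrium has already been characterised in Appendix~\ref{app:equilibria}.

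First I would establish the invariance: if $f_j^\p(0)=0$ for every $j$ with $\mathrm{mod}(j-1,r)\neq 0$, then the same holds for all $t\geq 0$. By inspecting Figure~\ref{fig:matrices}, the only non-vanishing entries of $\A^{\p,j}$ and $\B^{\p\q,j}$ lie in row $h=j$ (the ``do nothing'' contributions), in column $k=j$ (candidates braking onto a leader at $I_j^\p$), or in row $h=j-r$ (the quantised acceleration contribution). All three require either the candidate or the field distribution to carry mass at a cell sharing the residue class of $j$ modulo $r$. Hence for non-special $j$ the right-hand side of~\eqref{eq:explicit-discretesys} vanishes identically when the initial datum is supported on the special cells, and uniqueness of the Cauchy problem (Theorem~\ref{th:well.posedness}) forces $f_j^\p(t)\equiv 0$.

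Next I would perform the reduction. On the invariant special-cell subspace, set $g_k^\p := f_{(k-1)r+1}^\p$ for $k=1,\ldots,T^\p+1$. The values $P$, $1-P$, $P/2$, $1-P/2$, $P/4$, $1-P/4$ filling the non-zero entries of the interaction matrices depend only on the combinatorial position of the target cell (corner, edge, or interior of the velocity square in Figure~\ref{fig:matrices}) and not on $r$, so the restricted ODE for the $(g_k^\p)$ coincides exactly with the $r=1$ instance of~\eqref{eq:explicit-discretesys} with the same densities. The explicit computations of Appendix~\ref{app:equilibria} then give a unique positive equilibrium determined by $\rho^\p$ and $\rho^\q$ (through $s$); this is $(\f_1)^\p$, and mapping back via the relabelling yields the asserted formula.

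The delicate step is the uniqueness of the \emph{stable} equilibrium. The same invariance argument applies to each shifted lattice $\{1+s,\, r+1+s,\,\ldots\}$ with $s=1,\ldots,r-1$, producing additional equilibria that must be excluded. My strategy would be to linearise~\eqref{eq:explicit-discretesys} around each candidate and show that a perturbation injecting mass on a cell of a different residue class produces a net braking flux draining the shifted lattice into the special one, so the shifted equilibria are unstable, while the special equilibrium inherits stability from the coarse-grid system (which is the ``$\delta$-model'' already analysed in~\cite{PgSmTaVg2}). I expect this to be the main obstacle, because of the high dimension and the coupling between self- and cross-interaction blocks; the cleanest route would be a Lyapunov functional measuring the total mass sitting on non-special cells, shown to be non-increasing along the flow and strictly decreasing unless one is already at the special equilibrium.
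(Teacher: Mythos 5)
Your first two steps (invariance of the special-cell lattice and reduction to the $r=1$ system) are sound and give a clean structural explanation of the quantization, but the proof is incomplete precisely where you say you expect the main obstacle to be: the uniqueness of the \emph{stable} equilibrium is never actually established, only two candidate strategies are named. Moreover, both strategies run into concrete obstructions. The shifted lattices $\{1+s, r+1+s,\dots\}$ are \emph{not} invariant, because every candidate with speed in $(\vmp-\Dv,\vmp)$ accelerates into the single absorbing cell $I_{\np}^\p$, which belongs to the residue class of the special lattice; so the ``additional equilibria'' you propose to exclude by linearisation are not where you place them, and the actual equilibrium set (which includes the genuinely spurious equilibria with $f_1^\p=0$ discussed in the paper's subsequent remark) is more delicate. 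The proposed Lyapunov functional --- total mass on non-special cells --- is not monotone either: a candidate sitting on a special cell that brakes onto a field vehicle occupying a non-special cell transfers mass \emph{onto} the non-special lattice, so the functional can increase along the flow. Without repairing this step the theorem is not proved.

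For comparison, the paper avoids a dynamical stability argument altogether. It observes that the equilibrium condition $\frac{d}{dt}f_j^\p=0$ is, for each $j$, a quadratic equation in $f_j^\p$ whose coefficients involve only the already-determined values $f_1^\p,\dots,f_{j-1}^\p$ (and the densities); summing the equations over the populations decouples the totals $f_j^1+f_j^2$, whose quadratics are solved explicitly; the stable root is then selected by the elementary scalar criterion (larger root if the leading coefficient $\tfrac{3P-2}{2}$ is negative, smaller root otherwise), and non-negativity of solutions from Theorem~\ref{th:well.posedness} forces the zero root at every $j$ with $\mathrm{mod}(j-1,r)\neq 0$. This recursion constructs the equilibrium, proves it is supported on multiples of $\Dv$, and identifies it as the unique stable one in a single sweep --- the bifurcation at $P=\tfrac12$ in~\eqref{eq:stable-f1} falls out of the same computation. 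If you want to keep your invariance-plus-reduction architecture, you would still need to supply an argument of comparable strength for why generic data select the special-lattice equilibrium; the cell-by-cell root-selection argument of the paper is currently the only complete route on offer.
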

\begin{proof}
To prove the statement, we compute explicitly the equilibrium solutions of the discretized model, using the explicit expression of the collision kernel given in~\eqref{eq:Jpp:allj}, \eqref{eq:Jpq:allj2} and~\eqref{eq:Jpq:allj1}. For the sake of simplicity we again consider only two populations and we indicate the related discrete distributions as $\fub$ and $\fdb$, with $\mathcal{V}^{1}\supset\mathcal{V}^{2}$.

The equilibrium equations resulting from $\frac{d}{dt}\fp_j=0$, $\forall\;j=1,\dots,\np$, $\forall\;\p\in\{1,2\}$, are quadratic functions of $\fp_j$ and we prove that for any $j$ they depend only on the previous $j-1$ equilibrium values. In order to find the stable equilibrium, we recall that it is the larger root of the quadratic equation if its leading coefficient is negative, while it is the smaller root otherwise. This consideration will be applied several times during the proof.

For $j=1$, the equation $\frac{d}{dt}\fp_1=0$ is computed by means of the expressions~\eqref{eq:Jppsmallj}-\eqref{eq:Jpqsmallj1} if $\p=1$  and~\eqref{eq:Jppsmallj}-\eqref{eq:Jpqsmallj2} if $\p=2$. Using the fact that $\sum_k \fp_k=\rho^\p$, $\forall\;\p\in\{1,2\}$, we obtain
\begin{equation}\label{eq:f1}
	\frac{d}{dt} \fp_1=0 \, \Leftrightarrow \, \left(\frac{3P-2}{2}\right) \left(\fp_1\right)^2 + \left[\left(\frac{3P-2}{2}\right)\fq_1 + (1-2P)\rho^\p-P\rho^\q\right]\fp_1+(1-P)\fq_1\rho^\p=0,\;\forall\;\p
\end{equation}
which is a quadratic equation for $\fp_1$, $\forall\;\p$. In order to define the asymptotic expression of $\fp_1$, we consider the sum of the equation~\eqref{eq:f1} for $\p=1$ and $\p=2$:
\begin{equation}\label{eq:sumf1}
	\left(\frac{3P-2}{2}\right) \left(\fu_1+\fd_1\right)^2+(1-2P)\left(\ru+\rd\right)\left(\fu_1+\fd_1\right)=0
\end{equation}
which has two real roots, $\fu_1+\fd_1=0$ and $\fu_1+\fd_1=2\frac{2P-1}{3P-2}\left(\ru+\rd\right)$. It is easy to see that one solution is stable and the other one unstable, depending on the value of $P$. More precisely, we find that the stable one is
\begin{equation}\label{eq:stable-sumf1}
	\fu_1+\fd_1=\begin{cases} 0 & P\geq\frac12\\
					   2\frac{2P-1}{3P-2}\left(\ru+\rd\right) & P<\frac12\end{cases}.
\end{equation}

Since solutions of~\eqref{eq:explicit-discretesys} are non-negative (see Theorem~\ref{th:well.posedness}) the equilibrium distributions of each population are
\[
	\fu_1=\fd_1=0, \quad \text{if } P\geq \frac12.
\]
On the other hand, if $P<\frac12$, since $\rho^{1}$ and $\rho^{2}$ are arbitrary, the equilibrium of the sum is equal to the sum of the steady states of each vehicle class, so that
\[
	\fu_1=2\frac{2P-1}{3P-2}\ru, \quad \fd_1=2\frac{2P-1}{3P-2}\rd
\]
and by substituting these quantities into the equation~\eqref{eq:f1} we find indeed that the equation is satisfied. Moreover, since they are positive provided $P<\frac12$ and the roots of~\eqref{eq:f1} have opposite sign if $P<\frac12$, the stable equilibrium solutions are
\begin{equation}\label{eq:stable-f1}
	\left(\f_r\right)^\p_1=\begin{cases} 0 & P\geq\frac12\\
					   2\frac{2P-1}{3P-2}\rho^\p & P<\frac12\end{cases}, \quad \forall\;\p\in\{1,2\}.
\end{equation}
Thus, no vehicle is in the lowest speed class $I^\p_1$ if $P\geq\frac12$.

For $2 \leq j \leq r$, the equilibrium equation of each population is
\begin{gather*}
	\left(\frac{3P-2}{2}\right) \left(f_j^\p\right)^2 + \left[ (3P-2)\sum_{k=1}^{j-1} f_k^\p + (2P-1)\sum_{k=1}^{j-1} f_k^\q + \left(\frac{3P-2}{2}\right) f_j^\q + (1-2P)\rho^\p - P\rho^\q\right] f_j^\p \\
	+ (1-P)f_j^\q\left(\rho^\p-\sum_{k=1}^{j-1} f_k^\p\right) = 0, \quad \forall\;\p
\end{gather*}
and summing this for $\p=1$ and $\p=2$ we obtain
\[
	\left(\frac{3P-2}{2}\right) \left(\fu_j+\fd_j\right)^2 + \left[(3P-2)\sum_{k=1}^{j-1}\left(\fu_k+\fd_k\right) + (1-2P)\left(\ru+\rd\right)\right]\left(\fu_j+\fd_j\right)=0.
\]
Start from $j=2$. Clearly, for $P\geq\frac12$, substituting the related equilibrium given in equation~\eqref{eq:stable-sumf1}, the stable root is again $\fu_2+\fd_2=0$. For $P<\frac12$, the equation for $\fu_2+\fd_2$, with $\fu_1+\fd_1$ given by~\eqref{eq:stable-sumf1}, becomes
\[
	\left(\frac{3P-2}{2}\right) \left(\fu_2+\fd_2\right)^2 - (1-2P)\left(\ru+\rd\right)\left(\fu_2+\fd_2\right)=0.
\]
Comparing with the equation~\eqref{eq:sumf1}, we see that now the stable root is $\fu_2+\fd_2=0$. Thus, recalling that the solutions of~\eqref{eq:explicit-discretesys} are positive in time, the equilibrium values of each class of vehicles are $\left(\f_r\right)_2^\p\equiv 0$, $\forall\;P\in[0,1]$. Analogously, it is easy to prove that $\left(\f_r\right)_j^\p\equiv 0$, $\forall\;j=3,\dots,r$.

Consider now $r+1 \leq j \leq 2r$, the equilibrium equations $\frac{d}{dt}\fp_j=0$ is computed by using the self- and cross-collision terms given in~\eqref{eq:Jppallj}-\eqref{eq:Jpqmidsmallj1} if $\p=1$ and in~\eqref{eq:Jppallj}-\eqref{eq:Jpqallj2} if $\p=2$. Therefore, the equations will contain four extra terms:
\[
	\frac{P}{2} \left(\fp_{j-r}\right)^2 + P\fp_{j-r}\sum_{k=j-r+1}^{\np} f_k^\p + \frac{P}{2} \fp_{j-r}\fq_{j-r} + P\fp_{j-r}\sum_{k=j-r+1}^{\nq}\fq_k
\]
and they write as
\begin{equation}\label{eq:fmidj}
\begin{split}
	\left(\frac{3P-2}{2}\right)\left(\fp_j\right)^2 + \left[(3P-2)\sum_{k=1}^{j-1}\fp_k + (2P-1)\sum_{k=1}^{j-1}\fq_k+\left(\frac{3P-2}{2}\right)\fq_j+(1-2P)\rho^\p-P\rho^\q\right]\fp_j \\
	+(1-P)\fq_j\left(\rho^\p-\sum_{k=1}^{j-1} \fp_k\right) + P \fp_{j-r}\left[ -\frac12\left(\fp_{j-r}+\fq_{j-r}\right)+\rho^\p+\rho^\q-\sum_{k=1}^{j-r-1}\left(\fp_k+\fq_k\right)\right]=0,\;\forall\;\p.
\end{split}
\end{equation}
If $j=r+1$ and $P\geq\frac12$, then from the previous step $\fp_k=0$, $\forall\;k=1,\dots,r$ and $\forall\;\p$. Therefore, the new terms are certainly zero and the equation is identical to~\eqref{eq:f1}, so $\left(\f_r\right)^\p_{r+1}\equiv 0$ $\forall\;\p$. Instead, if $P<\frac12$, then $\fp_k=0$, $\forall\;k=2,\dots,r$ and $\forall\;\p$, thus the equation~\eqref{eq:fmidj} reduces to
\begin{equation}\label{eq:fr+1}
\begin{split}
	\left(\frac{3P-2}{2}\right)\left(\fp_{r+1}\right)^2 + \left[(3P-2)\fp_1 + (2P-1)\fq_1+\left(\frac{3P-2}{2}\right)\fq_{r+1}+(1-2P)\rho^\p-P\rho^\q\right]\fp_{r+1} \\
	+(1-P)\fq_{r+1}\left(\rho^\p-\fp_1\right) + P \fp_1\left[ -\frac12\left(\fp_1+\fq_1\right)+\rho^\p+\rho^\q\right]=0,\;\forall\;\p.
\end{split}
\end{equation}
Summing this equation for $\p=1$ and $\p=2$
and substituting the expression for $\fu_1+\fd_1$ we obtain
\begin{gather*}
	\left(\frac{3P-2}{2}\right)\left(\fu_{r+1}+\fd_{r+1}\right)^2 + (2P-1)\left(\ru+\rd\right)\left(\fu_{r+1}+\fd_{r+1}\right)\\
	+2P\frac{(P-1)(2P-1)}{(3P-2)^2}\left(\ru+\rd\right)^2=0.
\end{gather*}
The resulting stable equilibrium solution is $\fu_{r+1}+\fd_{r+1}=\left(\ru+\rd\right)\frac{(1-2P)-\sqrt{\Delta_{r+1}}}{3P-2}$ where $\Delta_{r+1}=(2P-1)\left[(2P-1)-\frac{4P(P-1)}{3P-2}\right]$ is positive provided $P<\frac12$. The equilibria of population $\p$ can be found recalling that they depend only on the quantity $\rho^\p$, so that
\[
	\fu_{r+1}=\ru\frac{(1-2P)-\sqrt{\Delta_{r+1}}}{3P-2}, \quad \fd_{r+1}=\rd\frac{(1-2P)-\sqrt{\Delta_{r+1}}}{3P-2}
\]
and it can be checked that these are the positive solutions of~\eqref{eq:fr+1}. Thus, if $j=r+1$ the stable equilibrium values are
\[
	\left(\f_r\right)^\p_{r+1}=\begin{cases} 0 & P\geq\frac12\\
							   \rho^\p\frac{(1-2P)-\sqrt{\Delta_{r+1}}}{3P-2} & P<\frac12
					 \end{cases}, \quad \forall\;\p.
\]
If, instead, $r+1 < j \leq 2r$, then $\fp_{j-r}\equiv 0$, $\forall\;P\in[0,1]$ and $\forall\;\p$, so the equilibrium equation resulting from $\frac{d}{dt}\left(\fu_j+\fd_j\right)=0$ is identical to~\eqref{eq:sumf1} if $P\geq\frac12$, while it is
\[
	\left(\frac{3P-2}{2}\right)\left(\fu_j+\fd_j\right)^2-\sqrt{\Delta_{r+1}}\left(\ru+\rd\right)\left(\fd_j+\fu_j\right)=0
\]
if $P<\frac12$. Then, we obtain $\left(\f_r\right)_j^\p \equiv 0$, for $j=r+2,\dots,2r$ and $\forall\;P\in[0,1]$.

Clearly, this procedure can be repeated, in fact, the cases $j=1,\dots,2r$ that we just computed are typical. Also for larger values of $j$ we find a quadratic equation for the unknown $f_j^\p$, which involves only previously equilibrium values, hence we can easily find successively all components of $\left(\f_r\right)^\p$, $\forall\;\p$. The values of the asymptotic solutions are listed in Appendix~\ref{app:equilibria}.
\end{proof}

\begin{figure}[t!]
\centering
\includegraphics[width=0.45\textwidth]{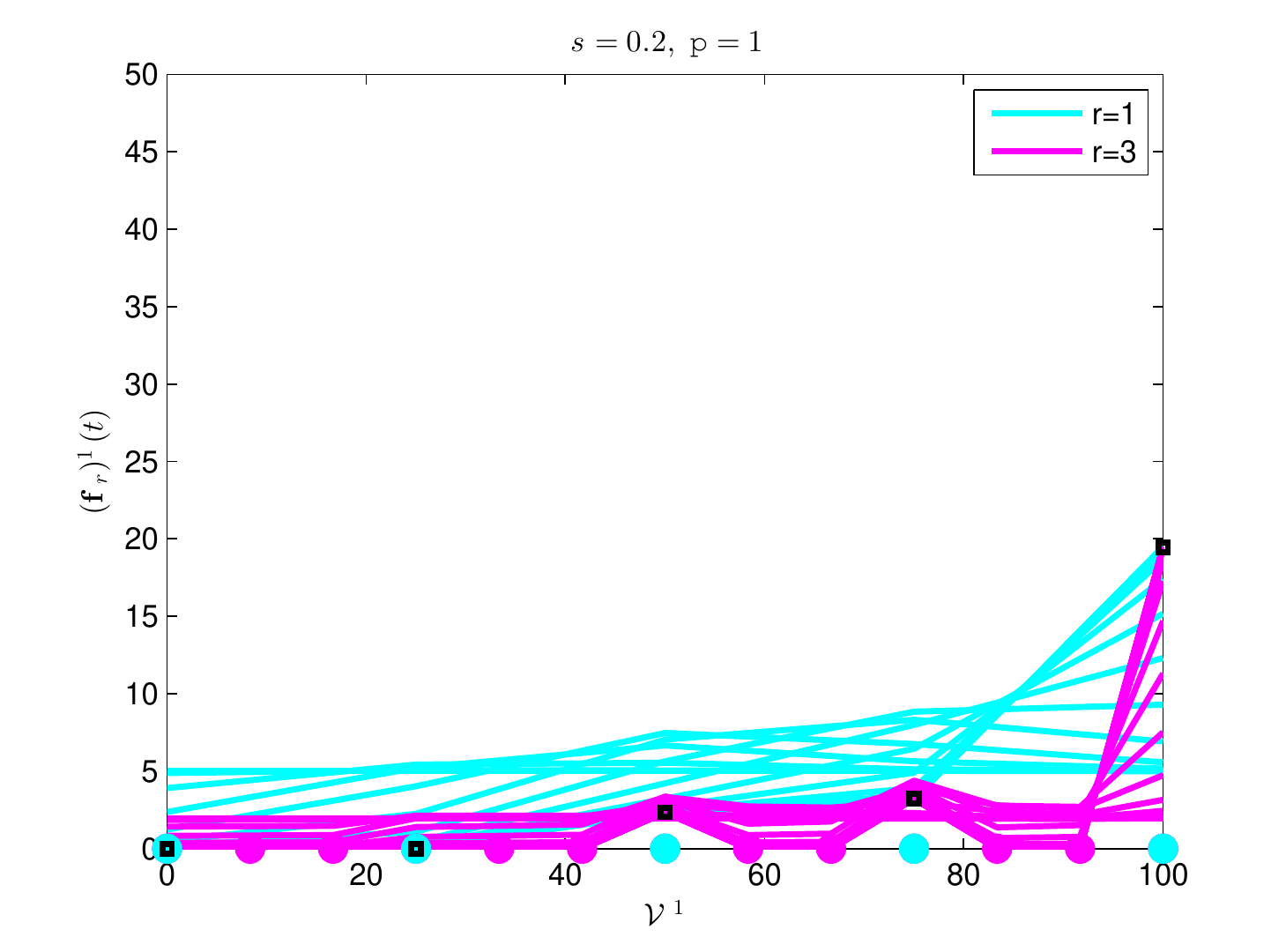}
\includegraphics[width=0.45\textwidth]{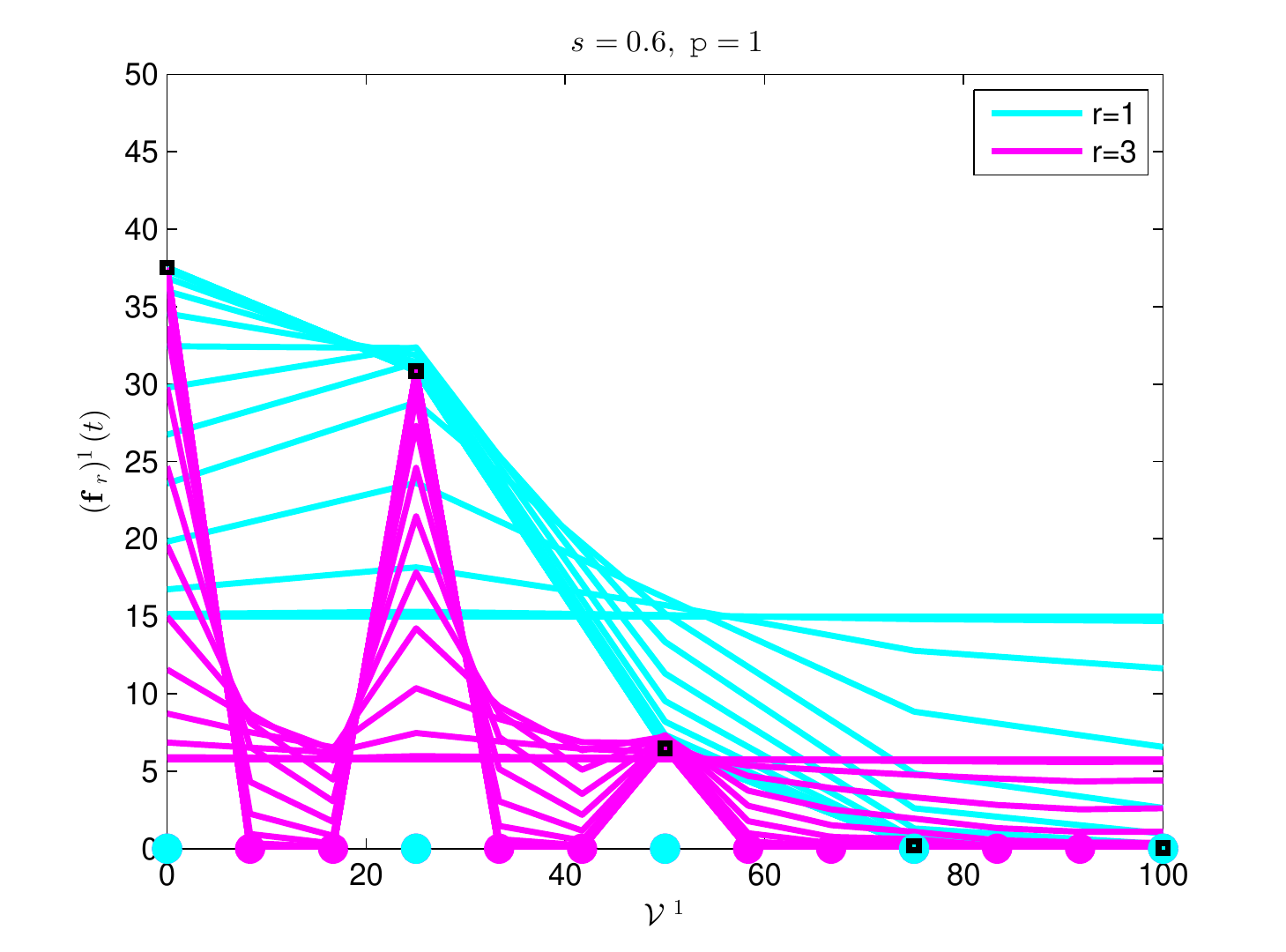}
\includegraphics[width=0.45\textwidth]{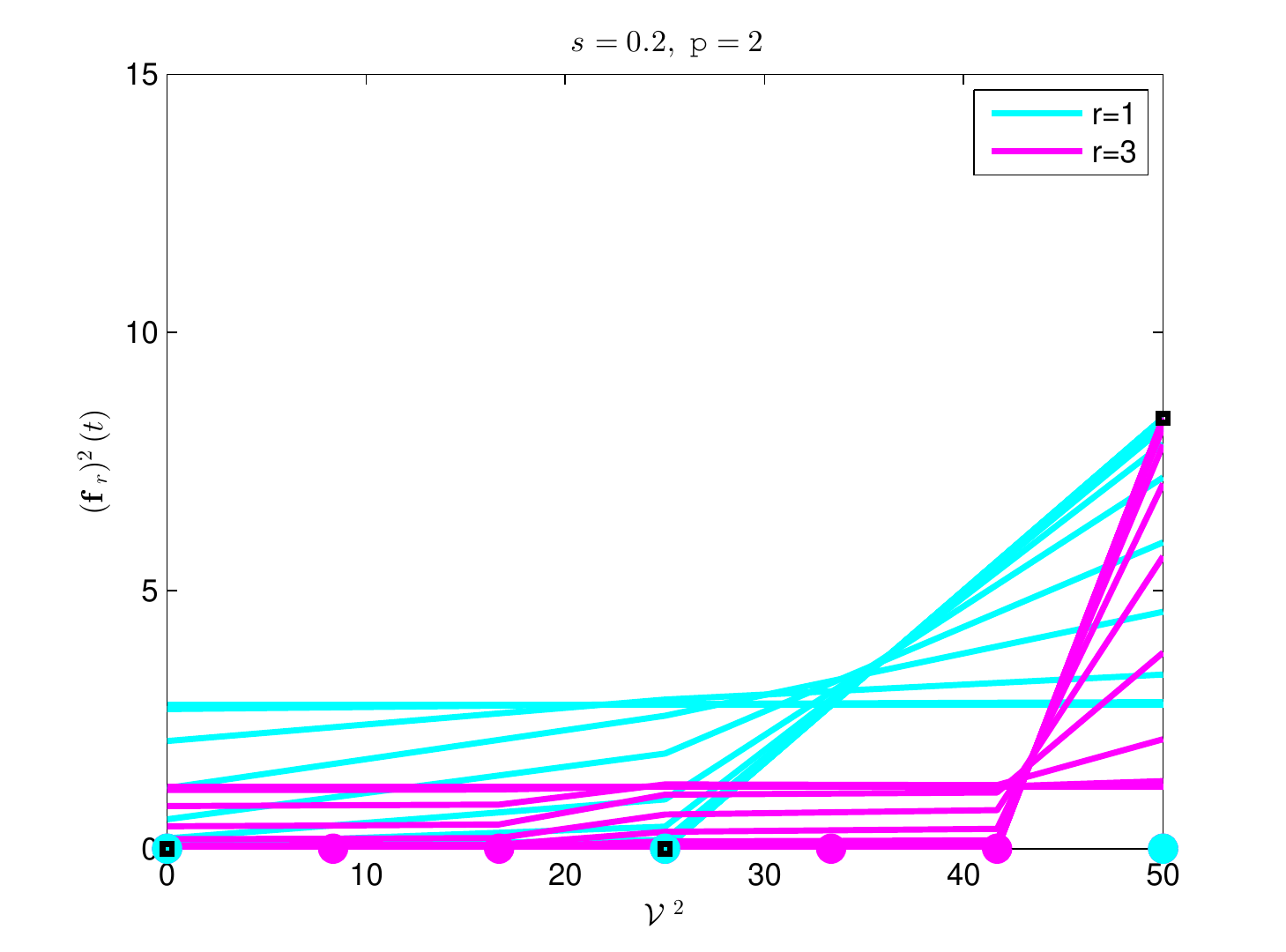}
\includegraphics[width=0.45\textwidth]{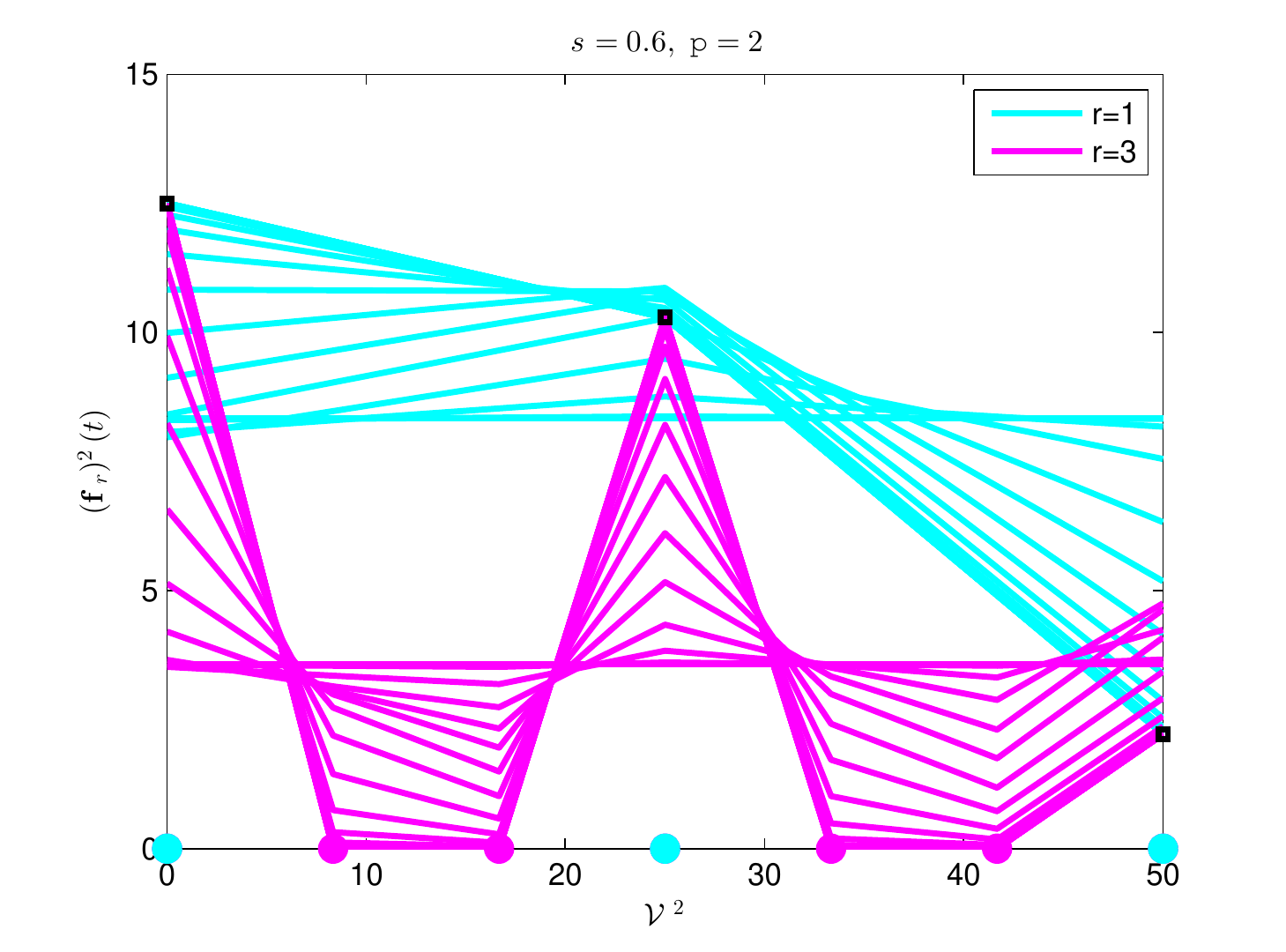}
\caption{Evolution towards equilibrium of the ODE system~\eqref{eq:explicit-discretesys} for the case of two populations, $\p=1$ (top panels) and $\p=2$ (bottom panels), with a fixed value of the velocity jump $\Dv=25~\unit{km/h}$. The maximum velocities are $\vm^{1}=100~\unit{km/h}$ and $\vm^{2}=50~\unit{km/h}$. The velocity grid is obtained with $r=1$ (cyan) and $r=3$ (magenta), which correspond to $n^{1}=5,13$ and $n^{2}=3,7$ grid points. Black squares indicate the equilibrium values.\label{fig:equilibrium}}
\end{figure}

Theorem~\ref{th:equilibria} ensures that the equilibrium values of the discretized model~\eqref{eq:explicit-discretesys} can be found on the coarser grid, namely taking $r=1$. The result can be better appreciated by looking at the evolution towards equilibrium shown in Figure~\ref{fig:equilibrium}. In this figure, we consider two populations such that $\vm^{1}=100~\unit{km/h}$ and $\vm^{2}=\vm^{1}-2\Dv=50~\unit{km/h}$. Then, $T^{1}=4$ and $T^{2}=2$. The different plots show the evolution towards equilibrium, starting from uniform initial distributions for $r=1$ (cyan) and $r=3$ (magenta), which correspond to $\np=5,13$ velocity grid points for the population $\p=1$ (top panels) and $\np=3,7$ velocity grid points for the population $\p=2$ (bottom panels). The left plots are obtained with the fraction of occupied space $s=0.2$, while the right plots with $s=0.6$. Note that different dynamics towards equilibrium are observed, for different values of the number $\np$, $\p=1,2$, of grid points, but as equilibrium is approached, the values of the equilibria go to zero, except for the cells $I_j^\p$, $\p=1,2$, corresponding to integer multiples of $\Dv$. In fact, for $r=3$, the nonzero equilibrium values, marked with black squares, are related to the velocities of the grid with $r=1$ and marked with cyan filled circles. While the additional velocities of the refined grid, marked with magenta filled circles, correspond to zero values of the equilibria.

\begin{remark}[$\Dv$ depending on $\p$]
	In Section \ref{sec:discrete-model} we have assumed that the speed jump $\Dv$ is independent of $\p$ so that it is a fixed parameter for all classes of vehicles. Taking the velocity jump dependent on $\p$ would mean to consider another microscopic difference characterizing the types of vehicles. More precisely, we can use it in order to model the subjective behavior of drivers, since we can think of $\Dv$ as a parameter describing different types of drivers, more or less aggressive depending on the value of the jump in velocity.\\Even with $\Dv$ dependent on $\p$, equilibrium solutions preserve the property of being quantized that is non-zero on a reduced number of discrete velocities. Let us consider the same example shown in the right panels of Figure \ref{fig:equilibrium} with two populations, $\p=1,2$, having maximum velocities $\vm^{1}=100~\unit{km/h}$ and $\vm^{2}=50~\unit{km/h}$. Now we take two different velocity jumps, $\Dv^1=20~\unit{km/h}$ and $\Dv^2=10~\unit{km/h}$ in order to model the case in which population $\p=1$ is more aggressive than population $\p=2$. We choose the discretization parameters $r^1$ and $r^2$ in such a way that the velocity grids of the two populations have the same spacing $\dv=\vm^1/r^1=\vm^2/r^2$. This means that we have to take $r^1=2r^2$.
	\begin{figure}[t!]
		\centering
		\includegraphics[width=0.45\textwidth]{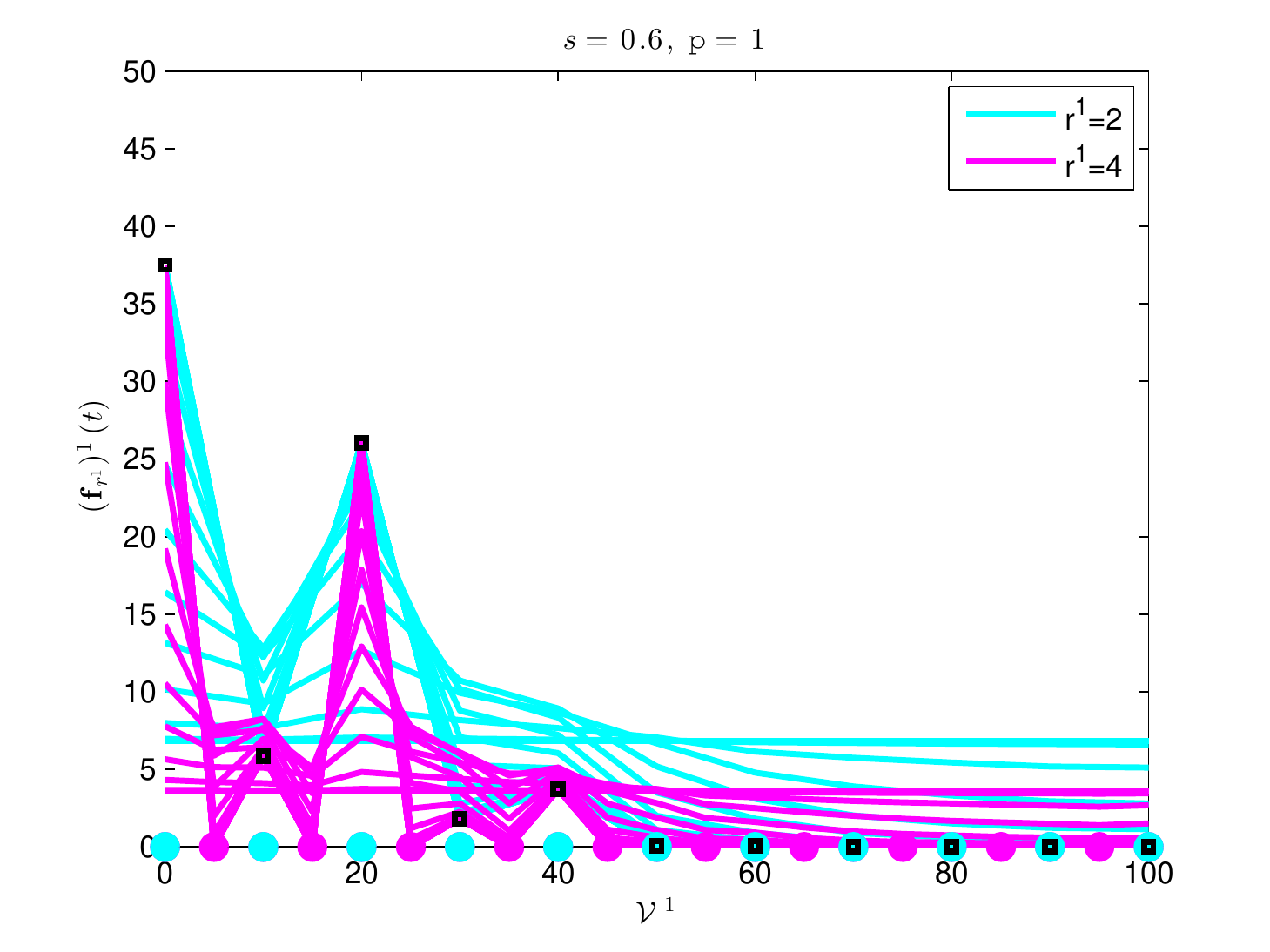}
		\includegraphics[width=0.45\textwidth]{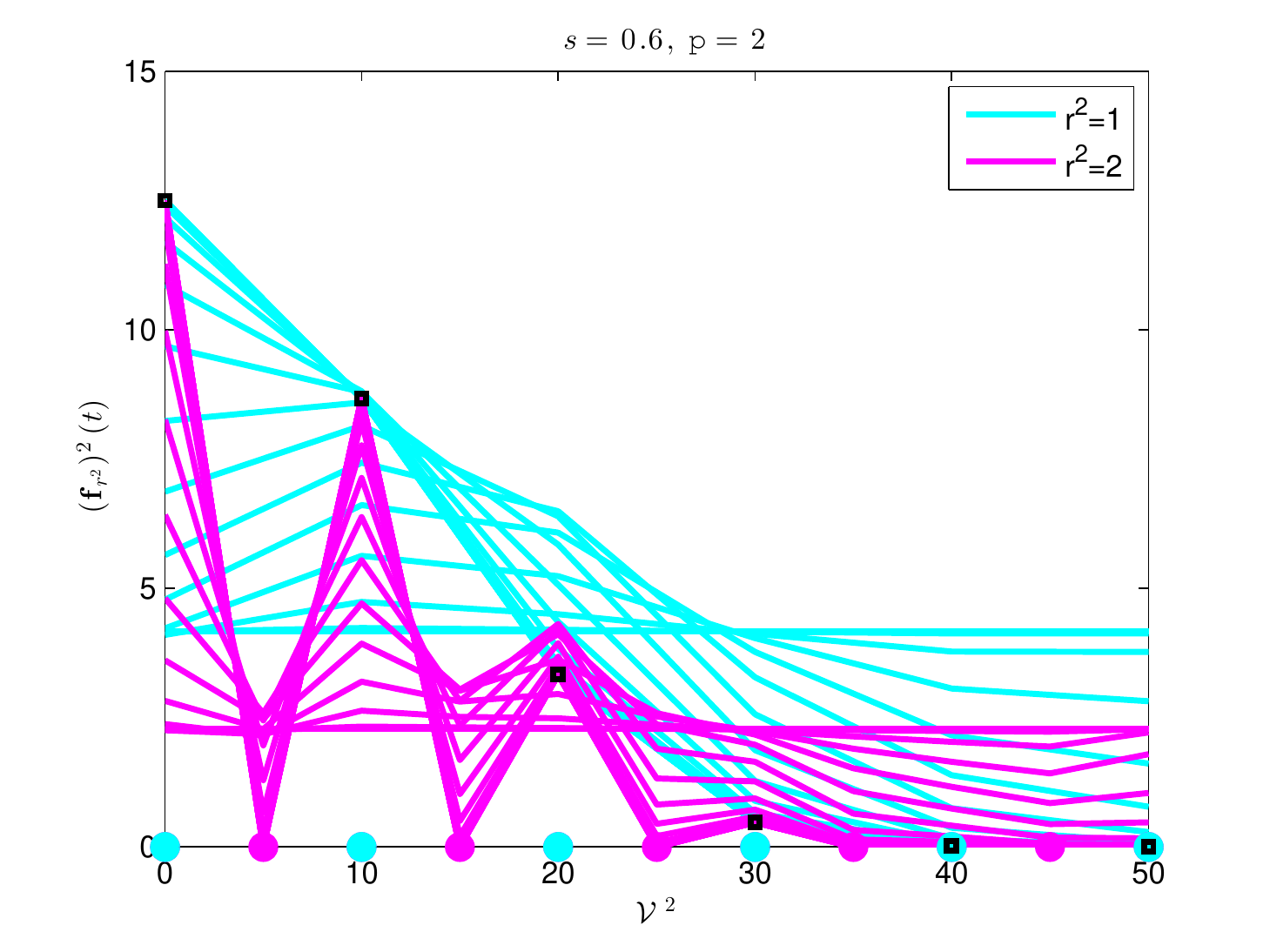}
		\caption{Evolution towards equilibrium of the ODE system~\eqref{eq:explicit-discretesys} for the case of two populations, $\p=1$ (left panels) and $\p=2$ (right panels), with two different values of the velocity jump $\Dv^1=20~\unit{km/h}$ and $\Dv^2=10~\unit{km/h}$. The maximum velocities are $\vm^{1}=100~\unit{km/h}$ and $\vm^{2}=50~\unit{km/h}$. The velocity grid is obtained with $r^1=2$, $r^2=1$ (cyan) and $r^1=4$, $r^2=2$ (magenta). Black squares indicate the equilibrium values.\label{fig:equilibrium:differentDV}}
	\end{figure}
	In Figure \ref{fig:equilibrium:differentDV} we show the evolution towards equilibrium for both populations $\p=1$ (left plot) and population $\p=2$ (right plot). We start from uniform initial distributions and we consider two cases: first we choose $r^2=1$ (and therefore $r^1=2$) (cyan), then we choose $r^2=2$ (and therefore $r^1=4$) (magenta). These values correspond to $n^1=13,25$ and $n^2=6,11$ velocity grid points. Again we observe that equilibria are quantized. Precisely only the values related to the cells $I_j^\p$, $\p=1,2$, corresponding to integer multiples of $\Dv^2$, i.e. the minimum of the two jumps, give a non-zero contribution irrespective of grid refinements. In fact, the equilibrium values of population $\p=1$ are quantized according to the velocity jump of population $\p=2$.\\Notice that in this case $\Dv^1$ is an integer multiple of $\Dv^2$ but the same phenomenon verifies in a more general situation.
\end{remark}

\begin{remark}[Unstable equilibria] Theorem~\ref{th:equilibria} gives the uniqueness of the stable equilibria of the
model. However, unstable ones may occur if the initial condition is such that $\fp_1(0) = 0$, $\forall\;\p$. In fact, the interaction rules in the case $\vb>\va$ do not allow for a post-interaction velocity $v\in\Vp$ which is lower than $\va$. Thus if $\fp_1(0) = 0$, i.e. there are no vehicles of the $\p$-class with velocity $v_1$ at the initial time, interactions will not lead to an increase of $\fp_1$. In this sense, the equilibrium solution of the multi-population model does not only depend on the initial densities, but also on the initial condition $\fp(0,v)$ because ``spurious'' equilibria on sub-manifolds of the state space may appear. However, as showed in~\cite{PgSmTaVg2}, these solutions are unstable: a small perturbation of $\fp_1(0)$ is enough to trigger the evolution towards the stable equilibrium, which depends only on the initial densities.
\end{remark}


\section{Fundamental diagrams of traffic}
\label{sec:funddiag}

In this Section we present the fundamental diagrams obtained with the multi-population kinetic model described in this work. We focus on the case of a mixture composed of three types of vehicles. Such diagrams provide the relation between flux and density at the macroscopic level.

Macroscopic variables for the $\p$-class of vehicles are recovered computing moments of the distribution functions $\fp$, see equations~\eqref{eq:macro-var}. However, fundamental diagrams are obtained by assuming that traffic is in equilibrium. Therefore, we compute the quantities~\eqref{eq:macro-var} using the asymptotic distributions $\left(\f_r\right)^\p$. Since for each population $\p$ the $\left(\f_r\right)^\p$'s depend uniquely on the densities of all vehicle classes, at equilibrium we have that the flux is a function of the initial densities and it is computed as
\begin{align}\label{eq:discrete-flux}
	q^\p(r)&=\int_{\Vp} v \left(\fp\right)^{\infty}(v) dv \approx \int_{\Vp} v \sum_{j=1}^{\np} \left(\f_r\right)^\p_j \frac{\chi_{I_j^\p}(v)}{\abs{I_j^\p}}\nonumber\\
	&=\left(\f_r\right)_1^\p v_1^\p + \sum_{j=2}^{\np-1} \left(\f_r\right)_j^\p v_j^\p + \left(\f_r\right)_{\np}^\p v_{\np}^\p
\end{align}
where the velocities $v_j^\p$ are the mid points of the cells $I_j^\p$ resulting with the grid size $\dv=\frac{\Dv}{r}$ and the $\left(\f_r\right)_j^\p$'s are the equilibrium solutions. We recall that the $v_j^\p$'s for $j=2,\dots,\np$ do not change when the grid is refined, i.e. when $r$ increases, while the first and last velocity grid point approach zero and the maximum velocity of the $\p$-class, respectively, when $r\to\infty$. Recall that Theorem~\ref{th:equilibria} states that the equilibrium values of the discretized model do not depend on the discretization, i.e. on the grid size $\dv$ and on the discrete speeds $v_j^\p$'s, but only on the velocity jump $\Dv$. Therefore, the flux can be computed exactly using the values $\left(\f_1\right)^\p_j$, $j=1,\dots,T^\p+1=\np$. In other words, for the case of two populations we can exploit the explicit formulas for the equilibrium solutions given in Theorem~\ref{th:equilibria} and in Appendix~\ref{app:equilibria}. Instead, for more than two populations we numerically integrate the system~\eqref{eq:explicit-discretesys} on the coarser velocity grid ($r=1$) to recover the equilibrium values and using the results of Theorem~\ref{th:equilibria} we obtain the exact flux as
\begin{equation}\label{eq:exact-flux}
	q^\p(\infty)=\sum_{j=1}^{T^\p+1} \left(\f_1\right)^\p_j (j-1)\Dv.
\end{equation}
Notice that this result is due to the quantized structure of the equilibria, which is in turn a consequence of the assumptions in Section~\ref{sec:discrete-model}. Finally, once the flux is given, the macroscopic speed of the $\p$-class can be obtained as
\begin{equation}\label{eq:speed}
	u^\p(\infty)=\frac{q^\p(\infty)}{\rho^\p}.
\end{equation}
Since in the space homogeneous case each density $\rho^\p$ is constant in time, the fraction of occupied space $s$ remains also constant. Hence we study the total flux $Q=\sum_{\p} q^\p$ and the mean speed $U=\frac{\sum_{\p} q^\p}{\sum_{\p} \rho^\p}$ at equilibrium as functions of the total number of vehicles per unit length $N_{\sf v}=\sum_{\p} \rho^\p$ and of the fraction of occupied space $s$.

In the case of the single population model~\cite{PgSmTaVg2}, the flux and the speed at equilibrium are single-valued functions of the initial density. However, this property does not reflect the structure of the fundamental diagrams provided by experimental data, because such diagrams are multivalued with a wide dispersion of the flux values in the congested phase of traffic, i.e. at high densities. Here, the scattered behavior of the real data will be recovered because traffic is treated as a mixture of more than one population characterized by different physical features. In fact, note that the equilibria related to the $\p$-population, and showed in Theorem~\ref{th:equilibria}, do not only depend on the initial density $\rho^\p$ but also on the values of the occupied space $s$. Thus, since the same value of $s$ can be obtained with different compositions of the traffic mixture, for a given $s\in[0,1]$ we may find different equilibria, hence different flux and speed values at equilibrium, depending on how the road is occupied.

\begin{table}[!t]
\begin{center}
\begin{tabular}{l|c|c|c|c}
& Fast Cars & Slow Cars & Vans & Trucks \\
& ($\p=\Cf$) & ($\p=\Cs$) & ($\p=V$) & ($\p=T$) \\
\hline
\hline
Typical length $l^\p$ & $4~\unit{m}$ & $4~\unit{m}$ & $6~\unit{m}$  & $12~\unit{m}$\\
Max.\,density $\rho^\p_{\max}$ & $250~\unit{veh./km}$ & $250~\unit{veh./km}$ & $166.6~\unit{veh./km}$ & $83.3~\unit{veh./km}$\\
Max.\,velocity $\vmp$ & $120~\unit{km/h}$ & $80~\unit{km/h}$ & $120~\unit{km/h}$ & $80~\unit{km/h}$\\
Velocity jump $\Dv$ & $40~\unit{km/h}$ & $40~\unit{km/h}$ & $40~\unit{km/h}$ & $40~\unit{km/h}$\\
\hline
\end{tabular}
\end{center}
\caption{Physical parameters of the four classes of vehicles chosen for the simulations.\label{tab:physical-parameters}}
\end{table}

In the following, we investigate the properties of the diagrams provided by the multi-population kinetic model. We show that they exhibits different regimes, or phases, of traffic and they reproduce the qualitative structure of experimental diagrams widely analyzed in~\cite{klarReview,FermoTosin14,kerner2004BOOK,PgSmTaVg}.

We introduce four typical classes of vehicles whose characterizing parameters are listed in Table~\ref{tab:physical-parameters} and all simulations are performed by choosing three of them. More precisely we consider {\em Fast Cars-Slow Cars-Trucks}~($\Cf$-$\Cs$-$T$) or {\em Fast Cars-Vans-Trucks}~($\Cf$-$V$-$T$). The diagrams are computed by sampling three random values of the initial densities for any initial fraction of occupied space $s\in[0,1]$. Moreover, recalling the computations already made in~\cite{PgSmTaVg2} to evaluate a physical velocity jump $\Dv$, here we can consider $\Dv=40~\unit{km/h}$. With this choice, the numbers of discrete velocities are $n^{\Cf}=n^{V}=4$ and $n^{\Cs}=n^{T}=3$.

\begin{description}
\item[Free phase of traffic] This traffic regime occurs at low densities, when there is a large distance between vehicles and the interactions are rare. Thus, we expect that the velocity of vehicles is ruled by the maximum allowed speed, which in this framework depends on the mechanical characteristics of the vehicles (e.g. when we assume $\vm^{\Cf}>\vm^{T}$) or on the type of drivers (e.g., when we assume that there are two types of cars such that $\vm^{\Cf}>\vm^{\Cs}$). Therefore, in the free phase of traffic the flux increases nearly linearly with respect to the total density, the data are not widely scattered and are contained in a cone whose upper and lower branch have a slope proportional to $\max_{\p}\{\vmp\}$ and $\min_{\p}\{\vmp\}$ respectively, $\forall\;\p\in\{\Cf,\Cs,V,T\}$.
\begin{figure}[t!]
\centering
\includegraphics[width=0.45\textwidth]{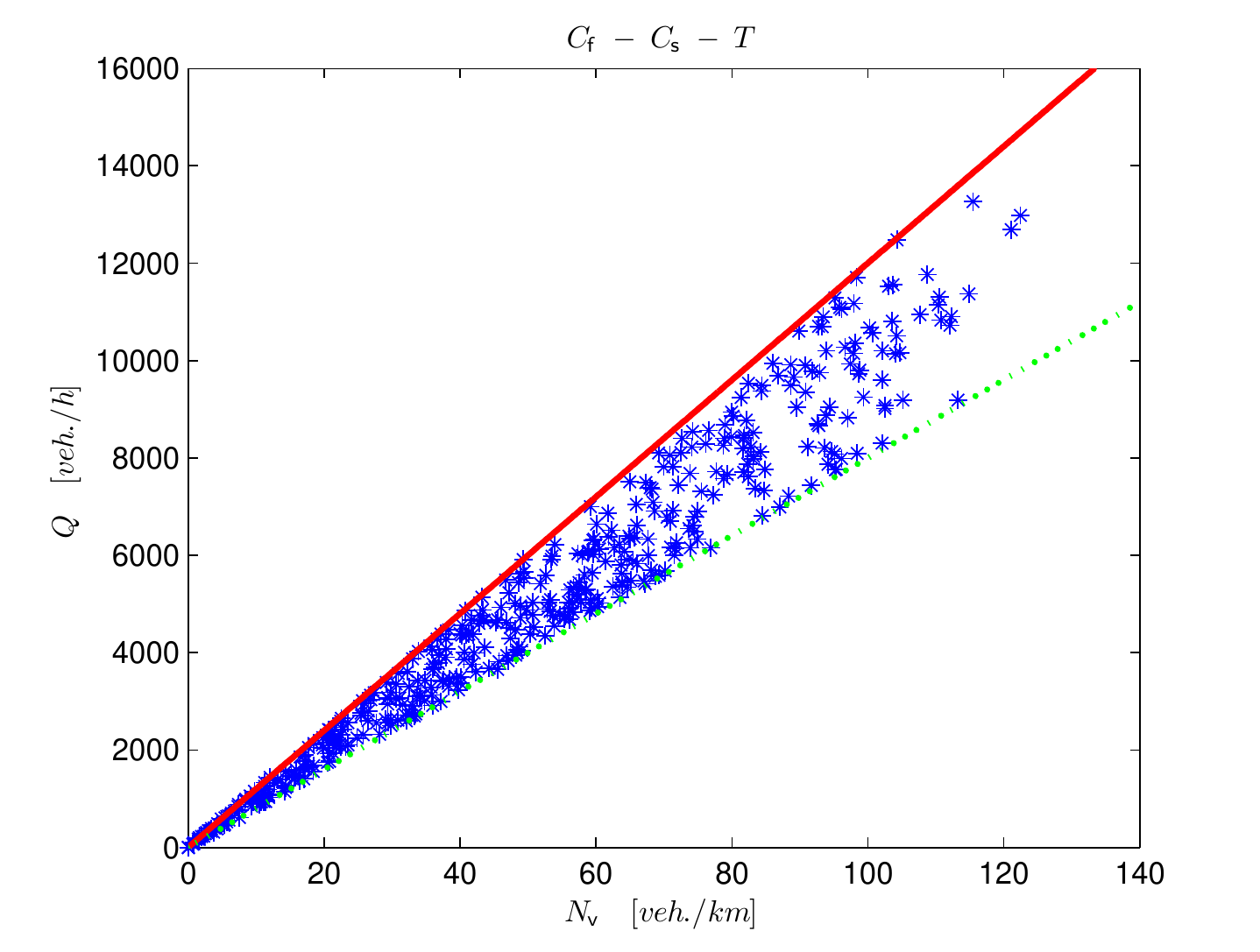}
\includegraphics[width=0.45\textwidth]{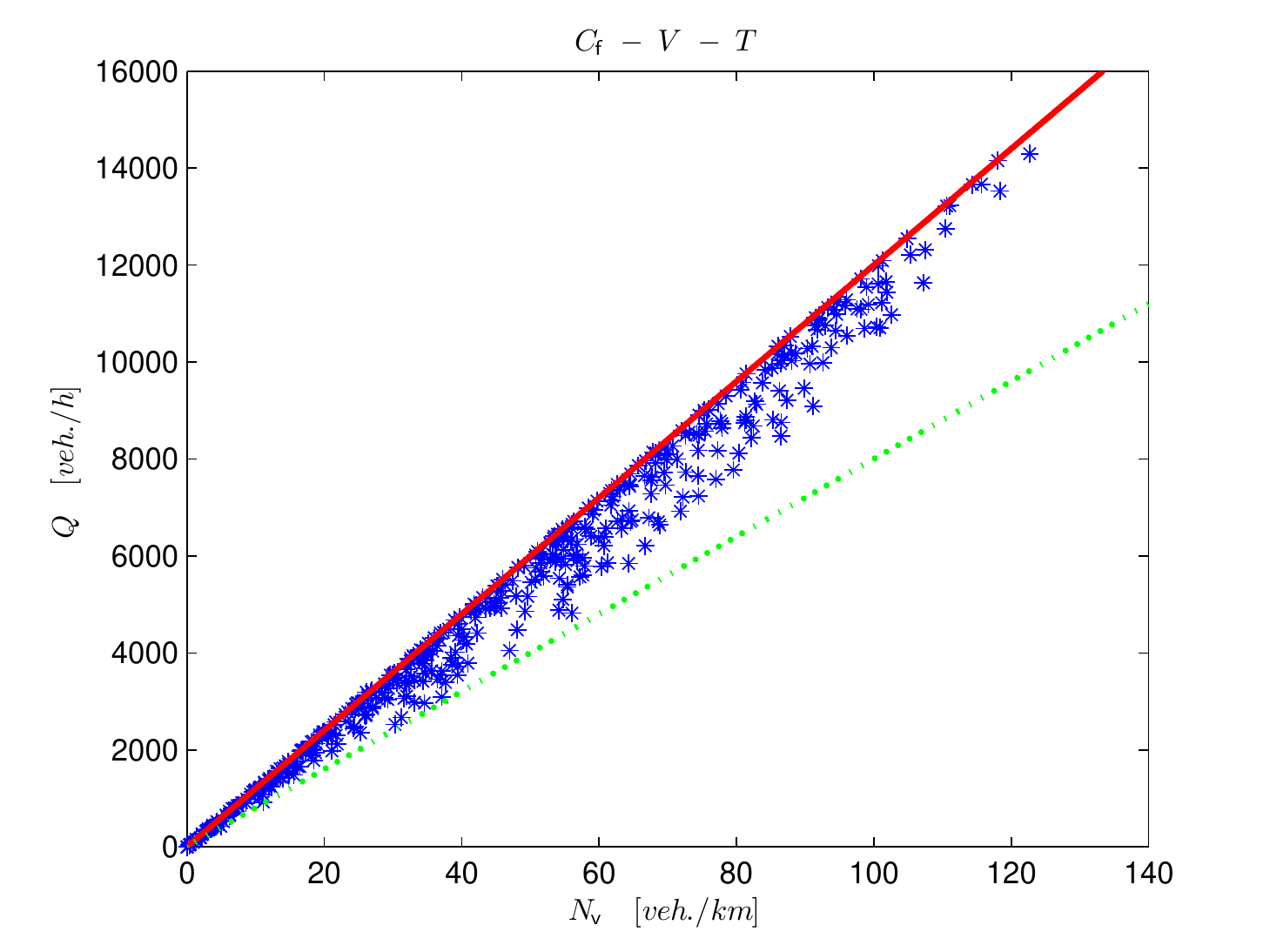}
\\
\includegraphics[width=0.45\textwidth]{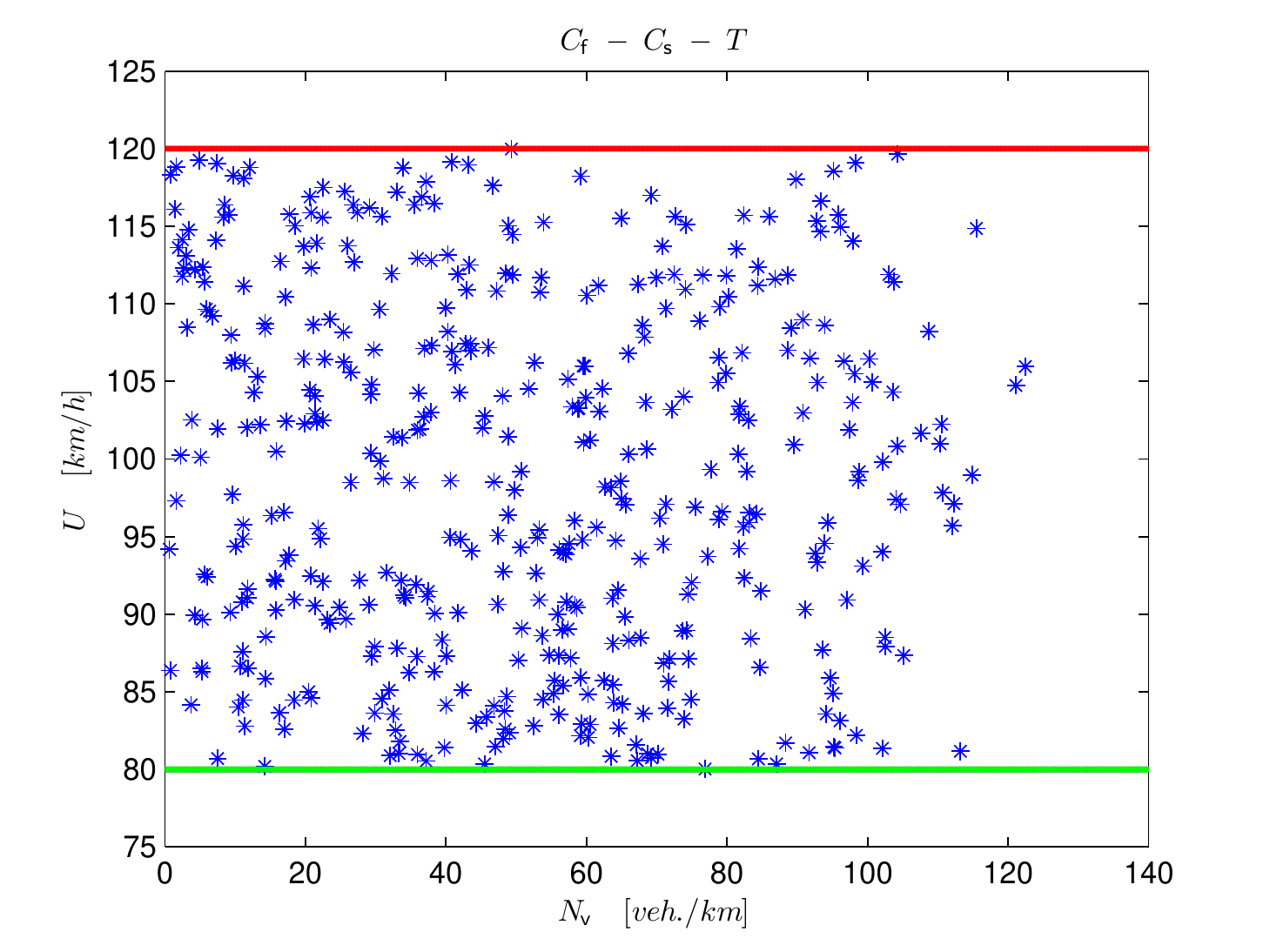}
\includegraphics[width=0.45\textwidth]{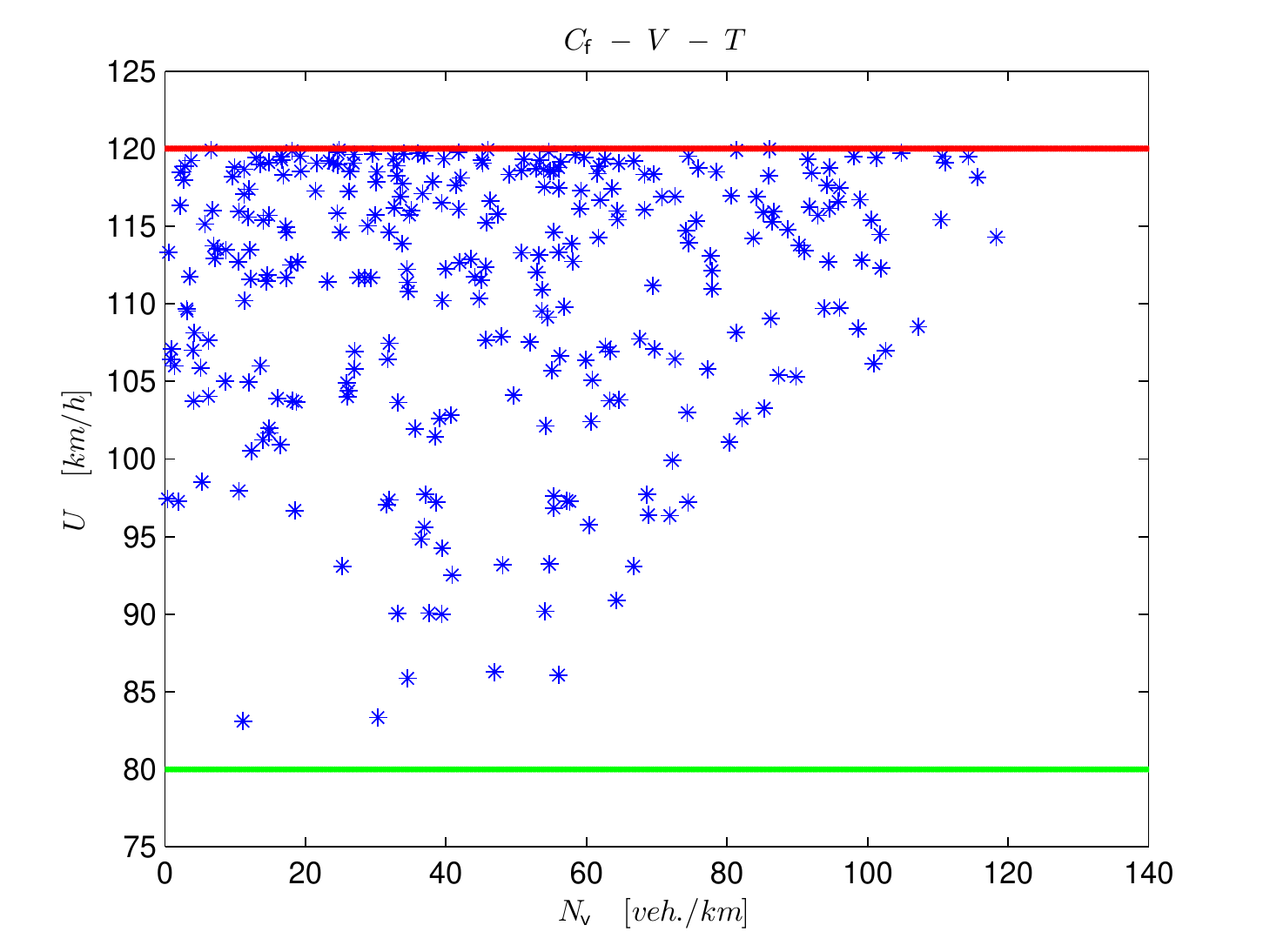}
\caption{Top: free phase of the flux-density diagrams. Bottom: free phase of the speed-density diagrams. On the left we consider the three populations $\Cf$-$\Cs$-$T$, on the right $\Cf$-$V$-$T$. The data are obtained for values of the fraction of road occupancy such that $P\geq\frac12$. In the top panels the solid red line and the dotted green line have slope respectively as the maximum velocity and the minimum velocity of the three populations. The probability $P$ is taken as in~\eqref{eq:gamma-law} with $\gamma=1$.\label{fig:free-phase}}
\end{figure}
For instance, in Figure~\ref{fig:free-phase} we show the free phase of the diagrams provided by the multi-population model with three classes of vehicles. This regime is obtained by taking only the values of the fraction of road occupancy for which $P\geq\frac12$. In fact, as proved in Theorem~\ref{th:equilibria} and as shown in the left panels of Figure~\ref{fig:equilibrium}, this choice produces equilibria of the form
\[
	\f^{\Cf}=\underbrace{[0,0,*,*]}_{n^{\Cf}=4}, \quad \f^{\Cs}=\underbrace{[0,0,\rho^{\Cs}]}_{n^{\Cs}=3}, \quad \f^{V}=\underbrace{[0,0,*,*]}_{n^{V}=4}, \quad \f^{T}=\underbrace{[0,0,\rho^T]}_{n^{T}=3}.
\]
Thus, all classes of vehicles travel at high velocities and in particular the flux of slow cars and trucks is always proportional to the their maximum velocity $\vm^{\Cs}$ and $\vm^{T}$ respectively. Instead, the flux values of fast cars and vans depend also on the maximum velocity of slow cars and trucks. In the left panel of Figure~\ref{fig:free-phase} we consider the test case $\Cf$-$\Cs$-$T$ and we observe that
the flux values obtained are scattered in the whole cone,
in contrast with the case $\Cf$-$V$-$T$ shown in the right panels in which both fast cars and vans travel at speed $120~\unit{km/h}$ and therefore the flux values are mainly distributed on the upper branch. This consideration can be reinforced by looking at the macroscopic speed diagrams, bottom panels of Figure~\ref{fig:free-phase}.

\item[Phase transition] It represents the transition between the free and the congested phase of traffic. The value of the fraction of occupied space $s$ at which the phase transition occurs is called {\em critical value}. The flux is maximum when this value is reached. If $s$ increases then we observe a decrease of the flux and of the mean speed in the diagrams of traffic. From a mathematical point of view, the phase transition occurs when there is the bifurcation of the equilibrium values, that is when $P$ becomes smaller than $\frac12$. In fact, when $P\geq\frac12$ all vehicles are moving and only when $P<\frac12$ the lower speed classes begin to fill up, see equation~\eqref{eq:stable-f1} in Theorem~\ref{th:equilibria}. Since $P$ is a function of $s$, the choice of the probability $P$ influences the critical value. In order to investigate this phenomenon, in Figure~\ref{fig:s-diagrams} we consider the total diagram of the flux with respect to the fraction of occupied space $s$.
\begin{figure}[t!]
\centering
\includegraphics[width=0.45\textwidth]{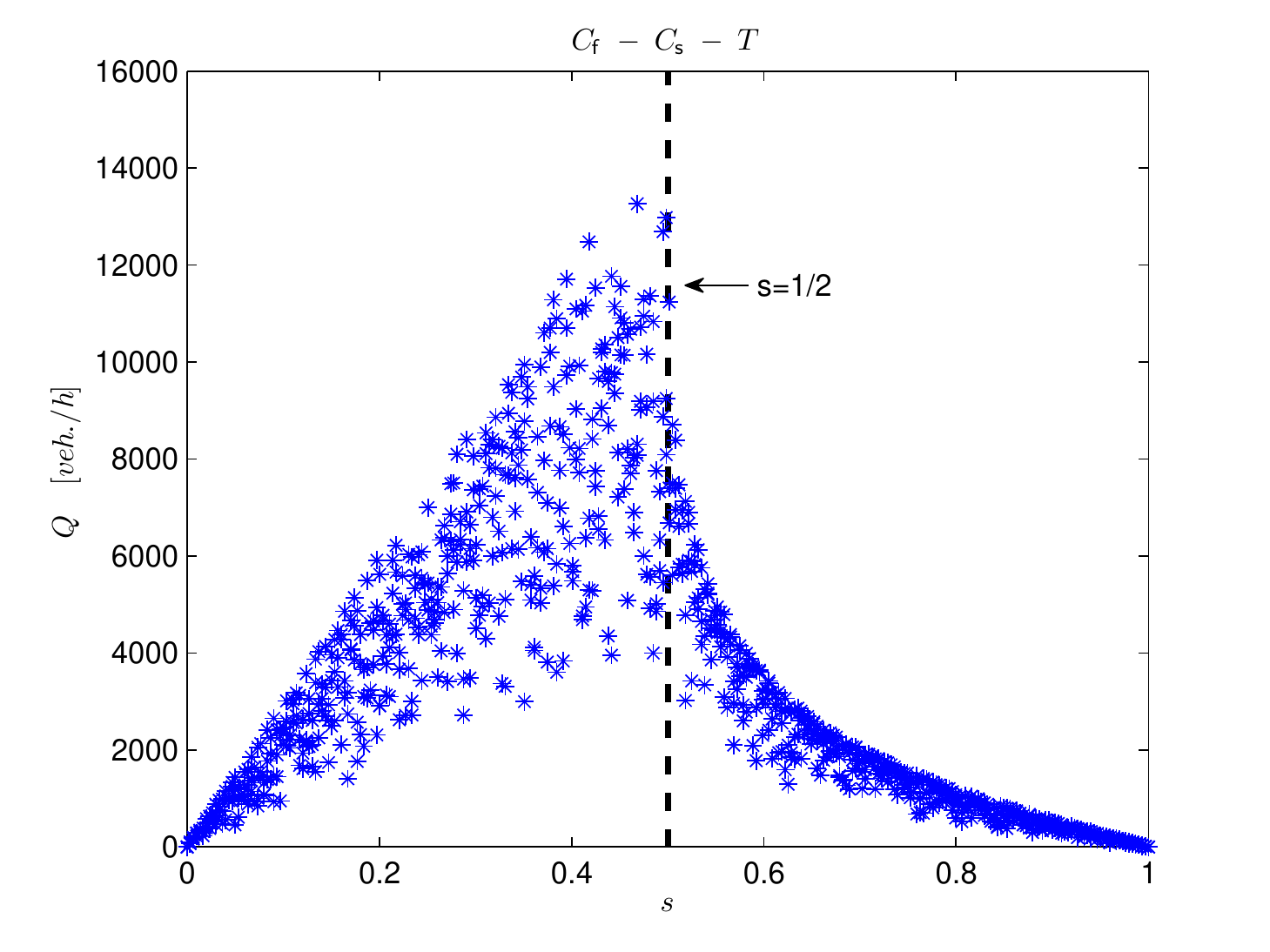}
\includegraphics[width=0.45\textwidth]{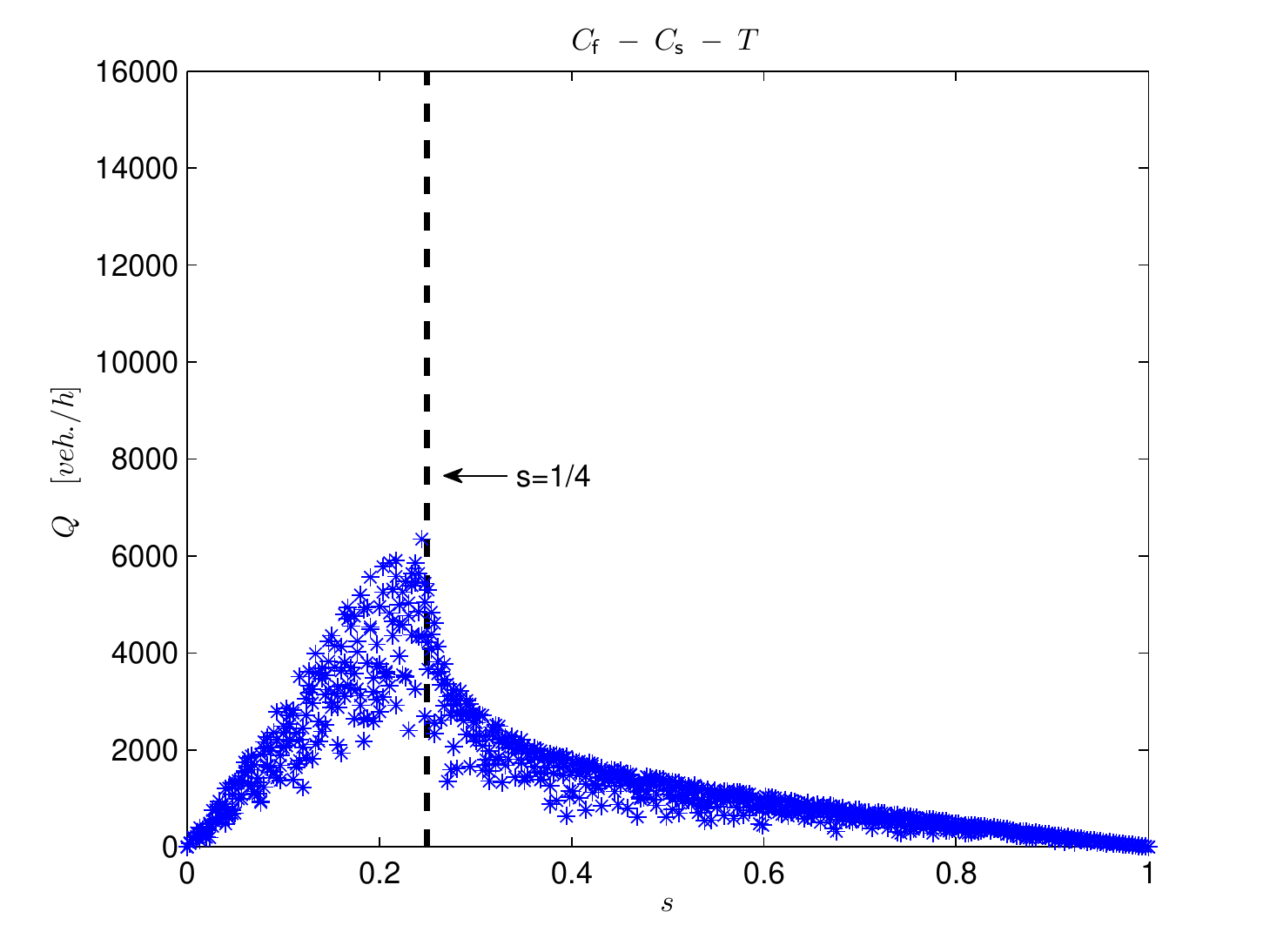}
\caption{Diagrams of the flux vs. the fraction of occupied space for the test case $\Cf$-$\Cs$-$T$. The probability $P$ is taken as in~\eqref{eq:gamma-law} with $\gamma=1$ (left) and $\gamma=\frac12$ (right).\label{fig:s-diagrams}}
\end{figure}
In both panels we consider the law given in~\eqref{eq:gamma-law}, but we choose two different values of $\gamma\in(0,1]$: more precisely, $\gamma=1$ and $\gamma=\frac12$ in the left and right plot, respectively. Note that the critical value of the fraction of occupied space decreases from $s=\frac12$ to $s=\frac14$. In fact, with~\eqref{eq:gamma-law}
\[
	P<\frac12 \quad \Longleftrightarrow \quad s>\left(\frac12\right)^{\frac{1}{\gamma}}
\]
and this means that high values of $\gamma$ increase the value of $s$ at which the transition between the two regimes of traffic occurs. Thus, $\gamma>1$ is not a good choice because it does not reflect the structure of the phase transition usually observed in the experimental diagrams.

\item[Congested phase of traffic] This traffic regime occurs at high densities, that is when the fraction of occupied space $s$ exceeds the critical value. The congested phase is characterized by frequent interactions among vehicles which are forced to slow down, i.e. they cannot travel at the same high speeds as in free road conditions, because traffic becomes more and more jammed. As a consequence, the flux decreases as the fraction of occupied space increases and the experimental diagrams exhibit a large scattering of the flux values. In the congested phase, therefore, the flux can hardly be approximated by a single-valued function of the density.
\begin{figure}[t!]
\centering
\includegraphics[width=0.45\textwidth]{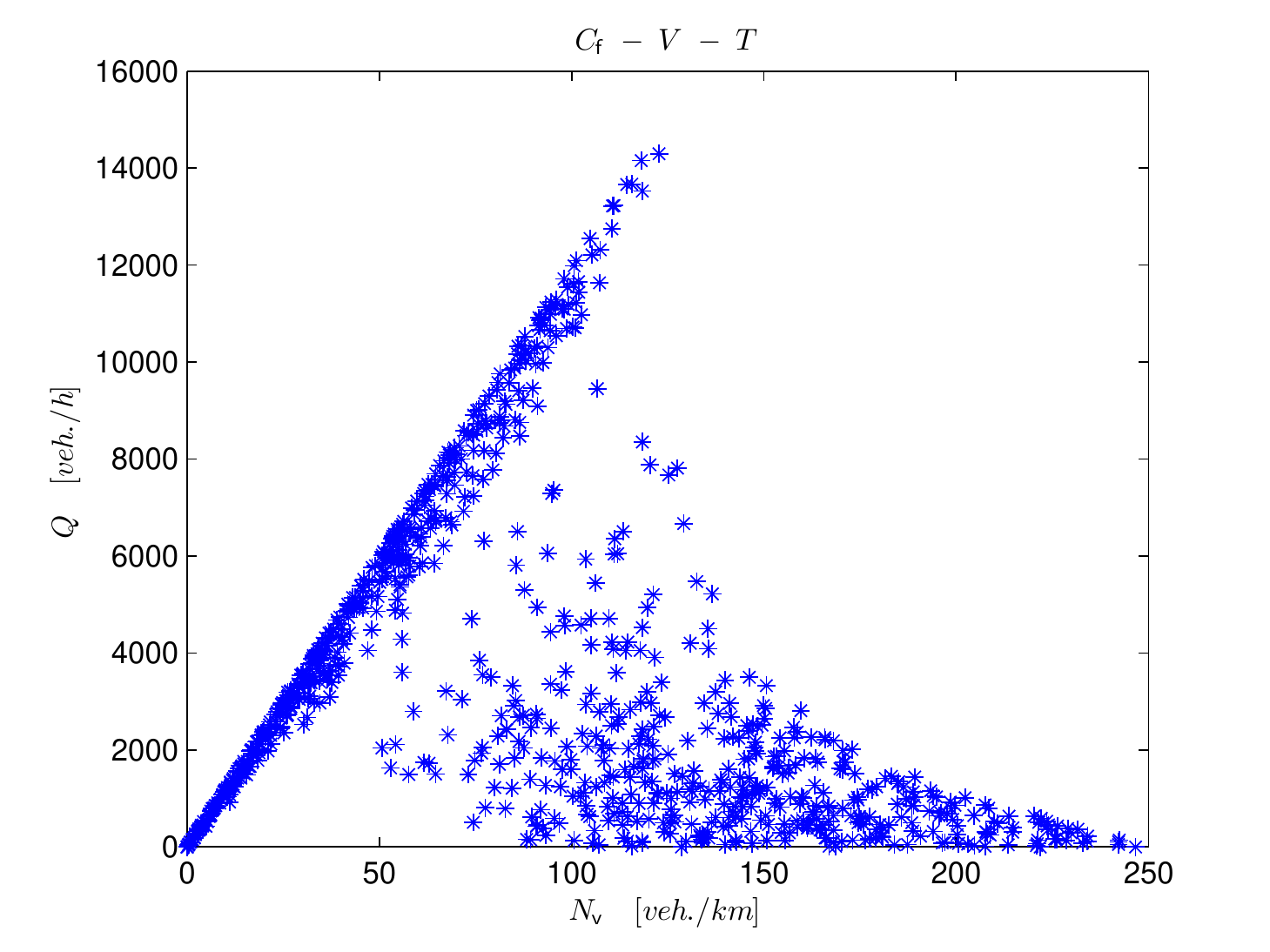}
\includegraphics[width=0.45\textwidth]{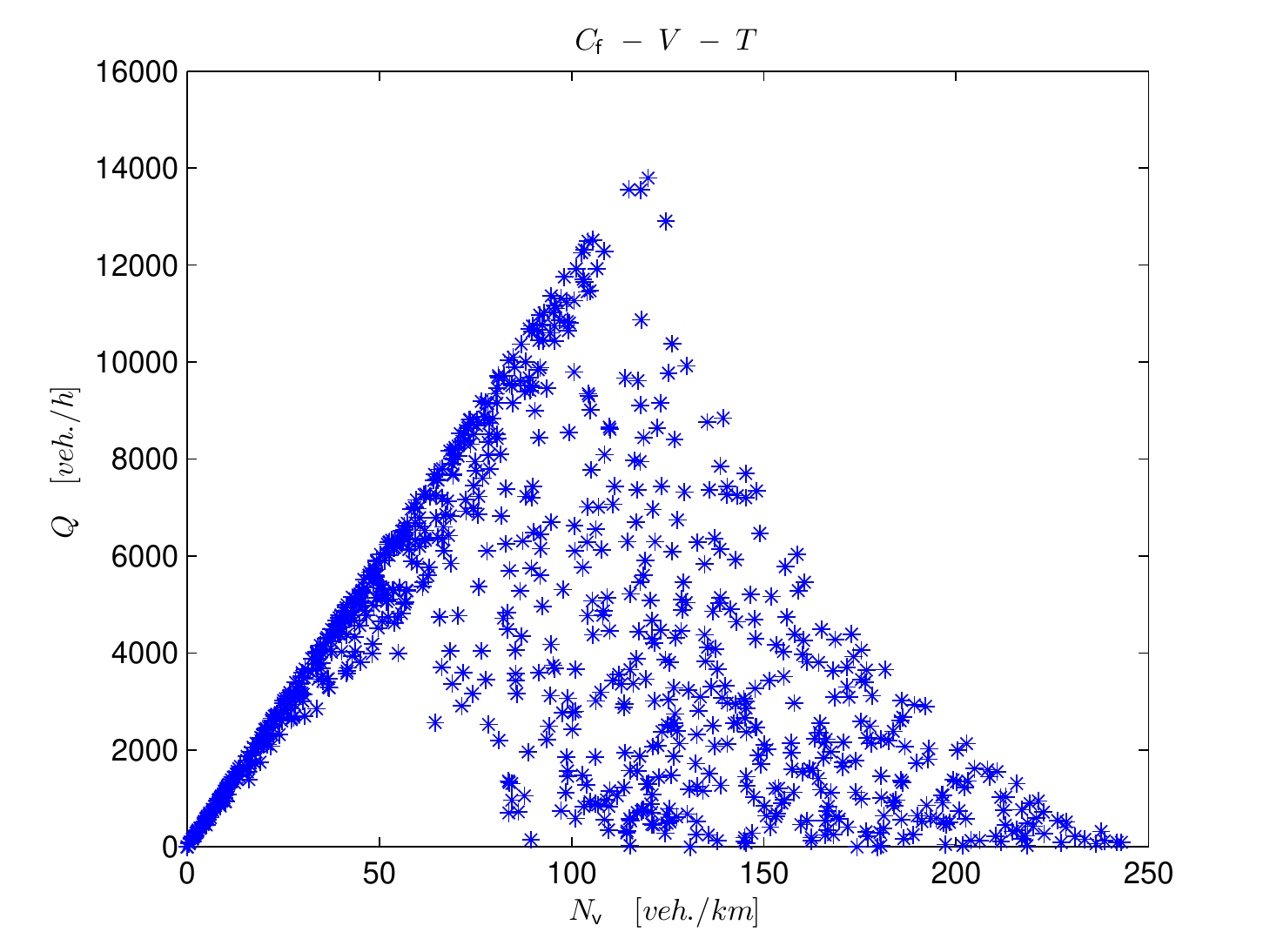}
\includegraphics[width=0.45\textwidth]{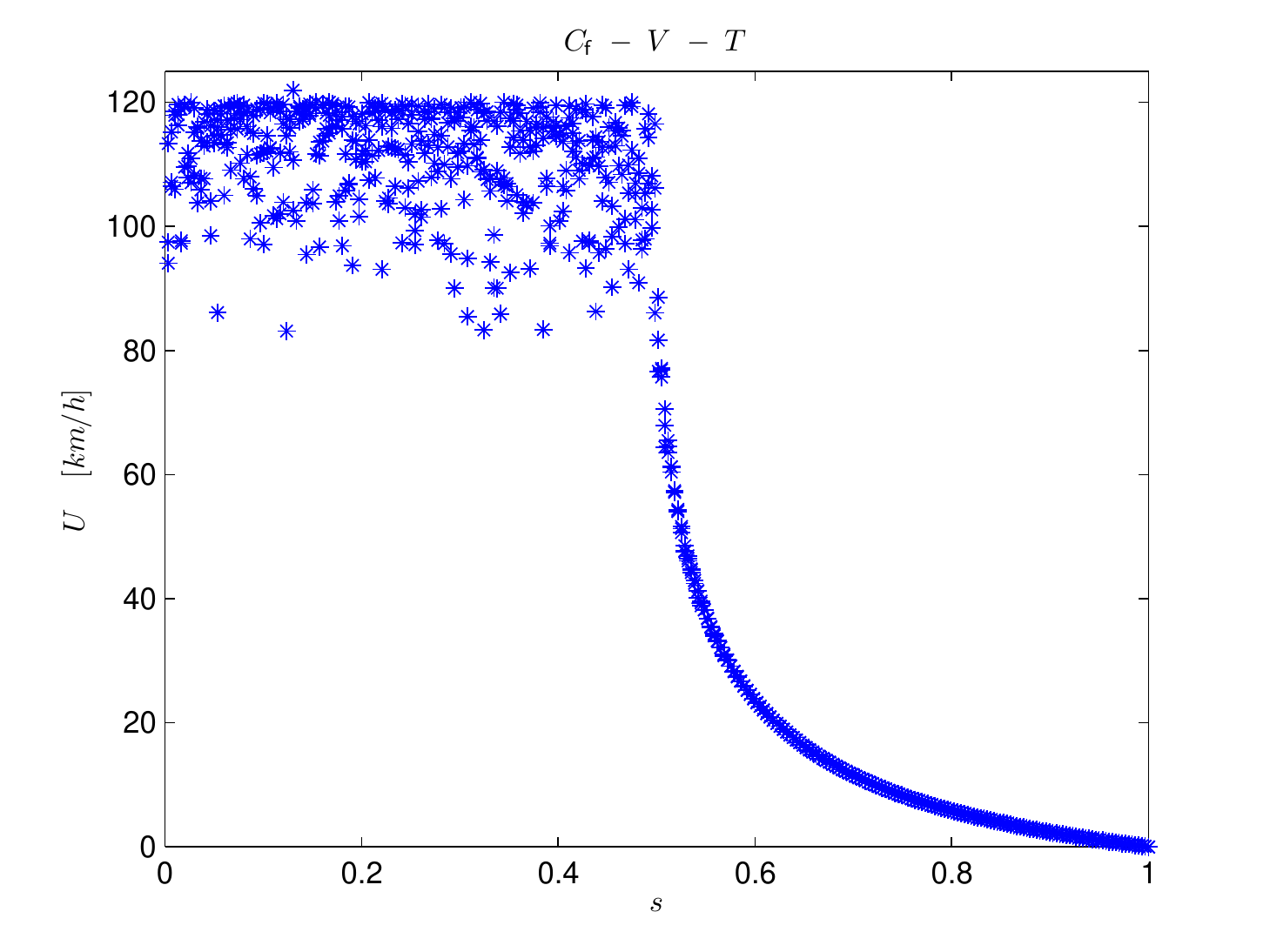}
\includegraphics[width=0.45\textwidth]{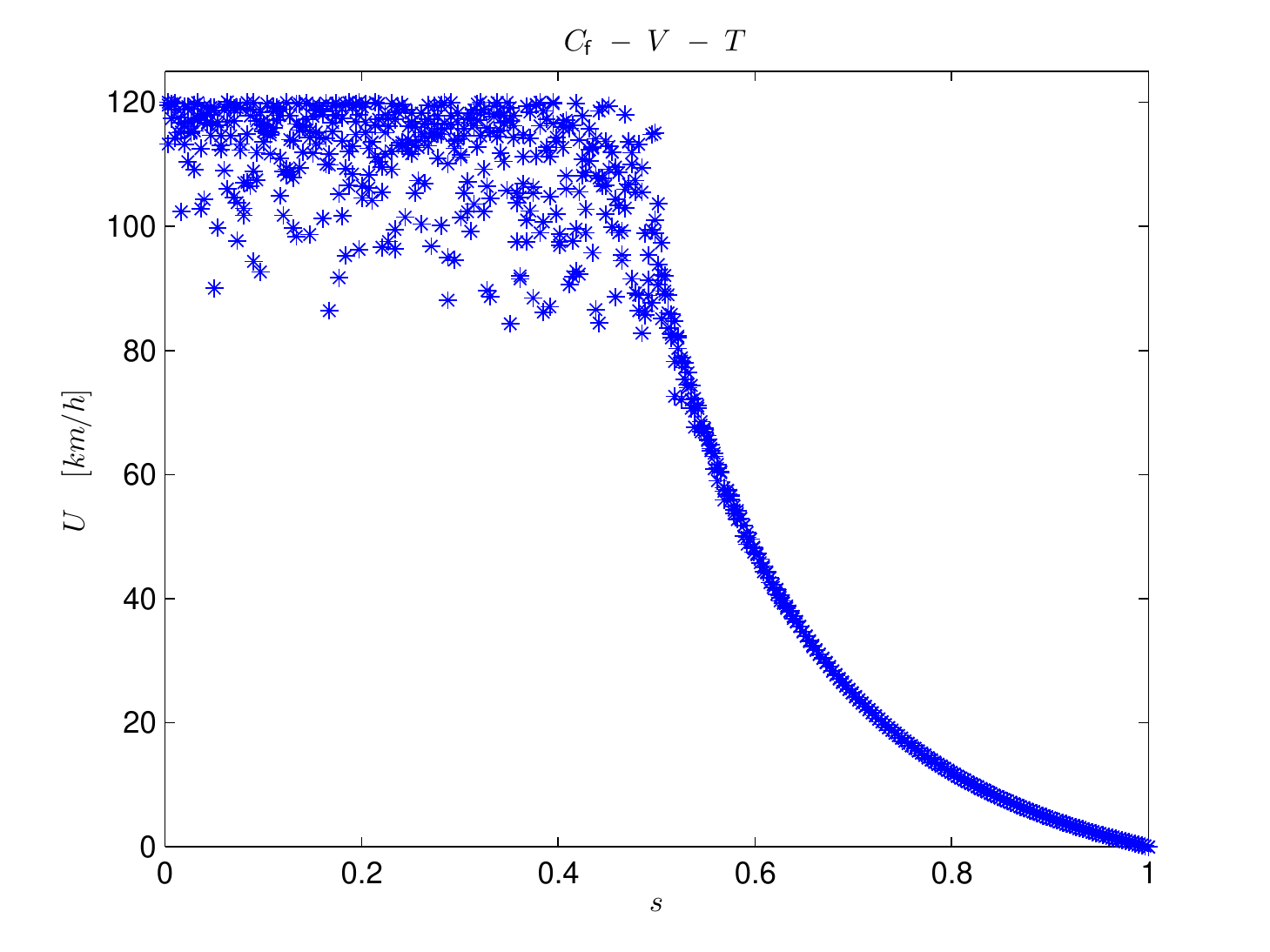}
\caption{Top: flux-density diagrams. Bottom: diagrams of the speed vs. the fraction of occupied space. We have considered the three populations $\Cf$-$V$-$T$. In the left panels the probability of changing velocity $P$ is taken as in~\eqref{eq:gamma-law} with $\gamma=1$, while in the right panels the probability $P$ is as in~\eqref{eq:piecewise-law} with $\scr=\frac12$ and $\mu=-\frac18$.\label{fig:diagrams}}
\end{figure}
The scattered behavior is automatically reproduced by our multi-population kinetic model, see the left panels of Figure~\ref{fig:diagrams} in which we plot the total flux- and speed-density diagrams for the test case $\Cf$-$V$-$T$ with the probability law~\eqref{eq:gamma-law}. Recalling that the diagrams of traffic are obtained by means of the equilibrium distributions, the multi-population model naturally accounts for the dispersion of the flux values in the congested phase because the equilibrium solutions do not only depend on the fraction of occupied space $s$ but also on the single densities of the vehicles. Therefore, the explanation for the multivalued behavior is based on the consideration that the flow along a road is strongly influenced by the composition of the traffic mixture. In particular, this aspect is evident at high densities because the different mechanical characteristics (the typical length) of the vehicles on the road become a key factor to adjust the speed in congested conditions. Conversely, if we focus on the diagrams of the speed vs. the fraction of occupied space at the bottom of Figure~\ref{fig:diagrams}, we deduce that in free flow conditions the macroscopic speed is influenced by the fact that fast vehicles slow down as a consequence of their interactions with slower vehicles. In contrast, at high values of $s$, all types of vehicles are forced to slow down, reaching the same macroscopic speed. These remarks reflect the daily experience of driving on highways, in particular the fact that in congested flow all vehicles tend to travel at the same speed, which steadily decreases as the traffic congestion increases.

\item[Capacity drop] The flux-density diagram in the top-left panel of Figure~\ref{fig:diagrams} is very similar to the experimental ones, whose main characteristics are well reproduced. However, as observed also in the single population model~\cite{PgSmTaVg2}, the diagrams seem to be strictly dependent on $T^\p$, which defines uniquely the number of discrete velocities once $\vmp$ and $\Dv$ are given. In fact, the {\em capacity drop}, that is the jump between the maximum flux values in free and congested phases, see e.g.~\cite{ZhangMultiphase}, becomes sharper and sharper when $T^\p$ increases. For instance, with the choice of the physical parameters listed in Table~\ref{tab:physical-parameters}, the number of velocities of fast cars is $n^{\Cf}=4$ and this provides a sharp decrease of the flux of the $\Cf$-class beyond the critical fraction of occupied space. Clearly, this phenomenon influences also the capacity drop of the total diagram obtained with the multi-population model. We try to overcome this drawback acting on the law which defines the dependence of $P$ on $s$. To this end, we introduce a new law relating $P$ to $s$ in order to better fit experimental data. As a matter of fact, the sharp transition is due to over-crowding of the low-speed equilibrium distributions, also for values of $s$ just greater than the critical value. Thus, if $P_\gamma(s)$ is the {\em $\gamma$-law} given in equation~\eqref{eq:gamma-law}, the purpose is now to introduce a new and less simplistic function $P(s)$ such that $P_\gamma(s)<P(s)<\frac12$, $\forall\;s>\scr$, where $\scr$ is the critical value of $s$ obtained with the $\gamma$-law, i.e. $\scr=\left(\frac12\right)^{\frac{1}{\gamma}}$. Thus, when $s$ just exceeds $\scr$, the new probability $P(s)$ provides an under-crowding of the first equilibrium distributions~\eqref{eq:stable-f1}. As a consequence, the maximum flux value of the congested phase increases and the capacity drop abates. Recalling that $P$ is the probability of achieving the maximum speed prescribed by the interaction rules, the desired function $P(s)\in[0,1]$ should satisfy the following properties:
\begin{enumerate}
\item $P(0)=1$: when the road is empty, the probability of accelerating is maximum;
\item $P(1)=0$: in contrast to the previous request, the probability is zero in jammed traffic situations;
\item $P(\scr)=\frac12$: we impose that the transition from free to congested phase corresponds to the bifurcation of the equilibrium solutions and it occurs at $s=\scr$. The value of the critical space can be chosen by means of experimental data;
\item $\frac{d}{ds} P_{\gamma \mid_{s=\scr^+}}<\frac{d}{ds} P_{\mid_{s=\scr^+}}:=\mu<0$: since any reasonable $P$ should be a decreasing function of $s$, this is a sufficient condition to verify $P_\gamma(s)<P(s)<\frac12$, $\forall\;s>\scr$. Thus, since $\gamma$ is uniquely determined once the value of the critical space is fixed, we require that $\mu>\frac{d}{ds} P_{\gamma \mid_{s=\scr^+}}=-\gamma\scr^{\gamma-1}=-\gamma\left(\frac12\right)^{1-\frac{1}{\gamma}}$.
\end{enumerate}
\begin{figure}[t!]
\centering
\includegraphics[width=0.45\textwidth]{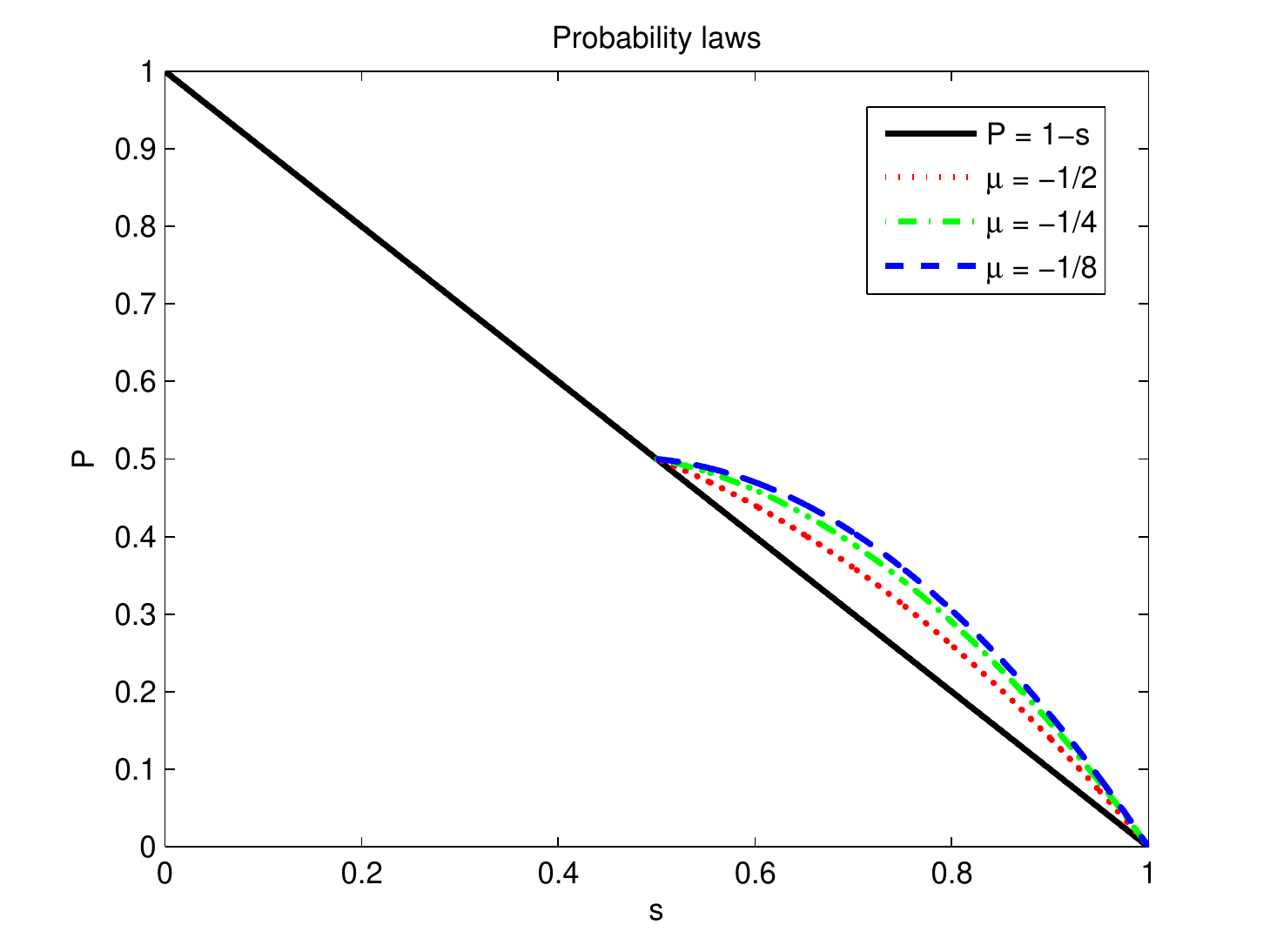}
\includegraphics[width=0.45\textwidth]{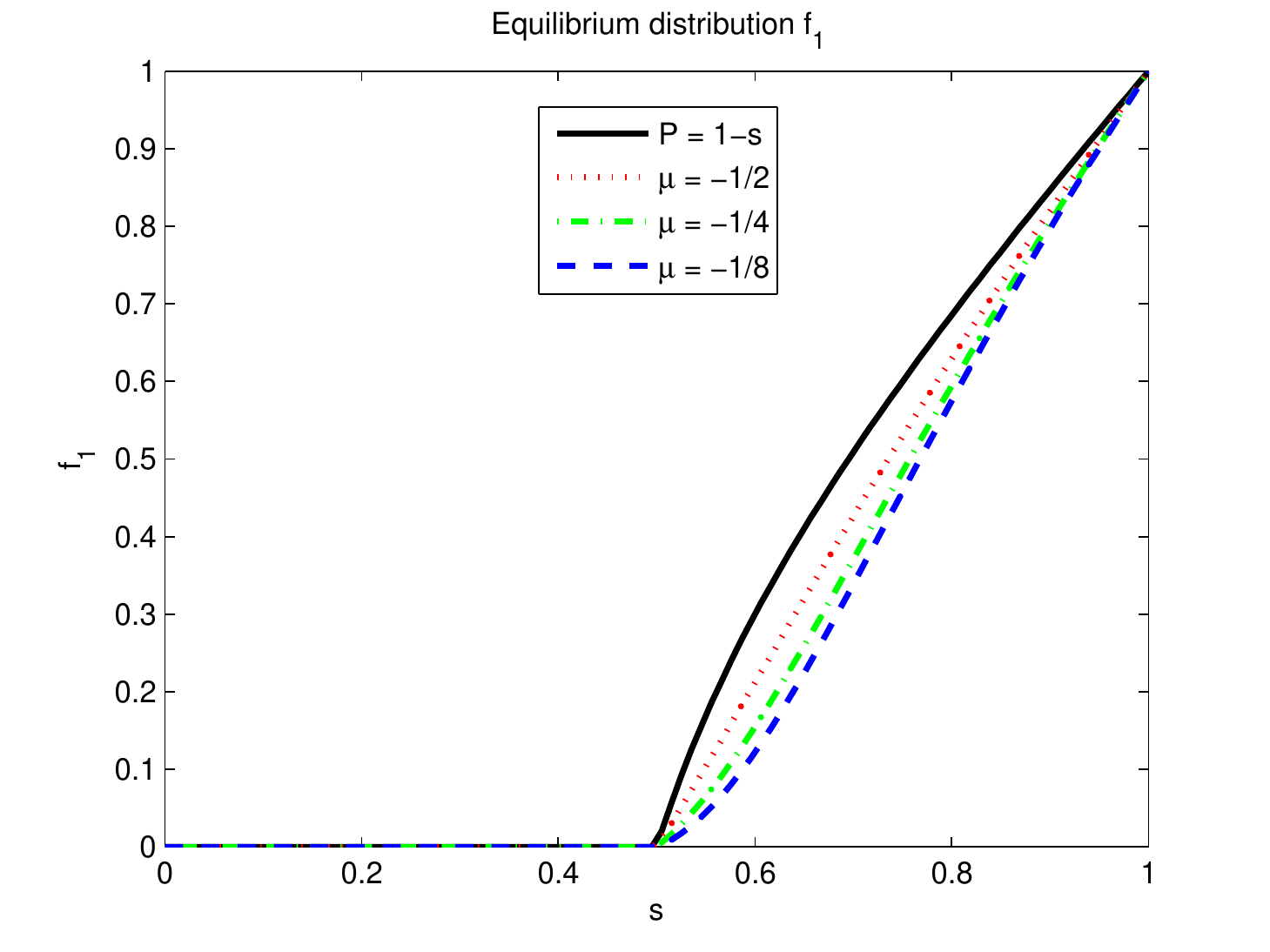}
\caption{Left: the probability law~\eqref{eq:gamma-law} (black solid line) with $\scr=\frac12$, $\gamma=1$, and the probability law~\eqref{eq:piecewise-law}, which differs from the $\gamma$-law only for $s>\scr=\frac12$, obtained with three different values of the slope $\mu$ for $s=\scr$. Right: the asymptotic distribution $f_1$ (lower speed) obtained with the probability laws considered in the left panel.\label{fig:Plaws}}
\end{figure}
In order to satisfy the four properties above and since the equilibrium solutions do not depend on the analytical expression of $P(s)$ (for $s\in[0,\scr]$) we consider $P$ as a piecewise function of $s$, so that
\begin{equation}\label{eq:piecewise-law}
P(s)=P_1(s) \chi_{[0,\scr]}(s) + P_2(s) \chi_{(\scr,1]}(s)
\end{equation}
where $P_1(s)$ is a linear polynomial satisfying the first and the third property, while $P_2(s)$ is a quadratic polynomial satisfying the second, the third and the fourth property. Therefore
\[
	P_1(s)=1-\frac{s}{2\scr}, \quad P_2(s)=as^2+bs+c
\]
the coefficients of $P_2$ being
\[
a=\frac{2\mu(\scr-1)-1}{2(\scr-1)^2}, \quad b=-\frac{\mu(\scr^2-1)-\scr}{(\scr-1)^2}, \quad c=\frac{2\scr\left[\mu(\scr-1)-1\right]+1}{2(\scr-1)^2}.
\]
For simplicity, in the left panel of Figure~\ref{fig:Plaws} we consider the probability laws with $\scr=\frac12$. Thus, $\gamma=1$ and the $\gamma$-law~\eqref{eq:gamma-law} writes as $P=1-s$, while the piecewise probability law~\eqref{eq:piecewise-law} is plotted for different values of the slope $\mu$ computed in $s=\scr=\frac12$ such that $\mu>-1=\frac{d}{ds} P_{\gamma \mid_{s=\scr=\frac12}}$. Notice that the probability values resulting from the piecewise law increase for $\forall\;s>\scr$ when $\mu$ is increased and this provides a little decrease of the first equilibrium distribution. This can be seen it in the right panel of Figure~\ref{fig:Plaws}, in which we plot the asymptotic distribution $f_1$ as a function of $s$ for the case of a single population. Thus, the piecewise law allows one to reduce the sharp capacity drop. For instance, this can be appreciated by comparing the diagrams in the left panels of Figure~\ref{fig:diagrams} obtained with the $\gamma$-law and the diagrams in the right panels of Figure~\ref{fig:diagrams} obtained with the piecewise probability law.
\end{description}

\section{Conclusions and perspectives}
\label{sec:conclusions}

In this paper we have introduced a kinetic model for traffic flow which accounts for the heterogeneous composition of the flow. Thus, we have considered more than one kinetic function describing the distribution of vehicles on the road. The model is based on the Boltzmann-like description of the microscopic interactions which take place among different types of vehicles.

The aim of this work was to refine the construction of the recent multi-population kinetic model~\cite{PgSmTaVg} based on a discrete space of microscopic speeds in order to make it more amenable to a sound physical interpretation and mathematical analysis. In fact, here we have generalized the single-population framework introduced in~\cite{PgSmTaVg2}, to the case of more than one class of vehicles. In particular, we have considered continuous and bounded velocity spaces and we have introduced a fixed parameter $\Dv$ to account for the physical velocity jump produced by a vehicle that increases its speed. The types of vehicles are characterized by few microscopic features, in this case the typical length of a vehicle and the maximum speed. After modeling the collision terms describing the acceleration and the slowing down interactions, we have proved that the model satisfies the indifferentiability principle at all times, which makes it consistent with the original model when the vehicles have the same physical characteristics.

Next, we have discretized the model in order to investigate numerically the asymptotic kinetic distributions. We have shown that they approach a series of delta functions centered on a finite number of velocities. More precisely, these velocities are integer multiples of $\Dv$. Therefore, the asymptotic distributions can be found by solving numerically the discretized model with only {\em few} discrete velocities in the grid. It is worth stressing that the knowledge of the equilibrium distributions is crucial for both the study of average characteristic of traffic, such as the flux- and the speed-density relations, and the derivation of macroscopic equations from the kinetic approach, because the richer closure law provided by the kinetic approach can be used.

We have also studied the analytical properties of the system of ordinary differential equations resulting from the discretization of the continuous-velocity model. We have proved that the solution exists, that it is unique, and that it remains positive in time. In addition to that, we have provided the explicit formulas for the equilibrium distribution functions and the corresponding macroscopic variables of traffic.

\medskip

\section*{Acknowledgment}
This work was partly supported by 
\textquotedblleft National Group for Scientific Computation (GNCS-INDAM)\textquotedblright. Andrea Tosin acknowledges that this work has been written within the activities of the \textquotedblleft National Group for Mathematical Physics (GNFM-INDAM)\textquotedblright.
The authors wish to acknowledge the anonymous Referee for contributing to simplify the proof of Theorem \ref{th:well.posedness}. The authors thank also Prof. Emanuele Casini (University of Insubria) for helpful discussions.

\medskip

\bibliographystyle{plain}
\bibliography{PgSmTaVg-analysis_multipop}

\appendix
\numberwithin{equation}{section}
\section{Matrix elements for the discretized model}
\label{app:discrete-terms}

We compute explicitly the right-hand side of the system~\eqref{eq:discretesys}.
In the following formulas, the kinetic distribution functions of  candidate and field vehicles are distinguished by the position of the index of the components: bottom-right for candidate vehicles (such as e.g. $f_h^\p$), top-right for field vehicles (such as e.g. $f^{\p k}$ or $f^{\q k}$).

Starting from the self-collision term $\Qpp_j$ and recalling the computations in~\cite{PgSmTaVg2} for the single population model, we obtain:
\begin{subequations} \label{eq:Jpp:allj}
\begin{align}\label{eq:Jppsmallj}
	\frac{1}{\eta^\p}\Qpp_j[\fp,\fp](t)&=
	(1-\frac{P}{2})f^{\p j}f_j^\p+
	Pf_j^\p\sum_{k=1}^{j-1}f^{\p k}
	+(1-P)f_j^\p\sum_{k=j+1}^{\np}f^{\p k}\\
    &+(1-P)f^{\p j}	\sum_{h=j+1}^{\np}f_h^\p-f_j^\p\sum_{k=1}^{\np}f^{\p k},
    \quad
    \text{for $j=1,\dots,r$} \nonumber
\\ \label{eq:Jppallj}
	\frac{1}{\eta^\p}\Qpp_j[\fp,\fp](t)&=
	\frac{P}{2}f^{\p j-r}f_{j-r}^\p
	+(1-\frac{P}{2})f^{\p j} f_j^\p
	+Pf_{j-r}^\p\sum_{k=j-r+1}^{\np}f^{\p k}
	+Pf_j^\p\sum_{k=1}^{j-1}f^{\p k}\\
	&+(1-P)f_j^\p\sum_{k=j+1}^{\np}f^{\p k}
	+(1-P)f^{\p j}\sum_{h=j+1}^{\np}f_h^\p-f_j^\p\sum_{k=1}^{\np}f^{\p k},\quad \text{for $j=r+1,\dots,\np-1$}\nonumber
\\ \label{eq:Jpplastj}
	\frac{1}{\eta^\p}\Qpp_{\np}[\fp,\fp](t)&=
	P \sum_{h=\np-r}^{\np-1} f_h^\p \left[ \frac12 f^{\p h} + \sum_{k=h+1}^{\np}f^{\p k}\right]
	+f^{\p \np}f_{\np}^\p
	+Pf_{\np}^\p\sum_{k=1}^{\np-1}f^{\p k}-f_{\np}^\p\sum_{k=1}^{\np}f^{\p k}.
\end{align}
\end{subequations}

For the cross-collision terms, we distinguish the two cases described in~\eqref{eq:cross-gain1} and~\eqref{eq:cross-gain2}. Therefore, let $\Vp\subset\Vq$, we obtain:
\begin{subequations} \label{eq:Jpq:allj2}
\begin{align}\label{eq:Jpqsmallj2}
	\frac{1}{\eta^{\p\q}}\Qpq_j[\fp,\fq](t)&=
	(1-\frac{P}{2})f^{\q j}f_j^\p+
	P f_j^\p \sum_{k=1}^{j-1}f^{\q k}
	+(1-P)f_j^\p\sum_{k=j+1}^{\nq}f^{\q k}\\
    &+(1-P)f^{\q j}	\sum_{h=j+1}^{\np}f_h^\p-f_j^\p\sum_{k=1}^{\nq}f^{\q k},
    \quad
    \text{for $j=1,\dots,r$} \nonumber
\\ \label{eq:Jpqallj2}
	\frac{1}{\eta^{\p\q}}\Qpq_j[\fp,\fq](t)&=
	\frac{P}{2}f^{\q j-r}f_{j-r}^\p
	+(1-\frac{P}{2})f^{\q j} f_j^\p
	+Pf_{j-r}^\p\sum_{k=j-r+1}^{\nq}f^{\q k}
	+Pf_j^\p\sum_{k=1}^{j-1}f^{\q k}\\
	&+(1-P)f_j^\p\sum_{k=j+1}^{\nq}f^{\q k}
	+(1-P)f^{\q j}\sum_{h=j+1}^{\np}f_h^\p-f_j^\p\sum_{k=1}^{\nq}f^{\q k}, \quad \text{for $j=r+1,\dots,\np-1$}\nonumber
\\ \label{eq:Jpqlastj2}
	\frac{1}{\eta^{\p\q}}\Qpq_{\np}[\fp,\fq](t)&=
	P \sum_{h=\np-r}^{\np-1} f_h^\p \left[ \frac12 f^{\q h} + \sum_{k=h+1}^{\nq}f^{\q k}\right]
	+f^{\p}_{\np}\left[ f^{\q \np} + \sum_{k=\np+1}^{\nq} f^{\q k} \right]\\
	&+Pf_{\np}^\p\sum_{k=1}^{\np-1}f^{\q k}-f_{\np}^\p\sum_{k=1}^{\nq}f^{\q k}.\nonumber
\end{align}
\end{subequations}

Finally, let $\Vp\supset\Vq$. If $\vmp-\vmq>\Dv$, we obtain:
\begin{subequations} \label{eq:Jpq:allj1}
\begin{align}\label{eq:Jpqsmallj1}
	\frac{1}{\eta^{\p\q}}\Qpq_j[\fp,\fq](t)&=
	(1-\frac{P}{2})f^{\q j}f_j^\p+
	P f_j^\p \sum_{k=1}^{j-1}f^{\q k}
	+(1-P)f_j^\p\sum_{k=j+1}^{\nq}f^{\q k}\\
    &+(1-P)f^{\q j}	\sum_{h=j+1}^{\np}f_h^\p-f_j^\p\sum_{k=1}^{\nq}f^{\q k},
    \quad
    \text{for $j=1,\dots,r$} \nonumber
\\ \label{eq:Jpqmidsmallj1}
	\frac{1}{\eta^{\p\q}}\Qpq_j[\fp,\fq](t)&=
	\frac{P}{2}f^{\q j-r}f_{j-r}^\p
	+(1-\frac{P}{2})f^{\q j} f_j^\p
	+Pf_{j-r}^\p\sum_{k=j-r+1}^{\nq}f^{\q k}
	+Pf_j^\p\sum_{k=1}^{j-1}f^{\q k}\\
	&+(1-P)f_j^\p\sum_{k=j+1}^{\nq}f^{\q k}
	+(1-P)f^{\q j}\sum_{h=j+1}^{\np}f_h^\p-f_j^\p\sum_{k=1}^{\nq}f^{\q k}, \quad \text{for $j=r+1,\dots,\nq-1$}\nonumber
\\ \label{eq:Jpqnq1}
	\frac{1}{\eta^{\p\q}}\Qpq_{\nq}[\fp,\fq](t)&=
	\frac{P}{2}f^{\q \nq-r}f_{\nq-r}^{\p}
	+(1-\frac{P}{4})f^{\q \nq} f_{\nq}^{\p}
	+Pf_{\nq-r}^{\p}\sum_{k=\nq-r+1}^{\nq}f^{\q k}
	+Pf_{\nq}^{\p}\sum_{k=1}^{\nq-1}f^{\q k}\\
	&+(1-P)f^{\q \nq}\sum_{h=\nq+1}^{\np}f_h^{\p}-f_{\nq}^{\p}\sum_{k=1}^{\nq}f^{\q k},\nonumber
\\ \label{eq:Jpqmidlastj1}
	\frac{1}{\eta^{\p\q}}\Qpq_j[\fp,\fq](t)&=
	\frac{P}{2}f^{\q j-r}f_{j-r}^{\p}
	+Pf_{j-r}^{\p}\sum_{k=j-r+1}^{\nq}f^{\q k}
	+Pf_j^{\p}\sum_{k=1}^{\nq}f^{\q k}-f_j^{\p}\sum_{k=1}^{\nq}f^{\q k}, \quad \text{for $j=\nq+1,\dots,\nq+r-1$}
\\ \label{eq:Jpqnqr1}
	\frac{1}{\eta^{\p\q}}\Qpq_{\nq+r}[\fp,\fq](t)&=
	\frac{P}{4}f^{\q \nq}f_{\nq}^{\p}
	+Pf_{\nq+r}^{\p}\sum_{k=1}^{\nq}f^{\q k}-f_{\nq+r}^{\p}\sum_{k=1}^{\nq}f^{\q k},
\\ \label{eq:Jpqlastj1}
	\frac{1}{\eta^{\p\q}}\Qpq_j[\fp,\fq](t)&=
	Pf_j^\p\sum_{k=1}^{\nq}f^{\q k}-f_j^\p\sum_{k=1}^{\nq}f^{\q k},  \quad \text{for $j=\nq+r+1,\dots,\np$}.
\end{align}
\end{subequations}
On the other hand, if $\vmp-\vmq=\Dv$ then $\nq+r=\np$ and the cases~\eqref{eq:Jpqlastj1} do not appear.

\section{Equilibrium solutions of the discretized model}
\label{app:equilibria}

The equilibrium values of the ODE system~\eqref{eq:explicit-discretesys} are explicitly computed in Theorem~\ref{th:equilibria} for the $j=1,\dots,2r$ cases, which are typical. The other solutions can be easily obtained as follows.

For $j=lr+1$, $l=2,\dots,T^{2}-1$, the equation for $\fp_j$ is again computed by using~\eqref{eq:Jppallj}-\eqref{eq:Jpqmidsmallj1} if $\p=1$ and in~\eqref{eq:Jppallj}-\eqref{eq:Jpqallj2} if $\p=2$.
Let $F_j=\fu_j+\fd_j$, then summing the evolution equation of both populations we obtain
\begin{gather*}
	\left(\frac{3P-2}{2}\right) F_{lr+1}^2 + \left[ (3P-2)\sum_{k=0}^{l-1} F_{kr+1} + (1-2P) \left(\ru+\rd\right)\right] F_{lr+1} \\
	+ P F_{(l-1)r+1} \left[-\frac12 F_{(l-1)r+1}+\rho^\p+\rho^\q-\sum_{k=0}^{l-2} F_{kr+1}\right]=0.
\end{gather*}
If $P\geq\frac12$, then the above equation becomes identical to~\eqref{eq:f1} and the stable root is $F_{lr+1}=0$. If instead $P<\frac12$, then the stable solution is
\[
	F_{lr+1}=\frac{\ds{-(3P-2)\sum_{k=0}^{l-1} F_{kr+1} - (1-2P) \left(\ru+\rd\right) - \sqrt{\Delta_{lr+1}}}}{3P-2}
\]
where
\begin{gather*}
	\Delta_{lr+1}=\left[(3P-2)\sum_{k=0}^{l-1} F_{kr+1} + (1-2P) \left(\ru+\rd\right)\right]^2\\
	-2P(3P-2) F_{(l-1)r+1}\left[-\frac12 F_{(l-1)r+1} + \ru+\rd-\sum_{k=0}^{l-2} F_{kr+1}\right],
\end{gather*}
which is positive because it is a sum of two positive terms, provided $P<\frac12$. The equilibrium solutions of population $\p$ can be again deduced by assuming that they are functions only of the quantities depending on $\p$, so that
\[
	\left(\f_r\right)^\p_{lr+1}=\begin{cases} 0 & P\geq\frac12\\
					\ds{\frac{\ds{-(3P-2)\sum_{k=0}^{l-1} \fp_{kr+1} - (1-2P) \rho^\p - \sqrt{\Delta_{lr+1}^\p}}}{3P-2}} & P<\frac12 \end{cases}, \quad l=0,\dots,T^{2}-1, \forall\;\p
\]
where
\[
	\Delta_{lr+1}^\p=\left[(3P-2)\sum_{k=0}^{l-1} \fp_{kr+1} + (1-2P) \rho^\p\right]^2
	-2P(3P-2)\fp_{(l-1)r+1}\left[-\frac12 \fp_{(l-1)r+1}+\rho^\p-\sum_{k=0}^{l-2}\fp_{kr+1}\right].
\]
Finally, if $lr+2 \leq j \leq (l+1)r$, $l=2,\dots,T^{2}-1$, then the equilibria are $\left(\f_r\right)^\p_j \equiv 0$, $\forall\;P\in[0,1]$ and $\forall\;\p$.

Note that for $j=\nd$, i.e. for the maximum speed of population $\p=2$, the equilibrium value related to the population $\p=2$ can be found by using mass conservation, so that
\begin{equation}\label{eq:stable-f2n2}
	\left(\f_r\right)^{2}_{\nd} = \rd - \sum_{k=1}^{\nd-1} \fd_k.
\end{equation}
Instead, for the population $\p=1$, the equilibrium equation is obtained by means of~\eqref{eq:Jppallj}-\eqref{eq:Jpqnq1}. If $P<\frac12$,  then $\fp_j\neq 0$, if $j=kr+1$, $k=0,\dots,T^{2}-1$ and $\forall\;\p$. Thus, using the last equilibrium value~\eqref{eq:stable-f2n2} for $\p=2$, we find
\begin{gather*}
	\left(\frac{3P-2}{2}\right)\left(\fu_{\nd}\right)^2+\left[(3P-2)\sum_{k=0}^{T^{2}-1} \fu_{kr+1} + \frac{P}{4} \sum_{k=0}^{T^{2}-1} \fd_{kr+1} + (1-2P)\ru +\left(\frac34 P - 1\right)\rd\right] \fu_{\nd} \\
	+(1-P)\left(\ru-\sum_{k=0}^{T^{2}-1} \fu_{kr+1}\right)\left(\rd-\sum_{k=0}^{T^{2}-1} \fd_{kr+1}\right)\\
	+ P\fu_{\nd-r}\left[-\frac12\left(\fu_{\nd-r}+\fd_{\nd-r}\right)+\ru+\rd-\sum_{k=0}^{T^{2}-2} \left(\fu_{kr+1}+\fd_{kr+1}\right)\right] 
\end{gather*}
whose stable root is
\begin{equation}\label{eq:stable-f1n2}
	\left(\f_r\right)^{1}_{\nd}=\frac{\ds{-\left[(3P-2)\sum_{k=0}^{T^{2}-1} \fu_{kr+1} + \frac{P}{4} \sum_{k=0}^{T^{2}-1} \fd_{kr+1} + (1-2P)\ru +\left(\frac34 P - 1\right)\rd\right]-\sqrt{\Delta_{\nd}}}}{3P-2}
\end{equation}
where the discriminant
\begin{gather}
	\Delta_{\nd}=\left[(3P-2)\sum_{k=0}^{T^{2}-1} \fu_{kr+1} + \frac{P}{4} \sum_{k=0}^{T^{2}-1} \fd_{kr+1} + (1-2P)\ru +\left(\frac34 P - 1\right)\rd\right]^2 \nonumber \\
	-2(3P-2)(1-P)\left(\ru-\sum_{k=0}^{T^{2}-1} \fu_{kr+1}\right)\left(\rd-\sum_{k=0}^{T^{2}-1} \fd_{kr+1}\right) \label{eq:deltan2} \\
		-2(3P-2) P\fu_{\nd-r}\left[-\frac12\left(\fu_{\nd-r}+\fd_{\nd-r}\right)+\ru+\rd-\sum_{k=0}^{T^{2}-2} \left(\fu_{kr+1}+\fd_{kr+1}\right)\right] \nonumber
\end{gather}
is positive provided $P<\frac12$. If instead $P\geq\frac12$, then the equilibrium value is again of the form~\eqref{eq:stable-f1n2} with the discriminant~\eqref{eq:deltan2}, but it is obtained by taking $\fp_j=0$, $\forall\;j<\nd$, $\forall\;\p$. It can be proved that $\Delta_{\nd}$ is positive also provided $P\geq \frac12$.

Let $j=\nd+1$, now $v_j\in\mathcal{V}^{1}$ but $v_j\notin\mathcal{V}^{2}$. The equilibrium equation for $\p=1$ is computed by using~\eqref{eq:Jppallj}-\eqref{eq:Jpqmidlastj1}. We obtain
\begin{equation}\label{eq:fn2+1}
	\left(\frac{3P-2}{2}\right)\left(\fu_{\nd+1}\right)^2+\left[(3P-2)\sum_{l=0}^{T^{2}} \fu_{lr+1} + (1-2P)\ru + (P-1)\rd\right] \fu_{\nd+1} = 0
\end{equation}
and $\fu_{\nd+1}= 0$ results the stable solution $\forall\;P\in[0,1]$. This consideration holds for each $\fu_j$, $\nd+1 < j \leq \nd+r-1$ because the equation resulting from $\frac{d}{dt} \fu_j=0$ is identical to~\eqref{eq:fn2+1}. Thus, $\left(\f_r\right)^{1}_j\equiv 0$, $\nd+1 \leq j \leq \nd+r-1$.

Now, let $j=\nd+r$, then in order to compute $\frac{d}{dt}\fu_{\nd+r}=0$ we use the equations~\eqref{eq:Jpplastj}-\eqref{eq:Jpqnqr1} if $\vm^{1}-\vm^{2}=m^{1,2}\Dv$, with $m^{1,2}=1$. Notice that in this case $n^{2}+r=n^{1}$, thus we can use mass conservation to find $\left(\f_r\right)^{1}_{n^{2}+r}$. Instead, if $m^{1,2}>1$ we may use~\eqref{eq:Jppallj}-\eqref{eq:Jpqnqr1}, and since $\fu_j\equiv 0$, if $j\neq kr+1$, $k=0,\dots,T^{2}$, we find
\begin{gather}\label{eq:f1n2+r}
	\left(\frac{3P-2}{2}\right)\left(\fu_{\nd+r}\right)^2 + \left[ (3P-2)\sum_{k=0}^{T^{2}} \fu_{kr+1} + (1-2P)\ru + (P-1)\rd\right] \fu_{\nd+r} \\
	P \fu_{\nd} \left[-\frac12 \fu_{\nd} + \frac14 \fd_{\nd} + \ru - \sum_{k=0}^{T^{2}-1} \fu_{kr+1}\right]=0. \nonumber
\end{gather}
If $P<\frac12$ the stable solution of the above equation is
\begin{equation}\label{eq:stable-f1n2+r}
	\left(\f_r\right)^{1}_{\nd+r}=\frac{\ds{-\left[ (3P-2)\sum_{k=0}^{T^{2}} \fu_{kr+1} + (1-2P)\ru + (P-1)\rd\right]-\sqrt{\Delta_{\nd+r}}}}{3P-2}
\end{equation}
where the discriminant
\begin{gather}\label{eq:deltan2+r}
	\Delta_{\nd+r}=\left[ (3P-2)\sum_{k=0}^{T^{2}} \fu_{kr+1} + (1-2P)\ru + (P-1)\rd\right]^2\\
	-2P(3P-2) \fu_{\nd} \left[-\frac12 \fu_{\nd} + \frac14 \fd_{\nd} + \ru - \sum_{k=0}^{T^{2}-1} \fu_{kr+1}\right] \nonumber
\end{gather}
is positive provided $P<\frac12$. If instead $P\geq\frac12$ then the stable root of~\eqref{eq:f1n2+r} is again of the form~\eqref{eq:stable-f1n2+r} with the discriminant~\eqref{eq:deltan2+r}, but it is obtained by taking $\fu_j=0$, $\forall\;j\ne\nd$, and it can be proved that $\Delta_{\nd+r}$ is positive provided $P\geq \frac12$.

Let $\nd+r+1\leq j \leq n^{1}-1$, then the equilibrium equation resulting from $\frac{d}{dt}\fu_j=0$ is computed by using~\eqref{eq:Jppallj}-~\eqref{eq:Jpqlastj1}. For any $j=\nd+lr$, $l=2,\dots,m^{1,2}-1$, where $m^{1,2}$ is such that $\vm^{1}-\vm^{2}=m^{1,2}\Dv$, we obtain
\begin{gather*}
	\left(\frac{3P-2}{2}\right)\left(\fu_j\right)^2 + \left[(3P-2)\sum_{k=0}^{T^{2}+l-1} \fu_{kr+1} + (1-2P)\ru + (P-1)\rd\right] \fu_j\\
	+P\fu_{j-r}\left[-\frac12 \fu_{j-r} + \ru - \sum_{k=0}^{T^{2}+l-2} \fu_{kr+1} \right]=0
\end{gather*}
whose stable solution is
\[
	\left(\f_r\right)^{1}_j=\frac{\ds{-\left[(3P-2)\sum_{k=0}^{T^{2}+l-1} \fu_{kr+1} + (1-2P)\ru + (P-1)\rd\right]-\sqrt{\Delta_j}}}{3P-2}
\]
for all values of $P\in[0,1]$ and where the discriminant
\begin{gather*}
	\Delta_j=\left[(3P-2)\sum_{k=0}^{T^{2}+l-1} \fu_{kr+1} + (1-2P)\ru + (P-1)\rd\right]^2\\
	-2P(3P-2)\fu_{j-r}\left[-\frac12 \fu_{j-r} + \ru - \sum_{k=0}^{T^{2}+l-2} \fu_{kr+1} \right]
\end{gather*}
is positive $\forall\;P\in[0,1]$. While, if $\nd+lr+1 \leq j \leq \nd+(l+1)r-1$, $l=2,\dots,m^{1,2}-1$, the equilibrium equation for $\fu_j$ is
\[
	\left(\frac{3P-2}{2}\right)\left(\fu_j\right)^2 + \left[(3P-2)\sum_{k=0}^{T^{2}+l-1} \fu_{kr+1} + (1-2P)\ru + (P-1)\rd\right] \fu_j=0
\]
whose stable solution results $\left(\f_r\right)^{1}_j=0$, $\forall\;P\in[0,1]$.

Finally, the last equilibrium value for the population $\p=1$ can be found by mass conservation, so that
\[
	\fu_{n^{1}}=\ru-\sum_{k=1}^{n^{1}-1} \fu_k.
\]

\end{document}